\newcommand{\xhdr}[1]{\vspace{1mm} \noindent{\bf #1}}
\newcommand{\mcsdf}{\texttt{MC-SF}}
\newcommand{\sortF}{\texttt{Sorted-F}}
\newcommand{\sortLP}{\texttt{Sorted-LP}}
\newcommand{\opt}{\texttt{Optimal}}
\newcommand{\LPSwap}{\texttt{LP-Swap}}
\begin{document}

\ARTICLEAUTHORS{
\AUTHOR{Meixuan Wang}\AFF{Department of Computer Science and Technology, Tsinghua University,
\EMAIL{wangmx22@mails.tsinghua.edu.cn}}
\AUTHOR{Yinyu Ye}
\AFF{Department of Management Science and Engineering, Stanford University \&  Department of Industrial Engineering and Decision Analytics, HKUST, \EMAIL{yyye@stanford.edu}}
\AUTHOR{Zijie Zhou}\AFF{Department of Industrial Engineering and Decision Analytics, HKUST,
\EMAIL{jerryzhou@ust.hk}}
}

\RUNAUTHOR{}
\RUNTITLE{}
\TITLE{LLM Serving Optimization with Variable Prefill and Decode Lengths}
\ABSTRACT{

\noindent We study offline scheduling for large language model (LLM) serving under a fixed KV-cache memory budget, where requests have heterogeneous prompt (prefill) and response (decode) lengths. Prompt tokens determine initial KV-cache usage, while each generated token further increases memory consumption, creating dynamic memory constraints during autoregressive decoding. Given a backlog of n requests arriving together, the goal is to form mixed prefill and decode batches over time to minimize total end-to-end latency. We show that heterogeneous prompt lengths fundamentally change the scheduling problem: the problem is NP-hard, and standard policies such as first-come-first-served, shortest-output-first, and total-size-based prioritization can have unbounded approximation ratios. We propose Sorted-F, a scheduling algorithm that repeatedly forms feasible batches using an F-metric that balances batch size against downstream decode cost. We prove that Sorted-F achieves a constant-factor approximation guarantee in the offline/backlogged model. We also develop practical implementations, including an exact dynamic program for small instances and scalable local-search and greedy heuristics for larger instances, as well as LP-guided and receding-horizon variants. Experiments on public workloads that combine short conversations and long-document summarization show that F-metric-based scheduling consistently reduces latency relative to standard baselines and remains close to the LP relaxation lower bound for tractable instances.
}
\maketitle

\section{Introduction} \label{sec:intro}

Modern large-scale language models \citep{brown2020language,openai2023gpt} have revolutionized artificial intelligence by demonstrating unprecedented capabilities in natural language generation across diverse linguistic domains and situational contexts. These sophisticated neural networks, trained on extensive corpora of textual data, now serve as foundational components for numerous real-world applications. Their deployment spans conversational agents \citep{anthropic2023claude, openai2023gpt}, information retrieval and AI-assistant systems \citep{microsoftcopilot2026}, programming aids \citep{githubcopilot2026}, and even clinical decision support tools \citep{cascella2023evaluating}, demonstrating remarkable versatility in professional and consumer domains alike.

Efficient inference represents a fundamental challenge in large language model deployment, focusing on how models process input prompts and generate responses with minimal latency. The inference pipeline consists of two distinct phases. (1) \textbf{Prefill:} processing the input prompt (a user-submitted text sequence), and (2) \textbf{Decode:} autoregressively generating output tokens (where each token typically corresponds to a word or sub-word unit). Modern LLMs universally employ the Transformer architecture \citep{vaswani2017attention} for this process. For instance, when processing the prompt ``Why is the sea blue?", the model first tokenizes it into discrete units (``Why", ``is", ``the", ``sea", ``blue", ``?"), then sequentially generates output tokens (e.g., beginning with ``Because") while considering both the prompt and previously generated tokens at each step.
The core computational challenge emerges in multi-request scenarios, where the system must optimally schedule numerous concurrent inference tasks. This scheduling problem requires careful balancing of computational resources to minimize user-perceived latency, particularly given the autoregressive nature of token generation and the variable length of both input prompts and output responses.

The scheduling problem in LLM inference presents unique challenges that distinguish it from traditional scheduling tasks, primarily due to its dynamic memory constraints. During inference, each computational worker maintains a fixed Key-Value (KV) cache memory capacity, where keys and values represent contextual embeddings derived from both input tokens and autoregressively generated output tokens. The memory capacity depends on the hardware of each worker and the complexity of the LLM used. This memory system exhibits two critical behaviors: (1) it must retain all processed tokens from the prefill input prompt, and (2) its consumption grows linearly during decode generation as each newly generated output token requires additional KV cache allocation. The memory growth pattern creates complex scheduling dynamics when combined with parallel processing requirements. Unlike traditional systems where task resource demands remain static, LLM inference exhibits variable memory pressure - the number of concurrent requests a worker can handle fluctuates dynamically based on each request's generation progress. This nonlinear relationship between sequence length and memory consumption fundamentally transforms the scheduling task. The challenge is further compounded by the need to balance immediate memory availability against anticipated future demands as generation progresses across multiple concurrent requests.

\cite{jaillet2025online} established a theoretical model for LLM inference scheduling on a single computational worker, proposing a shortest-first scheduling policy that prioritizes requests with fewer output tokens. Their approach dynamically batches requests while respecting memory constraints, demonstrating both theoretical and numerical effectiveness under the condition of uniform input sizes. However, this condition may not always hold in practice, where workers typically handle mixed workloads (e.g., combining short conversational prompts with lengthy document summarization tasks).

Relaxing this condition fundamentally changes the scheduling problem. With variable prefill input sizes, the relationship between processing time, memory usage, and optimal scheduling becomes significantly more complex. For instance, a request with a large input but small output may consume substantial memory yet finish quickly, while one with a small input but long output may occupy a small memory for an extended period. This trade-off makes prioritization non-trivial, as the previously effective shortest-first heuristic becomes inadequate. In Section \ref{subsec:sf}, we formally analyze these challenges and demonstrate that the generalized scheduling problem is NP-hard and that existing algorithms perform poorly without the uniform input size assumption. Some other related work can be found in Appendix \ref{sec:other}.

\subsection{Main Contribution} \label{subsec:main}

In this section, we introduce the main contributions and the roadmap of this paper.

\xhdr{Negative Results for Existing Scheduling Algorithms and NP-hardness. } In Section \ref{sec:model}, we first establish fundamental limitations of current methods by analyzing the model in \cite{jaillet2025online} under realistic conditions. Our key theoretical findings reveal that: (1) without uniform input sizes, both first-come-first-served and shortest-first scheduling yield unbounded approximation ratios  (regardless of using output length or total sequence length as the metric), and (2) the general scheduling problem is NP-hard. These results formally justify the need for new scheduling approaches.

\xhdr{Polynomial Time Scheduling Algorithm with Constant Approximation Ratio}. In Section \ref{sec:algorithm}, we introduce \sortF, a novel scheduling algorithm that achieves both computational efficiency and provable performance guarantees. The algorithm operates through three key steps: (1) it first evaluates each request using our designed quality metric, which comprehensively considers both input and output characteristics; (2) it then partitions requests into prioritized batches based on this metric; and (3) within each batch, it applies shortest-output-first prioritization. Crucially, we prove that \sortF \text{ }maintains a constant approximation ratio of at most 48, independent of problem scale – a significant improvement over existing approaches that suffer from unbounded ratios in general cases. The proof, presented in Section \ref{sec:proof}, introduces novel techniques for handling variable input/output sizes, which may be of independent interest and inspire new theoretical analyses for similar scheduling problems.

\xhdr{Approximation Algorithms for Practical Deployment. } Since scheduling latency directly impacts overall inference time in production systems, we investigate efficient approximations of \sortF \text{ }that maintain its performance benefits while reducing computational overhead. The primary complexity bottleneck in \sortF \text{ }lies in its batch partitioning phase based on our quality metric. In Section \ref{sec:approximation}, we analyze three approximation approaches: (1) exact dynamic programming, (2) local swap search, and (3) quantile greedy selection. For each method, we characterize both its theoretical computational complexity and practical applicability, specifying the problem scales where each variant proves to be most effective. These analyses provide system designers with clear guidelines for algorithm selection based on their specific latency requirements and workload characteristics.

\xhdr{LP-based Heuristics. } In Section \ref{sec:extension}, we further analyze the scheduling problem through an optimization lens by formulating it as an Integer Program (IP), as suggested in \cite{jaillet2025online}. While this IP provides theoretical optimality, its time complexity makes it impractical for real-time inference systems requiring millisecond-level decisions. Our work makes two key contributions in this direction: first, we discuss the gap between the IP formulation and its LP relaxation; second, we develop \sortLP, a novel heuristic that leverages the relaxed LP solution structure. \sortLP \text{ }operates by (1) solving the LP relaxation, (2) extracting expected starting times from the fractional solution, and (3) sorting requests according to the expected starting times. We also characterize \sortLP's advantages and limitations, providing practitioners with clear guidance for algorithm selection. 


\xhdr{Numerical Experiments. } In Section \ref{sec:num}, we present comprehensive empirical evaluations using real-world data to validate our theoretical findings. To capture heterogeneous request lengths, we use both the LMSYS-Chat dataset \citep{zheng2023lmsys} and the Arxiv long-document summarization dataset \citep{cohan2018discourse}, and evaluate performance across both unmixed and mixed workload compositions. We compare \sortF{} and its practical variants against standard baselines including first-come-first-served, shortest-first, and LP-guided heuristics. The experiments report mean and standard deviation over multiple random seeds, sensitivity to workload composition, empirical gaps relative to the LP relaxation lower bound, tail latency, and scheduling runtime. We further include a preliminary online experiment under Poisson arrivals to evaluate a receding-horizon version of \sortF{}. The results demonstrate that F-metric-based scheduling consistently improves latency in heterogeneous and overloaded regimes.

\section{Model Review} \label{sec:model}

In this section, we review the scheduling model introduced by \cite{jaillet2025online} for LLM inference under memory constraints. Before presenting the formal model, we provide background on two key features of modern LLM serving systems—\emph{prefill/decode mixing} and \emph{continuous batching}—that the model captures. We then discuss our extension, which relaxes a strong assumption on input sizes made in the original work.

\paragraph{Prefill/Decode (PD) mixing.} 
Modern LLM serving systems support prefill/decode (PD) mixing \cite{kwon2023efficient}, whereby a single processing batch can simultaneously contain requests in different phases—some undergoing prefill while others are mid-decode. This mixing improves hardware utilization by combining diverse workloads within each batch.

\paragraph{Continuous Batching.} 
Traditional batching forms a fixed set of requests and processes them until all complete before admitting new requests. In contrast, continuous batching dynamically admits new requests into an ongoing batch as soon as memory capacity becomes available—typically when an in-progress request completes and releases its KV cache. This approach significantly improves throughput by avoiding idle capacity while waiting for long-running requests to finish.

Figure~\ref{fig:worker} illustrates the scheduling process. Circles represent prefill operations and squares represent decode operations, with colors distinguishing requests. Operations aligned vertically form a batch processed in parallel. The figure demonstrates PD mixing (e.g., prefill and decode operations co-occur in the same batch) and continuous batching (new requests enter as earlier ones complete and release memory).

\begin{figure}[!h]
\centering
\includegraphics[width=0.8\textwidth]{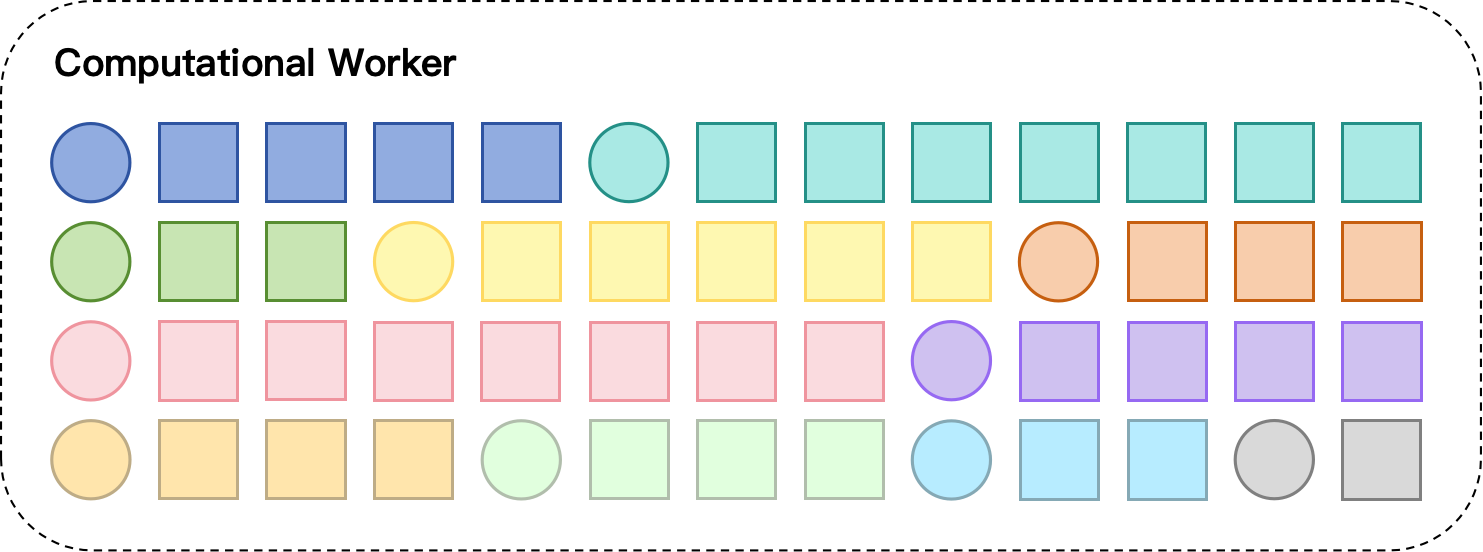}
\caption{Illustration of LLM inference scheduling with prefill/decode mixing and continuous batching. Circles denote prefill operations; squares denote decode tokens; colors distinguish requests. Operations in the same column constitute a batch processed simultaneously. The schedule exhibits PD mixing (batches contain both prefill and decode operations) and continuous batching (requests enter dynamically as memory becomes available).}
\label{fig:worker}
\end{figure}

For readability, Table~\ref{tab:notation} in Appendix \ref{append:notation} summarizes the main notation used throughout the paper.

\subsection{Mathematical Model Review}

\cite{jaillet2025online} introduces a discrete-time system with a single computational worker. The worker has a KV cache capacity of \(M > 0\) tokens; in practice, \(M\) depends on model size and hardware specifications (e.g., GPU memory) and is assumed known to the scheduler.

\paragraph{Request Arrivals and Adversarial Setting.}
We consider \(n\) prompts that arrive simultaneously at time \(t = 0\). Each prompt \(i \in [n]\) is characterized by a pair \((s_i, o_i)\), where \(s_i\) denotes the prefill size (number of input tokens) and \(o_i\) denotes the number of output tokens to be generated. We adopt an adversarial framework: the adversary selects the number of requests \(n\) and the parameters \((s_i, o_i)\) for each request, subject to a scaling constraint described below. 

\paragraph{Scaling Assumption.}
To state asymptotic guarantees as the memory budget \(M\) grows, we impose a scaling constraint on the adversary. Define the peak per-request memory as
\[
L(M) \;:=\; \max_{i \in [n]} \{s_i + o_i\}.
\]
We require \(L(M) = o(M)\), i.e., \(L(M)/M \to 0\) as \(M \to \infty\). This ensures that each individual request occupies a vanishing fraction of the memory budget, allowing concurrency to grow with hardware capacity.

\paragraph{Justification of the Scaling Assumption. } This assumption reflects practical deployment considerations. In real systems, operators first characterize their workload by estimating the rough distribution of prefill and decode lengths for expected requests. They then provision hardware with memory capacity \(M\) substantially larger than typical request sizes to achieve high concurrency and throughput. Our scaling assumption \(L(M) = o(M)\) captures precisely this regime where hardware is appropriately sized for the workload.

\paragraph{Output Length Information.}
In the theoretical analysis, we assume that output lengths \(o_i\) are known to the scheduler. This assumption isolates the fundamental scheduling challenge, which is already NP-hard (see Section \ref{subsec:sf}). In practice, \(o_i\) can be predicted, and several effective prediction methods exist \citep{zheng2023response,qiu2024efficient,shahout2024don}. However, none of these prediction methods can guarantee accuracy. In Section~\ref{sec:approximation}, we introduce practical methodology that performs well even when prediction accuracy is very poor, rather than relying solely on point estimates.

\paragraph{Batch Processing and Memory Constraint.} Each request \(i\) must be processed token by token during the decode phase. The memory required to generate the \(j\)-th output token (\(j \in [o_i]\)) is \(s_i + j\), reflecting the growing KV cache. At each time step \(t\), the system processes a batch \(\mathcal{B}_t\)  that may contain tokens from multiple requests at various stages of completion. Consistent with sequential decoding, at most one token from each request can appear in any batch. The model captures both \textit{PD mixing} (batches combine requests in prefill and decode phases) and \textit{continuous batching} (new requests may join at any time step when capacity permits). The memory constraint requires
\[
\sum_{i \in \mathcal{B}_t} (s_i + a_i) \leq M,
\]
where \(a_i\) is the index of the token from request \(i\) in the batch.



\paragraph{Overloaded Regime and Justification of Offline Setting.} 
We restrict attention to overloaded instances where the total workload far exceeds single-batch capacity:
\[
\frac{n\,L(M)}{M} \;\longrightarrow\; \infty
\qquad \text{as } M \to \infty.
\]
This restriction is essentially without loss of generality for the backlogged regime: when \(n\) is small enough that all requests fit into a single batch, they can be processed simultaneously and scheduling becomes trivial. The algorithmic challenge arises precisely when requests must be sequenced across multiple time steps.

The simultaneous-arrival assumption is intended to model burst or backlogged serving regimes that arise during peak-demand periods. Such regimes are consistent with recent empirical studies of LLM serving workloads, which report substantial burstiness and temporal variation in request arrivals, including periodic and aperiodic workload patterns over minute-, hour-, and day-level timescales \citep{wang2024burstgpt}. Related LLM-serving studies also evaluate systems under diurnal or temporally varying load patterns to model production request-rate variation \citep{goel2026qoserve, jiang2026oserve}. During low-load periods, requests can often be admitted shortly after arrival, and the scheduling problem is less consequential. During peak or transient-overload periods, however, arrivals may exceed service capacity and create a backlog of pending requests that must be prioritized under the KV-cache memory budget. Our assumption that all \(n\) prompts arrive simultaneously at \(t=0\) abstracts this backlogged state and isolates the scheduling difficulty in this high-congestion regime.

We emphasize that the approximation guarantee developed in this paper applies to the offline/backlogged model. To provide preliminary evidence beyond this setting, Section~\ref{sec:num} also evaluates a receding-horizon version of our offline algorithm under Poisson arrivals.

\subsection{Objective and Performance Metric}

The objective is to minimize \emph{total end-to-end latency} (TEL), defined as the sum of completion times. Since all requests arrive at \(t = 0\), the latency of request \(i\) equals its completion time \(c_i(\Lambda)\) under schedule \(\Lambda\):
\[
\text{TEL}(\Lambda; \mathcal{I}) = \sum_{i \in [n]} c_i(\Lambda).
\]

\cite{jaillet2025online} formulate an integer linear program (ILP) to compute optimal schedules. However, solving the ILP is computationally prohibitive for real-time deployment, where decisions must be made within milliseconds. Our goal is therefore to design efficient approximation algorithms with provable performance guarantees.

Let \opt{} denote an optimal schedule for request set \(\mathcal{I}\). The performance of algorithm \(\Lambda\) is measured by its \emph{approximation ratio}:
\[
\text{AR}(\Lambda) = \sup_{\mathcal{I}} \frac{\text{TEL}(\Lambda; \mathcal{I})}{\text{TEL}(\text{\opt}; \mathcal{I})}.
\]
The approximation ratio is always at least 1, with smaller values indicating better worst-case performance.

\subsection{Challenges with Heterogeneous Input Sizes} \label{subsec:sf}

\cite{jaillet2025online} propose a \emph{memory-constrained shortest-first} algorithm (\mcsdf) that prioritizes requests by output length \(o_i\). Before adding a request to the current batch, the algorithm verifies that memory constraints will remain satisfied at all future time steps.

Formally, let \(\mathcal{R}^{(t)}\) denote requests not yet started by time \(t\), and let \(\mathcal{V}^{(t)}\) denote requests currently in progress, with \(p_i\) denoting the start time of request \(i \in \mathcal{V}^{(t)}\). For a candidate subset \(U \subseteq \mathcal{R}^{(t)}\), define \(t_{\max}(U) := \max_{i \in U} \{t + o_i\}\). The feasibility condition is: \begin{equation} 
\label{eqn:Constraint} \sum_{i \in \mathcal{V}^{(t)}} (s_i + t' - p_i) \cdot \mathbbm{1}_{\{o_i \geq t' - p_i\}} + \sum_{i \in U} (s_i + t' - t) \cdot \mathbbm{1}_{\{o_i \geq t' - t\}} \leq M, \quad \forall t' \in \{t, \ldots, t_{\max}(U)\}. 
\end{equation}

\cite{jaillet2025online} establish strong theoretical and empirical performance of \mcsdf{} under the assumption that \(s_i = s\) for all \(i \in [n]\). With uniform prefill sizes, shortest-first scheduling is natural: requests with smaller \(o_i\) have lower peak memory usage (\(s + o_i\)) and shorter processing times, so prioritizing them improves both throughput and memory utilization.

However, uniform prefill sizes may not hold when systems handle diverse workloads mixing long and short prompts. Relaxing this assumption introduces fundamental difficulties. Consider requests \(i\) and \(j\) with \(s_i < s_j\) but \(o_i > o_j\): request \(i\) has lower per-step memory but longer duration, while \(j\) has higher memory but finishes quickly. The optimal priority depends on global interactions with other requests, making the trade-off nontrivial.

We show that \mcsdf{} can perform arbitrarily poorly when input sizes vary.

\begin{theorem} \label{thm:benchmark}
    Suppose each request \(i \in [n]\) satisfies \(s_i \in [1,M]\), \(o_i \in [1,M]\), and \(s_i + o_i \leq M\). Then \mcsdf{} has unbounded approximation ratio: $\mathrm{AR}(\textup{\mcsdf}) \to \infty$ as \(M \to \infty\).
\end{theorem}

The proof appears in Appendix~\ref{append:model}, which also shows that prioritizing by total size \(s_i + o_i\) yields an unbounded ratio. Moreover, the problem becomes computationally hard without uniform inputs.\footnote{Whether minimizing TEL is NP-hard under uniform \(s_i\) remains open.}

\begin{theorem} \label{thm:latencynp}
    Under the model of \cite{jaillet2025online} with \(s_i \in [1,M]\), \(o_i \in [1,M]\), and \(s_i + o_i \leq M\), minimizing total end-to-end latency is NP-hard.
\end{theorem}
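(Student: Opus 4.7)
The plan is to reduce from the strongly NP-hard 3-Partition problem. A 3-Partition instance provides $3m$ positive integers $w_1, \ldots, w_{3m}$ with each $w_i \in (B/4, B/2)$ and $\sum_i w_i = mB$, asking whether they can be split into $m$ triples each summing to $B$. Given such an instance (scaled so that $w_i \geq 2$, which preserves hardness), I would construct a scheduling instance with $n = 3m$ requests by setting $s_i = w_i - 1$, $o_i = 1$, and $M = B$. The model-side conditions $s_i \geq 1$, $o_i \geq 1$, and $s_i + o_i = w_i \leq M$ are all satisfied routinely.

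Because every $o_i = 1$, each request occupies a single time step with memory consumption $s_i + 1 = w_i$, so a feasible schedule is simply a partition of requests into batches $B_1, B_2, \ldots$ with $\sum_{i \in B_k} w_i \leq B$, and the latency is $\mathrm{TEL} = \sum_k k \cdot |B_k|$ since each request in $B_k$ completes at time $k$. Crucially, since $w_i > B/4$, no batch fits four items, so $|B_k| \leq 3$ for every $k$.

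I would then show that the 3-Partition instance is a YES-instance if and only if $\min \mathrm{TEL} \leq K := 3m(m+1)/2$. One direction is immediate: a valid 3-partition induces $m$ batches of size three, giving $\mathrm{TEL} = 3(1 + 2 + \cdots + m) = K$. For the converse, note that over all nonnegative integer sequences $(n_k)$ with $\sum_k n_k = 3m$ and $n_k \leq 3$, the quantity $\sum_k k \cdot n_k$ is uniquely minimized by $n_1 = n_2 = \cdots = n_m = 3$ with value $K$; any other profile permits a strict exchange of one unit from some $n_{j'}$ with $j' > m$ down to some $n_j < 3$ with $j \leq m$, strictly decreasing the sum. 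Hence $\mathrm{TEL} \leq K$ forces exactly three items in each of the first $m$ batches, and combined with $\sum_i w_i = mB$ together with the per-batch capacity $B$, every such batch must have weight exactly $B$, producing a valid 3-partition.

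I expect the main obstacle to be the uniqueness step in the converse: showing that $\mathrm{TEL} \leq K$ pins down the exact batch-size profile requires a careful exchange argument, and this step is precisely what links the latency optimum to the combinatorial rigidity required by 3-Partition. The remaining pieces---the parameter checks, the polynomial size of the reduction, and deducing exact batch weights from the cardinalities---are routine verifications.
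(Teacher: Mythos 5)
Your proof is correct and follows essentially the same route as the paper's: a reduction from 3-Partition with unit output lengths $o_i = 1$, where each request's per-step memory footprint encodes an integer, so a schedule is just a partition of the $3m$ items into batches of total weight at most $M$, and the objective $\sum_k k\,|B_k|$ is minimized exactly when every batch is a full triple. There is, however, one place where your version is more careful than the paper's. Because a request with $o_i = 1$ occupies $s_i + 1$ memory units during its single step, the paper's construction (which takes $s_i = x_i$ and $M = T$) gives a valid triple a footprint of $T + 3 > M$; your choice $s_i = w_i - 1$ (after scaling so $w_i \geq 2$) fixes this so that a triple summing to $B$ occupies exactly $B = M$ units, making the feasibility claim literally true. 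Your converse is also tighter: you give an explicit exchange argument showing that over all profiles $(n_k)$ with $\sum_k n_k = 3m$ and $n_k \leq 3$, the value $\sum_k k\,n_k$ is uniquely minimized at $n_1 = \cdots = n_m = 3$, whereas the paper argues somewhat informally that a failed partition ``forces at least $m+1$ batches'' and hence a strictly larger TEL. Both proofs share the same skeleton; yours pins down the constants and the rigidity step more precisely and is the more rigorous of the two.
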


We prove Theorem~\ref{thm:latencynp} via a reduction from the restricted 3-Partition problem in Appendix~\ref{append:model}. In this restricted version, the input integers \(x_i\) satisfy \(T/4 < x_i < T/2\), and the problem remains strongly NP-hard; see \cite{gareyjohnson1979}, Problem~SP15.

Together with the constant-factor approximation guarantee established later for \sortF{}, Theorem~\ref{thm:latencynp} places the present problem in the same broad complexity landscape as many classical batch scheduling problems with resource constraints: exact optimization is computationally hard, yet carefully designed priority or approximation algorithms can still provide provable worst-case guarantees. The key distinction in our model is that the resource consumption is induced by the KV cache and evolves during autoregressive decoding, so the priority rule must account for both batch throughput and memory feasibility over time.

\section{Algorithm Description} \label{sec:algorithm}

This section presents our novel scheduling algorithm designed to minimize total end-to-end latency. The algorithm dynamically schedules requests by balancing memory constraints with a quality metric that optimizes both batch concurrency and response length efficiency.

At the core of our approach is the quality metric $F(\mathcal{X})$ for any request set $\mathcal{X}$, defined as
$$
F(\mathcal{X}) = \frac{\sum_{r_i \in \mathcal{X}} o_i}{|\mathcal{X}|^2},
$$
where $\mathcal{X}$ is a set of requests, and $|\mathcal{X}|$ is the cardinality of the request set.
Smaller values of $F(\mathcal{X})$ indicate higher scheduling priority. This metric fundamentally improves upon shortest-first methods by naturally balancing batch size and response lengths.

The metric also admits a natural interpretation from the perspective of classical scheduling theory. Let
\[
\bar{o}(\mathcal{X}) :=
\frac{\sum_{r_i\in \mathcal{X}} o_i}{|\mathcal{X}|}
\]
denote the average response length of requests in batch \(\mathcal{X}\). Then
\[
F(\mathcal{X})
=
\frac{\bar{o}(\mathcal{X})}{|\mathcal{X}|},
\qquad
\frac{1}{F(\mathcal{X})}
=
\frac{|\mathcal{X}|}{\bar{o}(\mathcal{X})}.
\]
Thus, \(1/F(\mathcal{X})\) can be interpreted as a throughput density: the number of requests completed per unit of average decode time. Prioritizing batches with smaller \(F(\mathcal{X})\) is therefore equivalent to prioritizing batches with higher throughput density. This is closely related to classical sequencing rules for minimizing total completion time, such as the shortest-processing-time rule for \(1||\sum C_j\) and Smith's rule, or weighted shortest-processing-time rule, for \(1||\sum w_j C_j\). In our setting, the ``job'' being sequenced is a feasible batch rather than an individual request, and the effective processing rate depends jointly on the batch size and the decode lengths. Hence, \sortF{} can be viewed as a batch-level analogue of these classical priority rules under KV-cache memory constraints; see, e.g., \citep{brucker1998scheduling, chen2008logistics} for related batch scheduling models.

To illustrate its operational intuition, we begin with a concrete numerical example that demonstrates how \sortF{} outperforms shortest-first scheduling (\mcsdf):

\begin{example} \label{exp1}
Consider a scenario with KV cache memory $M = 64$ and two request types:
\renewcommand{\arraystretch}{1.2}
\begin{table}[htbp]
\centering
\caption{Request Types for Memory-Constrained Scheduling}
\label{tab:hetero_example}
\begin{tabular}{|l|l|l|l|}
\hline
\textbf{Type} & \textbf{Prompt Size} & \textbf{Response Length} & \textbf{Requests Number} \\ \hline
1 & 63 & 1 & 1 \\ \hline
2 & 1 & 2 & 21 \\ \hline
\end{tabular}
\end{table}

Under \mcsdf{}, which prioritizes shorter outputs, Type 1 would be scheduled first since $o_1 = 1 < o_2 = 2$. After Type 1 completes at time $t=1$, Type 2 requests are processed in batches. With memory constraint $M=64$, each batch can accommodate $\lfloor 64/(1+2) \rfloor = 21$ Type 2 requests simultaneously. The total end-to-end latency (TEL) is
$$
\text{TEL}(\text{\mcsdf}\,;\mathcal{I}) = \underbrace{1}_{\text{Type 1}} + \underbrace{21 \times 3}_{\text{Type 2}} = 64.
$$

In contrast, \sortF{} computes the F-metric for each type:
\begin{align*}
F(\text{Type 1}) &= \frac{1}{1^2} = 1 \\
F(\text{Type 2}) &= \frac{2 \times 21}{21^2} = \frac{42}{441} \approx 0.095.
\end{align*}
Since $F(\text{Type 2}) < F(\text{Type 1})$, \sortF{} prioritizes Type 2 first. Type 2 completes at $t=2$, after which Type 1 is processed at $t=3$. The TEL is
$$
\text{TEL}(\text{\sortF}\,;\mathcal{I})= \underbrace{21 \times 2}_{\text{Type 2}} + \underbrace{3}_{\text{Type 1}} = 45.
$$
This represents a $29.7\%$ reduction in TEL compared to \mcsdf, demonstrating how \sortF{}'s F-metric effectively balances batch size and processing time to minimize latency.
\end{example}

Building on this concrete demonstration, we generalize the intuition through a symbolic example (Example \ref{exp2}) in Appendix \ref{append:example} that shows how the F-metric recovers the optimal choice in a stylized setting.

Examples \ref{exp1} and \ref{exp2} illustrate how the F-metric balances batch size and response lengths in scheduling decisions.

A key challenge arises from variability in output sizes: when requests are batched together, they do not necessarily finish processing at the same time. Under our model, once a request completes, its memory resources are freed, allowing new requests to be dynamically added. If output sizes in a batch vary too much, requests finish asynchronously, forcing the scheduler to switch from batch selection to incremental request insertion.

To address this, one useful property—formalized in Lemma \ref{lemma:output_balance}—ensures that within any selected batch, no single request's output length dominates the average. The proof can be found in Appendix \ref{append:alg}. This balancing characteristic helps avoid stragglers and also provides analytical leverage in Section~\ref{sec:proof}.

\begin{lemma} \label{lemma:output_balance}
For any batch $\mathcal{X}$ selected in Phase~1, let $o_{\max} = \max_{r_i \in \mathcal{X}} o_i$ and $\bar{o} = \frac{\sum_{r_i \in \mathcal{X}} o_i}{|\mathcal{X}|}$. Then,
$$
\bar{o} >\frac{1}{2} \cdot o_{\max}.
$$
\end{lemma}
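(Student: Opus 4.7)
The plan is to exploit the defining optimality property of the algorithm: the selected batch $\mathcal{X}$ is chosen because it minimizes the quality metric $F$ over all feasible alternatives. In particular, if we remove any single request from $\mathcal{X}$, what remains is still a feasible batch (memory constraints are monotone under taking subsets), so the algorithm must have preferred $\mathcal{X}$ to this smaller batch. I will apply this comparison to the request attaining the maximum output length, and then unpack the resulting inequality.

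Concretely, let $n = |\mathcal{X}| \geq 2$ (the case $n=1$ is trivial since then $\bar{o} = o_{\max}$) and let $r^\star \in \mathcal{X}$ achieve $o_{r^\star} = o_{\max}$. Set $\mathcal{X}' := \mathcal{X} \setminus \{r^\star\}$. Since $\mathcal{X}'$ is feasible and the algorithm picked $\mathcal{X}$, we have $F(\mathcal{X}) \leq F(\mathcal{X}')$, i.e.
\[
\frac{n\bar{o}}{n^2} \;\leq\; \frac{n\bar{o} - o_{\max}}{(n-1)^2}.
\]
Cross-multiplying and simplifying yields $n^2 o_{\max} \leq (2n-1)\, n\bar{o}$, which rearranges to $\bar{o} \geq \frac{n}{2n-1} o_{\max}$. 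Since $\frac{n}{2n-1} > \frac{1}{2}$ for every $n \geq 1$, the desired bound $\bar{o} > \frac{1}{2} o_{\max}$ follows.

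The argument is short, so there is no single heavy obstacle; the main thing to get right is the \emph{justification} of the inequality $F(\mathcal{X}) \leq F(\mathcal{X}')$. That step implicitly uses two facts about the algorithm's batch-formation rule: (i) at the decision time, the algorithm is actually free to consider $\mathcal{X}'$ as a candidate, and (ii) feasibility of $\mathcal{X}$ implies feasibility of $\mathcal{X}'$. The second is immediate from monotonicity of the memory constraint, while the first should be read off directly from the definition of \sortF{} in this section. Once these two points are stated cleanly, the algebraic step from $F(\mathcal{X}) \leq F(\mathcal{X}')$ to $\bar{o} > \tfrac12 o_{\max}$ is purely mechanical, and the factor $\tfrac{n}{2n-1}$ even gives a tiny bit of slack that explains why the inequality in the lemma is strict.
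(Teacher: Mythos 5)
Your proof is correct and follows essentially the same route as the paper: both compare $F(\mathcal{X})$ with $F(\mathcal{X}\setminus\{r^\star\})$ for the request $r^\star$ of maximal output length, invoke the optimality of the selected batch to get $F(\mathcal{X}) \le F(\mathcal{X}')$, and then rearrange to obtain $\bar{o} \ge \frac{n}{2n-1}\,o_{\max} > \frac{1}{2}\,o_{\max}$. Your version is slightly more explicit in handling the $n=1$ case and in spelling out why $\mathcal{X}'$ is a feasible candidate, but the core argument is identical.
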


After introducing the intuition and a useful property behind the F-metric, we now describe our scheduling algorithm, \sortF. Let $\mathcal{I}$ denote the set of all requests in the instance. The algorithm has two phases. Phase~1 constructs an ordered list $\mathcal{I}'$ by repeatedly selecting a feasible batch with the smallest $F(\cdot)$, with ties broken by preferring a larger batch. Phase~2 then executes the schedule over discrete timesteps $t=1,2,\ldots$ by maintaining: pending requests $\mathcal{R}^{(t)}$ (not yet started), active requests $\mathcal{V}^{(t)}$ (started but not yet completed), and newly admitted requests $\mathcal{U}^{(t)}$ (admitted at time $t$ and starting at time $t$). Each request $r_i$ has a unique start time $p_i$. The memory consumption at a future timestep $t^*$ is
$$
M(\mathcal{X}, t^*) = \sum_{r_i \in \mathcal{X}} \left( s_i + t^* - p_i \right)\cdot \mathds{1}_{\{o_i \geq t^* - p_i\}}.
$$
The complete procedure is formalized in Algorithm~\ref{algorithm_1} in Appendix \ref{append:alg}.

In Phase~1, \sortF{} iteratively selects batches $\mathcal{X}^*$ that minimize $(F(\mathcal{X}),-|\mathcal{X}|)$ over feasible subsets, i.e., it first minimizes $F(\mathcal{X})$ and breaks ties by preferring a larger batch. The selected requests within each batch are then sorted by nondecreasing $o_i$, and the resulting ordered batches are concatenated into the ordered list $\mathcal{I}'$. 

In Phase~2, the algorithm executes over timesteps by (i) removing completed requests, (ii) admitting new requests from $\mathcal{R}^{(t-1)}$ in the order induced by $\mathcal{I}'$ as long as the memory feasibility checks remain satisfied, and (iii) generating one token per active request concurrently.

\section{Proof of Constant Approximation Ratio} \label{sec:proof}

In this section, we prove the following theorem:

\begin{theorem} \label{thm:cr}
\sortF \text{} achieves a constant approximation ratio upper bounded by $48$ against the optimal schedule \opt, i.e.,
\begin{align*}
    \mathrm{AR}(\textup{\sortF}) < 48.
\end{align*}
\end{theorem}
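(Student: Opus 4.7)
\textbf{Proof Proposal for Theorem \ref{thm:cr}:}

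The plan is to compare \sortF's schedule against \opt \, by (i) exposing the batch structure produced in Phase 1, (ii) deriving a lower bound on \opt's total end-to-end latency that is expressed in the same batch-centric language, and (iii) bounding \sortF's latency from above by a small constant multiple of that lower bound. Let $B_1, B_2, \ldots, B_K$ denote the batches produced in Phase 1, indexed so that $F(B_1) \leq F(B_2) \leq \cdots \leq F(B_K)$ (with $|B_k|$ breaking ties, as in line 4 of Algorithm \ref{algorithm_1}). For each $B_k$ let $\bar o_k = F(B_k)\cdot |B_k|$ and $o_k^{\max} = \max_{i\in B_k} o_i$; by Lemma \ref{lemma:output_balance} we have $o_k^{\max} < 2\bar o_k$, so the entire batch $B_k$ finishes within $2\bar o_k$ timesteps of when it starts. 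This immediately gives the upper bound
\begin{equation*}
\mathrm{TEL}(\textup{\sortF}) \;\leq\; \sum_{k=1}^{K} |B_k|\cdot \Bigl(2\sum_{\ell \leq k}\bar o_\ell\Bigr)
\;=\; 2\sum_{k=1}^{K}|B_k|\bar o_k + 2\sum_{k=1}^{K}|B_k|\sum_{\ell<k}\bar o_\ell,
\end{equation*}
where the second term is the ``waiting cost'' paid by later batches for earlier ones, and Phase~2's dynamic insertion of jobs can only weakly decrease this.

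Next I would lower bound \opt. The essential observation is that the memory constraint, combined with $s_i+o_i\leq M$, limits how many requests can be in progress at any instant, so any schedule must expend at least a certain amount of ``completion mass.'' Concretely, I would argue that if \opt \, finishes a subset $S$ of requests by time $T$, then the linear-in-time memory growth forces $\sum_{i\in S}(s_i+o_i) \leq M\cdot T$ (or a similar area bound), which together with the precedence constraint $c_i \geq o_i$ yields a lower bound of the form $\mathrm{TEL}(\opt) \geq c\cdot\bigl(\sum_k |B_k|\bar o_k + \sum_k|B_k|\sum_{\ell<k}\bar o_\ell\bigr)/24$ for some absolute constant $c$. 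The key engine here is an interchange/exchange argument (generalizing Example \ref{exp2}): if \opt \, processed requests in an order that disagreed with the $F$-ordering, a swap argument would strictly decrease its latency. Because \sortF \, selects $\arg\min F$ at every step while obeying the same memory constraint as \opt, no reordering of \opt \, can outperform \sortF \, by more than a constant factor per batch; the slack between batches is absorbed by the factor of $2$ from Lemma \ref{lemma:output_balance} and by the factor of $2$ coming from memory-area accounting.

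Combining the two bounds term-by-term gives $\mathrm{TEL}(\textup{\sortF})/\mathrm{TEL}(\opt) < 48$, where the constant $48$ accumulates from: a factor $2$ for converting $o_k^{\max}$ to $\bar o_k$ (Lemma \ref{lemma:output_balance}), a factor $2$ from the memory-area lower bound on \opt \, not being tight in both the $s_i$ and $o_i$ coordinates, a factor $2$ from comparing \sortF's batch choice to a hypothetical $F$-optimal batch that \opt \, might implicitly use, and several small factors absorbed in the interchange argument.

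The main obstacle I expect is the middle step: establishing a clean lower bound on \opt \, that is compatible with the batch decomposition. Unlike classical shortest-job-first analyses, here \opt \, may interleave tokens from many requests with wildly different $(s_i,o_i)$ profiles, and the memory usage grows linearly within each request, so a naive volume/area argument undercounts. The delicate part is to show that no matter how \opt \, interleaves, the total ``work'' it must absorb before finishing the requests in $\bigcup_{\ell\leq k}B_\ell$ is comparable to $\sum_{\ell\leq k}|B_\ell|\bar o_\ell$; this is exactly where the $F$-metric's definition as $\sum o_i/|\mathcal{X}|^2$, rather than any simpler ratio, is forced upon us. The tie-breaking rule preferring larger $|\mathcal{X}|$ (line 4) is crucial here, because it prevents \sortF \, from fragmenting memory in a way that would invalidate the area-based comparison to \opt.
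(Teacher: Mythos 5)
Your high-level template — upper-bound \sortF{} batch-by-batch, lower-bound \opt{} with an area/volume argument, and compare — matches the paper in broad outline, and your use of Lemma~\ref{lemma:output_balance} to replace $o_k^{\max}$ by $2\bar o_k$ is exactly right and is used for the factor-$2$ step (Theorem~\ref{thm:aligning}) in the paper. But there are two substantive gaps that the paper closes with machinery your sketch does not have.

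First, the batch-sequential upper bound $\mathrm{TEL}(\sortF)\leq \sum_k |B_k| \cdot 2\sum_{\ell\leq k}\bar o_\ell$ is not tight enough by itself, because Phase~1 can produce a long run of \emph{small} batches, none of which nearly saturates memory. In that regime the sequential waiting cost $\sum_k|B_k|\sum_{\ell<k}\bar o_\ell$ can be much larger than anything an area bound on \opt{} would charge (\opt{} could pack those small batches together). The paper's fix is the grouping step (Theorem~\ref{thm:grouping}): aggregate consecutive batches into groups $\mathcal{Y}_m$ whose terminal batch does nearly saturate memory, then invoke Lemma~\ref{lemma:geometric_time}, which says batch processing times within a group decay geometrically (roughly halving every two batches). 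That geometric decay is what converts the potentially long tail of small batches into a constant factor of $4$, and there is no analogue of it in your argument. Without some structural fact about the sequence $\{\bar o_k\}$, your upper and lower bounds are expressed in incomparable currencies.

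Second, and you acknowledge this yourself, the lower bound on \opt{} is where the proof actually has to do work, and the step you sketch is both imprecise and disconnected from the batch decomposition. The snapshot bound $\sum_{i\in S}(s_i+o_i)\leq M\cdot T$ is not what the linear memory growth gives you (each request $i$ actually occupies area $\Theta(s_i o_i + o_i^2)$ over its lifetime, not $s_i+o_i$), and more importantly you never explain how a volume inequality about \opt{} would line up with the $F$-ordered batches $B_1,\dots,B_K$ when \opt{} is free to interleave tokens arbitrarily across wildly different $(s_i,o_i)$ profiles. The paper avoids interleaving entirely by \emph{restructuring} \opt{} into $\text{\opt}_\text{align}$ (Theorem~\ref{thm:transforming}): partition \opt's timeline into windows $\mathcal{Z}_g$, delay later windows by a constant factor to make room, re-batch each window into near-saturating batches $\mathcal{W}_h$, and replace output lengths with batch averages. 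This costs a factor of $6$ and produces an aligned schedule directly comparable to $\text{\sortF}_\text{align}$ via a throughput argument (Theorem~\ref{thm:linking}). Your proposed exchange/interchange argument is a plausible-sounding alternative but is not developed, and it would face the same interleaving difficulty head-on; the paper deliberately sidesteps it. Finally, your accounting of the constant ($2\times 2\times 2$ plus ``several small factors'') does not actually reach $48$; the paper's $48 = 4\times 2\times 6$ comes from three identifiable transformations, each with a clean multiplicative loss.
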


To prove Theorem \ref{thm:cr}, we transform \sortF \text{} sequentially: $\textup{\sortF} \to \textup{\sortF}_\mathrm{separate}$ (Theorem \ref{thm:separating}) $\to \textup{\sortF}_\mathrm{group}$ (Theorem \ref{thm:grouping}) $\to \textup{\sortF}_\mathrm{align}$ (Theorem \ref{thm:aligning}). At each step, we deliberately modify the schedule's structure, but crucially, we prove that the cumulative latency increase remains within constant factors. In parallel, we restructure $\textup{\opt}$ into $\textup{\opt}_\mathrm{align}$ (Theorem \ref{thm:transforming}). The proof culminates in a direct comparison between $\textup{\sortF}_\mathrm{align}$ and $\textup{\opt}_\mathrm{align}$ (Theorem \ref{thm:linking}).  For clarity, we state the key intermediate constructions through self-contained mathematical definitions (Definitions~\ref{def:trans_separate}--\ref{def:trans_opt_align}). Figures \ref{fig:separating}--\ref{fig:transforming} in Appendix \ref{append:fig} are schematic illustrations of these definitions and are not used as substitutes for the formal constructions.

We now begin the first step of the proof. Let $\{\mathcal{X}_k\}$ denote the sequence of batches iteratively generated in Phase~1 of \sortF. Let the requests in $\mathcal{X}_k$, ordered by increasing response length, be denoted as $r_{k,1}, r_{k,2}, \ldots, r_{k,n_k}$, where $n_k = |\mathcal{X}_k|$.

\begin{definition}[Separation transformation \(\textup{\sortF}\mapsto \textup{\sortF}_\mathrm{separate}\)] \label{def:trans_separate} 
Fix the batch sequence \(\{\mathcal{X}_k\}_{k\ge 1}\) produced by Phase~1 of \sortF. Let \(p_{k,j}\) be the initiation time of request \(r_{k,j}\in\mathcal X_k\) under \sortF. We construct $\textup{\sortF}_\mathrm{separate}$ by (i) enforcing no inter-batch overlap, (ii) enforcing simultaneous initiation within each batch, and (iii) making consecutive batches tight. Define recursively:
\[
\text{start}_1 := \max_{1\le j\le n_1} p_{1,j},\qquad
p'_{1,j}:=\text{start}_1,\ \ j=1,\ldots,n_1,
\qquad
\text{complete}_1:=\max_{1\le j\le n_1}\{p'_{1,j}+o_{1,j}\}.
\]
For each \(k\ge 2\), set
\[
\text{start}_k := \text{complete}_{k-1},\qquad
p'_{k,j}:=\text{start}_k,\ \ j=1,\ldots,n_k,
\qquad
\text{complete}_k:=\max_{1\le j\le n_k}\{p'_{k,j}+o_{k,j}\}.
\]
Thus all requests in \(\mathcal X_k\) start simultaneously at time $\text{start}_k$, and batch \(k\) starts exactly when batch \(k-1\) finishes, i.e.,
\[
\text{start}_k=\text{complete}_{k-1},\qquad k\ge2.
\]
\end{definition}

The transformation in Definition~\ref{def:trans_separate} is schematically illustrated in Figure \ref{fig:separating}. The following theorem shows that this separation transformation can only delay request initiation times, and therefore can only increase total end-to-end latency. The proof is provided in Appendix~\ref{append:proof}.


\begin{theorem}\label{thm:separating}
For any $\mathcal{I}$, the total end-to-end latency of \sortF{} is bounded above by that of
$\textup{\sortF}_\mathrm{separate}$, i.e.,
\[
\mathrm{TEL}(\textup{\sortF}; \mathcal{I})
\le
\mathrm{TEL}(\textup{\sortF}_\mathrm{separate}; \mathcal{I}).
\]
\end{theorem}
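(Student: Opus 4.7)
The plan is to prove a per-request completion-time dominance: for every $r_{k,j}$, its completion time under $\textup{\sortF}$ is at most $T_k + o_{k,j}$, where $T_k := \sum_{\ell=1}^{k-1} o_{\ell, n_\ell}$ denotes the start time of batch $\mathcal{X}_k$ in $\textup{\sortF}_\mathrm{separate}$. Since $T_k + o_{k,j}$ is exactly the completion time of $r_{k,j}$ under $\textup{\sortF}_\mathrm{separate}$, summing these bounds over all requests yields the TEL inequality.

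The dominance is established by induction on $k$, with the stronger claim that under $\textup{\sortF}$ every request in $\mathcal{X}_k$ has been admitted by time $T_k$. The base case $k=1$ is immediate: Phase 1 certifies that $\mathcal{X}_1$ is memory-feasible when started simultaneously, so at $t=0$ the inner loop of Phase 2 iterates through the sorted list $\mathcal{I}'$ in order and admits all of $\mathcal{X}_1$ at $T_1 = 0$. For the inductive step, assume the claim holds for every $\ell \le k-1$. Then every request $r_{\ell,j}$ with $\ell < k$ is admitted by $T_\ell$ and has processing time $o_{\ell,j} \le o_{\ell, n_\ell}$, so it completes by $T_\ell + o_{\ell, n_\ell} \le T_k$. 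Hence at time $T_k$ the active set in $\textup{\sortF}$ consists only of members of $\mathcal{X}_k$. Partition $\mathcal{X}_k$ into the already completed set $\mathcal{C}$, the currently active set $\mathcal{Z}$, and the pending set $\mathcal{Y}$; it remains to show that all of $\mathcal{Y}$ can be admitted at time $T_k$ without violating the memory constraint.

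I expect this feasibility check to be the main obstacle. Phase 1 certifies that $\mathcal{X}_k$ is feasible when the full batch is started simultaneously at a fresh time, but in $\textup{\sortF}$ the members of $\mathcal{Z}$ started at earlier times $\tau_j < T_k$ and therefore carry an extra $T_k - \tau_j$ of instantaneous memory at every future time $t^* \ge T_k$ at which they remain active. Consequently a direct pointwise comparison against the Phase 1 bound fails. The resolution must balance three countervailing effects at each candidate peak time $t^* \in [T_k,\, T_k + o_{k, n_k}]$: (i) the excess memory $T_k - \tau_j$ contributed by each still-active $r_j \in \mathcal{Z}$, (ii) the earlier completion of each $r_j \in \mathcal{Z}$, which frees memory strictly before it would under $\textup{\sortF}_\mathrm{separate}$, and (iii) the zero memory footprint of each $r_j \in \mathcal{C}$ under $\textup{\sortF}$ versus a positive footprint $s_j + (t^* - T_k)$ under $\textup{\sortF}_\mathrm{separate}$. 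Because Phase 2 admits each batch in ascending order of $o_j$, the members of $\mathcal{C} \cup \mathcal{Z}$ are precisely the shorter requests in $\mathcal{X}_k$ whose early admission was fueled by memory freed during $[T_{k-1}, T_k]$; exploiting this structural property allows the excess in (i) to be bounded by the savings in (ii) and (iii), giving $M(\mathcal{Z} \cup \mathcal{Y}, t^*) \le M$ throughout $[T_k,\, T_k + o_{k, n_k}]$. This closes the induction, and summing the resulting per-request bounds over all $(k, j)$ yields the theorem.
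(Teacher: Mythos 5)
Your overall strategy — proving per‑request start‑time dominance $p_{k,j} \le p'_{k,j}$ and then summing completion times — matches the paper's, though you run the induction from the $\textup{\sortF}$ side ("show every request in $\mathcal{X}_k$ is admitted by $T_k$") while the paper argues by iteratively advancing start times in $\textup{\sortF}_\mathrm{separate}$. These are logically equivalent endpoints. The problem is that the argument does not actually close: at the step you yourself flag as the main obstacle, you say that the resolution ``must balance'' three countervailing effects and that the sorted‑prefix structure of $\mathcal{C}\cup\mathcal{Z}$ ``allows the excess in (i) to be bounded by the savings in (ii) and (iii),'' but you never write the inequality or exhibit why the bound holds. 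That is precisely the content of the theorem; naming the tension is not the same as resolving it.

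Concretely, the gap looks like this. At a time $t^\ast \ge T_k$ the memory under $\textup{\sortF}$ is
$\sum_{r_j\in\mathcal{Z}}(s_j + t^\ast - \tau_j)\,\mathbbm{1}[o_j \ge t^\ast-\tau_j]
 + \sum_{r_j\in\mathcal{Y}}(s_j + t^\ast - T_k)\,\mathbbm{1}[o_j \ge t^\ast - T_k]$,
whereas Phase 1 only certifies
$\sum_{r_j\in\mathcal{X}_k}(s_j + t^\ast - T_k)\,\mathbbm{1}[o_j \ge t^\ast - T_k]\le M$.
A request $r_j\in\mathcal{Z}$ with $\tau_j = T_k - \delta$ that is still alive at $t^\ast = T_k$ contributes $s_j+\delta$ on the first line versus $s_j$ in the certified bound, so the first quantity can exceed the second pointwise; to recover feasibility you would have to charge the excess against the requests in $\mathcal{C}$ that already vanished and against the earlier death times of $\mathcal{Z}$, and you would have to do it uniformly over all $t^\ast\in[T_k, T_k+o_{k,n_k}]$. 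Nothing in the proposal carries out that charging argument. The paper, by contrast, supplies a concrete pointwise bound on the memory over the relevant interval after each advancement and shows the bound is monotone under the advancement step; your sketch should be replaced by an explicit inequality of that kind (or the paper's) before the induction can be considered complete.
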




\begin{definition} \label{def:nearly_saturates}
A batch $\mathcal{X}_k$ \emph{nearly saturates} memory $M$ if $\sum_{r_i \in \mathcal{X}_k} (s_i + o_i) \;>\; M - L(M)$,
where $L(M):=\max_{i\in[n(M)]}\{s_i+o_i\}=\mathrm{o}(M)$ is the peak per-request memory defined in Section~\ref{sec:model}.
\end{definition}

Definition~\ref{def:nearly_saturates} identifies the saturation property that will be used to form groups of consecutive separated batches. We next define the grouped analytical schedule induced by an arbitrary increasing sequence of breakpoints ($0=b_0<b_1<b_2<\cdots$).

\begin{definition}[Grouping transformation $\textup{\sortF}_\mathrm{separate}\mapsto \textup{\sortF}_\mathrm{group}$] \label{def:trans_group}
Given an increasing index sequence $0=b_0<b_1<b_2<\cdots$, define grouped workloads
\[
\mathcal{Y}_{m} = \mathcal{X}_{b_{m-1}+1} + \mathcal{X}_{b_{m-1}+2} + \dots + \mathcal{X}_{b_m},
\]
where ``$+$'' means concurrent initiation (analytically) of all requests from the constituent batches. Formally, let $p'_{k,j}$ be the initiation times in $\textup{\sortF}_\mathrm{separate}$. For each group $m$, set the group anchor time as $\tau_m := \min_{1\le j\le n_{b_{m-1}+1}} p'_{b_{m-1}+1,j}$, and for every batch $k\in\{b_{m-1}+1,\ldots,b_m\}$, shift all its requests by the same amount so that its earliest initiation aligns to $\tau_m$. The resulting initiation times define $\textup{\sortF}_\mathrm{group}$. 
\end{definition}

Figure~\ref{fig:grouping} schematically illustrates Definition~\ref{def:trans_group}. The next theorem shows that the breakpoints in Definition~\ref{def:trans_group} can be chosen so that the terminal batch of each group nearly saturates memory in the sense of Definition~\ref{def:nearly_saturates}, while the grouping transformation loses only a constant factor in total latency. The proof is provided in Appendix~\ref{append:proof}.

\begin{theorem} \label{thm:grouping}
For any $\mathcal{I}$, there exists a grouping strategy $\text{\sortF}_\text{group}$ that aggregates $\{\mathcal{X}_k\}$ into larger groups $\{\mathcal{Y}_m\}$ such that the terminal batch of each group nearly saturates memory $M$. Additionally, the total end-to-end latency of $\text{\sortF}_\text{separate}$ is bounded above by
\begin{align*}
    \mathrm{TEL}(\textup{\sortF}_\mathrm{separate}; \mathcal{I}) < 4 \cdot \mathrm{TEL}(\textup{\sortF}_\mathrm{group}; \mathcal{I}).
\end{align*}
\end{theorem}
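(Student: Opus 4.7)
The plan is to construct the grouping $\{\mathcal{Y}_m\}$ explicitly and then perform a batch-by-batch latency comparison. I would scan the Phase-1 batches $\mathcal{X}_1,\mathcal{X}_2,\ldots$ in order, maintain a single open group, and append batches until the most recent one nearly saturates memory in the sense of Definition~\ref{def:nearly_saturates}; that batch becomes the terminal batch of the current group, and a new group opens with the next batch. The terminal-batch property then holds by construction, and the possibly non-saturating trailing group is handled separately since its contribution is asymptotically negligible under the standing assumption $n(M)\epsilon(M)/M\to\infty$.

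I would then unpack the structural implications of the grouping. Any non-terminal batch $\mathcal{X}_k\in\mathcal{Y}_m$ satisfies $\sum_{r_i\in\mathcal{X}_k}(s_i+o_i)\le M-\epsilon(M)$ and is therefore memory-light, while the greedy $F$-minimization in Phase~1 induces a monotone sequence of $F$-values across successive batches. Combining these, I would argue that $o_{\max}(\mathcal{X}_k)\le O_m:=o_{\max}(\mathcal{X}_{\mathrm{term}(m)})$ for every batch in $\mathcal{Y}_m$. This makes the grouped schedule $\textup{\sortF}_\mathrm{group}$, in which all batches within $\mathcal{Y}_m$ share a single start time (the completion time of the previous group's terminal batch) and each request still lasts for its own $o_i$ steps, a clean proxy against which $\textup{\sortF}_\mathrm{separate}$ can be compared.

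For the latency comparison, the only source of discrepancy is the within-group delay: a request in batch $\mathcal{X}_k\subseteq\mathcal{Y}_m$ starts $\sum_{j<k,\,j\in\mathcal{Y}_m}o_{\max}(\mathcal{X}_j)$ later in $\textup{\sortF}_\mathrm{separate}$ than in $\textup{\sortF}_\mathrm{group}$. Summing over requests yields a cross term of the form $\sum_{j\in\mathcal{Y}_m}o_{\max}(\mathcal{X}_j)\cdot\sum_{k>j,\,k\in\mathcal{Y}_m}n_k$. I would bound it by three times the grouped TEL using (i) Lemma~\ref{lemma:output_balance}, which gives $o_{\max}(\mathcal{X}_j)<2\bar o_j$, (ii) the near-saturation of terminal batches, which guarantees $\Omega(M)$ of total peak-memory mass per group, and (iii) the $F$-monotonicity. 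The main obstacle will be this cross-term bound: no single ingredient suffices, and I expect the argument to require a pigeonhole-style amortization in which non-terminal batches' contributions are charged to the guaranteed $\Omega(M)$ mass of the subsequent terminal batch across groups, not just within a single group.
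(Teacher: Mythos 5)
Your construction of $\{\mathcal{Y}_m\}$ (accumulate batches until one nearly saturates, make it the terminal batch, open a new group) is essentially identical to the paper's, and your identification of the cross term $\sum_{j\in\mathcal{Y}_m}o_{\max}(\mathcal{X}_j)\cdot\sum_{k>j,\,k\in\mathcal{Y}_m}n_k$ as the quantity to control is the right target. You also correctly name the three ingredients the paper uses (Lemma~\ref{lemma:output_balance}, near-saturation of terminal batches, and the $F$-optimality of Phase~1). The gap is in how these ingredients combine.

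The monotonicity you invoke, $o_{\max}(\mathcal{X}_k)\le O_m$ for every batch in $\mathcal{Y}_m$, is true but far too weak: it bounds $\sum_{j\in\mathcal{Y}_m}o_{\max}(\mathcal{X}_j)$ only by $a_m\cdot O_m$, where $a_m$ is unbounded. What the paper actually proves (Lemma~\ref{lemma:geometric_time}) is a \emph{geometric} relationship within a group: the $F$-optimality criterion for non-terminal batches, combined with Lemma~\ref{lemma:output_balance}, forces $o_{k+1,n_{k+1}}>o_{k,n_k}$ and, more importantly, $o_{k+2,n_{k+2}}>2\,o_{k,n_k}$. So moving backward from the terminal batch, the per-batch maxima halve at least every two steps, and hence $\sum_{i=1}^{a_m} o_{b_{m-1}+i,n_{b_{m-1}+i}} \le 4\,o_{b_m,n_{b_m}}$. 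This single within-group inequality is what delivers the constant $4$. Your concluding guess that the bound would need ``a pigeonhole-style amortization \ldots across groups, not just within a single group'' points in the wrong direction: no cross-group charging is needed, because the geometric decay already controls the sum within each group by a constant multiple of that group's terminal max. Without recognizing and proving the doubling property $o_{k+2,n_{k+2}}>2\,o_{k,n_k}$ (which is where the $F$-criterion does its real work), the proposal cannot close the cross-term bound, so this is a genuine gap rather than a sketch that merely needs bookkeeping.
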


We next align the response lengths of the terminal batch in each group.

\begin{definition}[Alignment transformation $\textup{\sortF}_\mathrm{group}\mapsto \textup{\sortF}_\mathrm{align}$] \label{def:trans_align}
Consider the terminal batches $\{\mathcal{X}_{b_m}\}$ induced by the grouping in $\textup{\sortF}_\mathrm{group}$. For each terminal batch $\mathcal{X}_{b_m}$ with requests $\{r_{b_m,1},\ldots,r_{b_m,n_{b_m}}\}$, define its average response length
\[
\bar o_{b_m} = \frac{1}{n_{b_m}}\sum_{i=1}^{n_{b_m}} o_{b_m,i}.
\]
The analytically aligned schedule $\textup{\sortF}_\mathrm{align}$ keeps the same initiation times as $\textup{\sortF}_\mathrm{group}$, but replaces every response length in each terminal batch $\mathcal{X}_{b_m}$ by $\bar o_{b_m}$ (all non-terminal batches remain unchanged). 
\end{definition}

Figure \ref{fig:aligning} schematically illustrates Definition~\ref{def:trans_align}. The next theorem shows that the alignment transformation in Definition~\ref{def:trans_align} loses at most a factor of 2 in total latency. The proof is provided in Appendix~\ref{append:proof}.


\begin{theorem} \label{thm:aligning}
For any $\mathcal{I}$, the total end-to-end latency of $\text{\sortF}_\text{group}$ is bounded above by
\begin{align*}
    \mathrm{TEL}(\textup{\sortF}_\mathrm{group}; \mathcal{I}) < 2 \cdot \mathrm{TEL}(\textup{\sortF}_\mathrm{align}; \mathcal{I}).
\end{align*}
\end{theorem}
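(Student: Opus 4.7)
The plan is to write both total end-to-end latencies in a common additive form and compare them termwise. In both $\textup{\sortF}_\mathrm{group}$ and $\textup{\sortF}_\mathrm{align}$, groups $\{\mathcal{Y}_m\}$ execute sequentially and every constituent batch of a group starts concurrently with its peers. Hence the completion time of a request $r$ in group $\mathcal{Y}_m$ equals $\sum_{m' < m} D_{m'}$ plus its own response length, where $D_{m'}$ is the duration of group $\mathcal{Y}_{m'}$ (the maximum response length across all its batches). Summing over all requests and exchanging the order of summation, both total end-to-end latencies take the form
\begin{align*}
\mathrm{TEL} \;=\; C \;+\; \sum_{m=1}^\infty D_m \cdot \sum_{k=b_m+1}^\infty n_k,
\end{align*}
where $C$ collects every request's self-response contribution and only the group durations $D_m$ depend on the schedule.

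First, I would verify that the constant $C$ is identical under the two schedules. The alignment step only alters response lengths inside the terminal batches $\mathcal{X}_{b_m}$, replacing each $o_{b_m,i}$ by $\bar o_{b_m}$. Because $n_{b_m}\bar o_{b_m} = \sum_i o_{b_m,i}$ by the definition of the average, the terminal batch's contribution to $C$ is preserved, and non-terminal batches are untouched. So only the group-duration terms $D_m$ differ between the two schedules.

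Second, I would establish the termwise bound $D_m^\mathrm{group} < 2\,D_m^\mathrm{align}$. Lemma \ref{lemma:geometric_time} forces the terminal batch to dominate under $\textup{\sortF}_\mathrm{group}$, giving $D_m^\mathrm{group} = o_{b_m, n_{b_m}}$. Under $\textup{\sortF}_\mathrm{align}$ the terminal batch still occupies a duration of $\bar o_{b_m}$, so $D_m^\mathrm{align} \geq \bar o_{b_m}$. Applying Lemma \ref{lemma:output_balance} to $\mathcal{X}_{b_m}$ yields $\bar o_{b_m} > \tfrac{1}{2}\,o_{b_m, n_{b_m}}$, which combines with the previous inequality to give $D_m^\mathrm{group} < 2\,D_m^\mathrm{align}$ for every $m$.

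Plugging this termwise bound into the common expression and using $C \leq 2C$ trivially yields the desired inequality. The main subtlety I expect to address is that $D_m^\mathrm{align}$ might actually be set by some non-terminal batch within $\mathcal{Y}_m$ whose original response length still exceeds $\bar o_{b_m}$; however, this possibility only reinforces the inequality $D_m^\mathrm{align} \geq \bar o_{b_m}$, so the factor-of-two bound still goes through without modification. Beyond this, the remaining work is routine bookkeeping of the double sums, mirroring the pattern already used in the proof of Theorem \ref{thm:grouping}.
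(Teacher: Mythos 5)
Your proof is correct and follows essentially the same route as the paper: both decompose each TEL into a schedule-independent self-contribution term $C = \sum_{k}\sum_{i} o_{k,i}$ plus a sum over groups of the group duration times the tail request count, then apply Lemma \ref{lemma:output_balance} termwise to get $\bar o_{b_m} > \tfrac{1}{2}\, o_{b_m,n_{b_m}}$ and conclude $\mathrm{TEL}(\textup{\sortF}_\mathrm{align}) > \tfrac{1}{2}\,\mathrm{TEL}(\textup{\sortF}_\mathrm{group})$. Two minor notes: you are more careful than the paper in explicitly checking that $C$ is preserved under alignment (via $n_{b_m}\bar o_{b_m}=\sum_i o_{b_m,i}$) and in handling the possibility that a non-terminal batch sets $D_m^{\mathrm{align}}$; in fact, because requests are strictly sorted across batches in Phase 1 (so $o_{k,n_k}\le o_{b_m,1}\le\bar o_{b_m}$ for $b_{m-1}<k<b_m$), the terminal batch does dominate after alignment and $D_m^{\mathrm{align}}=\bar o_{b_m}$ holds with equality, which is what the paper implicitly uses — but your $\geq$ version is harmless and gives the same bound.
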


We next transform \opt{} into $\opt_\mathrm{align}$. This transformation groups the optimal schedule into temporal windows, expands the time scale to separate these windows, and aligns response lengths within each resulting batch.

\begin{definition}[Time-window scaling and response-length aligning \(\textup{\opt}\mapsto \textup{\opt}_\mathrm{align}\)] \label{def:trans_opt_align} 

Let \(\textup{\opt}\) have initiation times \(\{p_i\}\) and response lengths \(\{o_i\}\). We construct \(\textup{\opt}_\mathrm{align}\) in three steps.

\emph{(i) Temporal grouping.} Define \[ \mathcal{H}_1:=\{r_i:\,p_i=0\},\qquad \delta_1 := \max\{p_i+o_i \mid r_i \in \mathcal{H}_1\},\qquad \mathcal{Z}_1 := \{r_i \in \mathcal{I} \mid p_i + o_i \leq \delta_1\}. \] For \(g\ge 2\), let $\sigma_{g-1}:=\sum_{j=1}^{g-1}\delta_j$, and let \(\mathcal{H}_g\) be the set of requests initiated but not yet completed by time \(\sigma_{g-1}\). Define $\delta_g := \max\{p_i+o_i \mid r_i \in \mathcal{H}_g\} - \sigma_{g-1}$, and $\mathcal{Z}_g := \{r_i\in\mathcal{I}\mid \sigma_{g-1}<p_i+o_i\le \sigma_{g-1}+\delta_g\}$.

\emph{(ii) Time-window scaling.} For each \(g\ge 2\), shift all requests in \(\mathcal{Z}_g\) forward by \(5 \cdot \sum_{j=2}^{g-1}\delta_j\) time units, preserving the within-group relative timing. Define the scaled cutting time $t_g := 6 \cdot \sum_{j=2}^{g-1}\delta_j$, with the convention that an empty sum equals \(0\). 

\emph{(iii) Batching + response-length aligning.} Within each shifted group \(\mathcal{Z}_g\), sort requests by their shifted initiation times and greedily partition them into batches $\mathcal{Z}_g=\mathcal{W}_{d_{g-1}+1}+\cdots+\mathcal{W}_{d_g}$,  where each batch \(\mathcal{W}_h\) is the maximal prefix whose total memory requirement does not exceed \(M\). Finally, for each batch \(\mathcal{W}_h\) with \(n_h\) requests and response lengths \(\{o_{h,i}\}_{i=1}^{n_h}\), replace all response lengths in the batch by the average $\bar o_h := \frac{1}{n_h}\sum_{i=1}^{n_h} o_{h,i}$. We denote the resulting analytically transformed schedule by \(\textup{\opt}_\mathrm{align}\). 
\end{definition}

Figure \ref{fig:transforming} schematically illustrates Definition~\ref{def:trans_opt_align}. The following theorem shows that the transformation from \opt{} to $\opt_\mathrm{align}$ defined in Definition~\ref{def:trans_opt_align} increases TEL by at most a factor of 6. The proof is provided in Appendix~\ref{append:proof}.

\begin{theorem} \label{thm:transforming}
For any $\mathcal{I}$, there exists a transformed schedule $\text{\opt}_\text{align}$ with time-window scaling and response-length aligning that satisfies
\begin{align*}
\mathrm{TEL}(\textup{\opt}_\mathrm{align}; \mathcal{I}) \leq 6 \cdot \mathrm{TEL}(\textup{\opt}; \mathcal{I}).
\end{align*}
\end{theorem}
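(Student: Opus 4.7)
The plan is to modify $\textup{\opt}$ via two successive transformations, matching the two operations named in the theorem statement, so that the resulting schedule $\textup{\opt}_\mathrm{align}$ shares the coarse batch structure of $\textup{\sortF}_\mathrm{align}$ and can be compared to it directly in Theorem \ref{thm:linking}. Each transformation only inflates completion times (never shrinks any), and each inflation is bounded by a small constant, so the two factors compose multiplicatively to give the claimed $6$.

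First I would perform a time-window scaling. Partition the time axis into windows $W_m = [T_{m-1}, T_m]$ whose right endpoints $T_m$ are the cumulative completion instants produced by $\textup{\sortF}_\mathrm{align}$ (i.e., the time at which group $\mathcal{Y}_m$ would finish). Every request whose $\textup{\opt}$-completion time lies in $W_m$ is re-assigned to finish at $T_m$. Because the $T_m$ grow geometrically thanks to Lemma \ref{lemma:geometric_time}, each $c_i$ is inflated by at most a constant factor, giving a factor of at most $2$ in total TEL. Memory feasibility is preserved trivially, because moving a completion time forward only lowers the instantaneous memory demand at any earlier instant. Second, I would perform a response-length alignment: within each window $W_m$, replace the individual $o_i$ by a single representative value, namely the window length $T_m - T_{m-1}$, so that every request in that window shares a common finishing time. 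By Lemma \ref{lemma:output_balance} applied to the batches $\{\mathcal{X}_{b_m}\}$ together with the near-saturation bound of Equation (\ref{eta_m}), the requests occupying $W_m$ in $\textup{\opt}$ are balanced enough in output length that the uniformization costs at most a further factor of $3$ in TEL, so that the two stages together give the desired $6$.

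The hard part will be verifying memory feasibility after the alignment stage, since lengthening some $o_i$ upward inflates KV-cache usage throughout the window and risks violating the budget $M$ at an intermediate instant. I expect to resolve this by exploiting the standing assumption $s_i, o_i = \epsilon(M)$ together with the asymptotic regime $n(M)\,\epsilon(M)/M \to \infty$ from the start of Section \ref{sec:proof}, so that any per-request overshoot contributes only a lower-order correction to the window's memory footprint; in the rare cases where a stronger pointwise bound is needed, subdividing a window into a constant number of sub-windows absorbs the excess without affecting the overall factor. A secondary bookkeeping task is ensuring that the windows coincide with the group structure of $\textup{\sortF}_\mathrm{align}$, which is handled automatically by the choice $T_m$ equal to the cumulative completion time of the $m$-th group.
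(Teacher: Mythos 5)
Your plan differs from the paper's in a structurally important way: you anchor the windows $W_m$ to $\textup{\sortF}_\mathrm{align}$'s group-completion instants, whereas the paper defines the temporal groups $\{\mathcal{Z}_g\}$ purely internally to $\textup{\opt}$. Concretely, the paper sets $\delta_1 = \max\{p_i + o_i : r_i \in \mathcal{A}_1\}$ for the requests $\mathcal{A}_1$ initiated at time $0$, then recursively sets $\delta_g$ by looking at requests active at $\sum_{j<g}\delta_j$, and the factor $6$ comes from an explicit dilation of these windows (delaying $\mathcal{Z}_g$ by $5\sum_{j=2}^{g-1}\delta_j$, so the cutting times scale as $t_g = 6\sum_{j=2}^{g-1}\delta_j$). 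After that dilation, each $\mathcal{Z}_g$ is decomposed into memory-saturating sub-batches $\{\mathcal{W}_h\}$ and the output lengths are replaced by batch averages $\bar o_h$; the dilation creates enough slack that all sub-batches of $\mathcal{Z}_g$ fit in $[t_g - 3\delta_{g-1},\,t_g + 3\delta_g]$.

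There are two concrete gaps in your version. First, you invoke Lemma \ref{lemma:geometric_time} to claim that the cumulative SortF completion times $T_m$ grow geometrically, so that rounding a completion time up to $T_m$ inflates TEL by at most a factor of $2$. But Lemma \ref{lemma:geometric_time} bounds the maximum output length of batch $\mathcal{X}_{b_{m-1}+i}$ relative to the terminal batch \emph{within the same group} $\mathcal{Y}_m$; it says nothing about how $T_m$ behaves across $m$, and indeed nothing in the paper guarantees $T_{m}\ge 2T_{m-1}$. Without geometric spacing, pushing a request completing at time $T_{m-1}+\epsilon$ out to $T_m$ can blow up its completion time by an unbounded factor, so the ``factor 2'' does not follow. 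Second, you invoke Lemma \ref{lemma:output_balance} to argue that the requests occupying $W_m$ under $\textup{\opt}$ are balanced in output length. That lemma is a consequence of SortF's $F$-minimizing batch-selection criterion and applies only to batches chosen by SortF; there is no reason for $\textup{\opt}$'s requests inside an arbitrary time window to satisfy it. The paper sidesteps this entirely by replacing outputs with batch \emph{averages} $\bar o_h$ within its own greedy decomposition of $\mathcal{Z}_g$, and the $\times 3$ appearing there is a consequence of the memory-saturation bound (inequality \eqref{eta_h}) plus the fact that $\Delta t_g < \delta_{g-1}+\delta_g$, not of output balance. As written, your two constant factors both lack support, so the proposed proof does not go through.
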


Finally, Theorem \ref{thm:linking} provides the critical linkage between $\text{\sortF}_\text{align}$ from Definition~\ref{def:trans_align} and $\text{\opt}_\text{align}$ from Definition~\ref{def:trans_opt_align}. The proof is provided in Appendix~\ref{append:proof}.

\begin{theorem} \label{thm:linking}
For any $\mathcal{I}$, the total end-to-end latency of $\text{\sortF}_\text{align}$ is upper bounded by the total end-to-end latency of $\text{\opt}_\text{align}$, i.e.,
\begin{align*}
    \mathrm{TEL}(\textup{\sortF}_\mathrm{align}; \mathcal{I}) \leq \mathrm{TEL}(\textup{\opt}_\mathrm{align}; \mathcal{I}).
\end{align*}
\end{theorem}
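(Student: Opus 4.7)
The plan is to prove $\mathrm{TEL}(\textup{\sortF}_\mathrm{align}; \mathcal{I}) \leq \mathrm{TEL}(\textup{\opt}_\mathrm{align}; \mathcal{I})$ by reducing both schedules to a common F-sorted form and then leveraging the greedy minimality of \sortF's batch selection to perform an exchange argument. First, I would observe that Phase~1 of \sortF{} automatically produces its terminal batches in ascending $F$: at iteration $m+1$ the batch $\mathcal{X}_{b_{m+1}}$ was already a feasible candidate at iteration $m$ (its requests lie in the remaining pool, which is a subset of the iteration-$m$ pool), so the greedy $\arg\min$ yields $F(\mathcal{X}_{b_m}) \leq F(\mathcal{X}_{b_{m+1}})$. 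Thus the groups of $\textup{\sortF}_\mathrm{align}$ are already sorted by $F$.

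Second, I would apply the pairwise interchange argument of Example~\ref{exp2} to $\textup{\opt}_\mathrm{align}$: swapping two adjacent aligned sub-batches $\mathcal{W}_h$ and $\mathcal{W}_{h+1}$ with $F(\mathcal{W}_h) > F(\mathcal{W}_{h+1})$ weakly decreases TEL, since the only change is the cross-term $\bar{o}_h n_{h+1} - \bar{o}_{h+1} n_h > 0$. Therefore, without loss of generality the sub-batches of $\textup{\opt}_\mathrm{align}$ may be assumed F-sorted, and both TELs take the common form
\[
\mathrm{TEL} \;=\; \sum_k \bar{o}_k \cdot N_{>k} \;+\; \sum_i o_i,
\]
where the last term is identical across the two schedules and the sum runs over processing units (groups for \sortF, sub-batches for \opt).

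Third, I would run an exchange-based induction comparing the two F-sorted sequences unit by unit. Starting from the first unit, I would replace $\textup{\opt}_\mathrm{align}$'s leading sub-batch $\mathcal{W}_1$ with \sortF's leading group $\mathcal{Y}_1$ (anchored by the greedy-optimal $\mathcal{X}_{b_1}$), using greedy minimality of $F$ on the shrinking residual pool to show each replacement weakly decreases TEL. Iterating, $\textup{\opt}_\mathrm{align}$'s partition is driven into $\textup{\sortF}_\mathrm{align}$'s partition with TEL only decreasing, yielding the theorem.

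The main obstacle is making the exchange rigorous. Unlike Example~\ref{exp2}, which only reorders a fixed batching, here I am \emph{repartitioning} requests: moving a request from $\mathcal{W}_h$ into $\mathcal{Y}_1$ alters the time-indexed memory constraint $M(\mathcal{X}, p_i + o_i) \leq M$ for both the donating and the receiving batch, and also shifts the $F$-values of the affected units, so F-sortedness and memory feasibility must be re-established after each swap. The key leverage is (i) \sortF's greedy minimality of $F$ over \emph{all} memory-feasible subsets of the remaining pool (not merely within a fixed partition), and (ii) the near-saturation property in Definition~\ref{def:nearly_saturates} combined with the asymptotic regime $n(M)\epsilon(M)/M \to \infty$, which confines per-batch memory slack to lower-order terms. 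I expect the exchange to be formalized through a charging scheme that maps each request in $\mathcal{Y}_m$ to requests in the sub-batches $\mathcal{W}_h$ it overlaps, bounding the resulting TEL delta via the cross-term identity exposed in Example~\ref{exp2}.
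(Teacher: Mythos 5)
Your proposal takes a genuinely different route from the paper. The paper does not perform an exchange argument or repartition $\textup{\opt}_\mathrm{align}$ at all; instead it makes a two-part appeal: (i) for each $k$, given the prior batches fixed, $\textup{\sortF}_\mathrm{align}$ greedily minimizes $F(\mathcal{X}_k)$, which is the reciprocal of the per-batch throughput $\Phi(\mathcal{X}_k)=n_k/\bar o_k$; and (ii) the memory utilization efficiency of $\textup{\sortF}_\mathrm{align}$ is $\eta_m > 1/2$ per group (from near-saturation), whereas the $6\times$ time scaling baked into the construction of $\textup{\opt}_\mathrm{align}$ in Theorem~\ref{thm:transforming} caps the latter's effective memory efficiency at $\eta_{[t_g,t_{g+1}]}\leq 1/6$, forcing it to have a strictly larger makespan. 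The combination of ``more throughput earlier'' and ``smaller makespan'' is then asserted to yield the TEL domination. Notice that the $6\times$ slack is doing the heavy lifting; it is what lets the paper avoid any direct, request-by-request comparison of the two partitions.

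Your proposal instead tries to drive $\textup{\opt}_\mathrm{align}$'s partition into $\textup{\sortF}_\mathrm{align}$'s partition via pairwise exchanges. The F-sorting observations in your first two steps are fine, and the cross-term identity from Example~\ref{exp2} is the right tool for \emph{reordering} a fixed partition. But the third step, which is the crux, is not an argument yet: you correctly flag that the exchange must \emph{repartition} requests, that this perturbs both the time-indexed memory constraint and the $F$-values of the donor and recipient units, and that a charging scheme is needed — and then you stop there. That acknowledged obstacle is exactly where the proof has to live; without it you have not shown the TEL weakly decreases at each step, only conjectured that it should. Moreover, your proposal makes essentially no use of the $6\times$ inflation that Theorem~\ref{thm:transforming} builds into $\textup{\opt}_\mathrm{align}$. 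A request-level exchange from $\textup{\opt}_\mathrm{align}$ to $\textup{\sortF}_\mathrm{align}$ with \emph{no} slack would have to prove that \sortF's partition is globally (not just greedily) TEL-optimal among all feasible F-sorted aligned partitions, which is a much stronger claim than the theorem needs and likely false as stated; the $6\times$ cushion is precisely what makes the weaker greedy/throughput comparison in the paper suffice. So while the informal sketches are at a comparable level of rigor, your route is missing the one ingredient the paper actually leans on, and the gap you flagged is real and unresolved.
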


Building upon these results, we now establish Theorem \ref{thm:cr}.

\begin{proof} {Proof of Theorem \ref{thm:cr}:} Given any $\mathcal{I}$, we combine the guarantees from Theorems \ref{thm:separating}, \ref{thm:grouping}, \ref{thm:aligning}, \ref{thm:transforming}, and \ref{thm:linking} to derive the following chain of inequalities for the total end-to-end latency:
\begin{align*}
    & \text{TEL}(\text{\sortF}; \mathcal{I}) \leq \text{TEL}(\text{\sortF}_\text{separate}; \mathcal{I}) < 4 \cdot \text{TEL}(\text{\sortF}_\text{group}; \mathcal{I}) \\
    & < 8 \cdot \text{TEL}(\text{\sortF}_\text{align}; \mathcal{I})  \leq 8 \cdot \text{TEL}(\text{\opt}_\text{align}; \mathcal{I}) \leq 48 \cdot \text{TEL}(\text{\opt}; \mathcal{I}),
\end{align*}
which implies that the approximation ratio of \sortF{} is bounded by
$\text{AR}(\text{\sortF}) < 48$.
\Halmos
\end{proof}

\section{Approximation Algorithm and Practical Implementation under Bad Predictions} \label{sec:approximation}

As described in Section \ref{sec:algorithm}, Phase~1 of \sortF{} requires solving the combinatorial optimization problem
$\mathcal{X}^* = \arg\min_{\mathcal{X} \subseteq \mathcal{I}} F(\mathcal{X})$ subject to the memory feasibility constraints. When this Phase~1 subproblem is solved exactly, the resulting schedule corresponds to the idealized \sortF{} analyzed in Section~\ref{sec:proof}. However, exhaustive search is infeasible for large request sets, so we present one exact solver, such as dynamic programming, as a benchmark, and two scalable heuristics for practical use. 

\subsection{Exact Dynamic Programming}

This algorithm employs dynamic programming to find the optimal request subset $\mathcal{X}^* \subseteq \mathcal{I}$ that minimizes the quality metric $F(\mathcal{X})$ within memory constraint $M$. The solution leverages a dual-component state representation:
\begin{align*}
&\text{State:} & dp[k][m] &= \min\left\{ \sum_{r_i \in \mathcal{X}} o_i : |\mathcal{X}| = k,\ \sum_{r_j \in \mathcal{X}} (s_j + o_j) = m \right\} \\
&\text{Path:} & path[k][m] &= \mathcal{X} \text{ achieving } dp[k][m] \\
&\text{Objective:} & \mathcal{X}^* &= \arg\min_{\substack{k \in [1,n] \\ m \leq M}} \left\{ \frac{dp[k][m]}{k^2} \right\}
\end{align*}

The state $dp[k][m]$ captures the minimal sum of output lengths for any $k$-request batch consuming $m$ memory, while the path component $path[k][m]$ records the actual request indices achieving this value. This dual representation enables efficient exploration of the solution space and exact reconstruction of the optimal request set. The complete procedure is formalized in Algorithm \ref{alg:exact_dp}, which can be found in Appendix \ref{append:approx}.

The reverse traversal order is essential to prevent multiple inclusions of the same request while preserving the optimal substructure property. Path reconstruction occurs after all state transitions complete, converting stored indices into actual request objects. The $O(n^2M)$ complexity provides a practical exact solution for moderate-scale deployments where $n \leq 100$, establishing the fundamental benchmark for Phase~1 batch selection.

\subsection{Local Swap Search}

This heuristic algorithm efficiently refines an initial solution through iterative local improvements. Starting with a feasible batch $\mathcal{X}_0$ constructed by sorting requests in ascending order of $s_i + o_i$ and applying memory-constrained greedy selection, the method systematically explores request swaps to progressively enhance solution quality. The approach exploits the combinatorial structure of the optimization landscape while avoiding exhaustive search.

The core operation is pairwise exchange: for any $r_{out} \in \mathcal{X}$ and $r_{in} \notin \mathcal{X}$ satisfying
\begin{enumerate}
    \item Memory feasibility: $\text{mem} + (s_{in} + o_{in}) - (s_{out} + o_{out}) \leq M$,
    \item Quality improvement: $F(\mathcal{X}') < F(\mathcal{X})$ with $\mathcal{X}' = (\mathcal{X} \setminus \{r_{out}\}) \cup \{r_{in}\}$.
\end{enumerate}

Formally, the iterative refinement is described by
$$
\mathcal{X}_{k+1} = 
\begin{cases} 
\mathcal{X}_k & \nexists \text{ improving swap} \\
\mathcal{X}_k \oplus (r_{in}, r_{out}) & F(\mathcal{X}_k \oplus (r_{in}, r_{out})) < F(\mathcal{X}_k),
\end{cases}
$$
where $\oplus$ denotes the exchange operation. The algorithm terminates at local minima where no improving swaps exist. The complete procedure is formalized in Algorithm \ref{alg:local_swap}, which can be found in Appendix \ref{append:approx}.

The $O(n^2)$ complexity enables deployment at scale $n \leq 500$. This method is a heuristic: it may get trapped in local minima and we do not provide a worst-case approximation guarantee for the Phase~1 objective.

\subsection{Quantile Greedy Selection}

This approach efficiently handles large-scale scheduling problems by combining statistical sampling with two-phase greedy selection. The method strategically limits outlier requests while optimizing memory utilization for maximum throughput:
\begin{align*}
&\text{Phase 1 (Core Selection):} & \mathcal{X}_1 &= \{ r_i \in \mathcal{I} : s_i + o_i \leq Q_p,\ o_i \leq Q_o \} \\
&\text{Phase 2 (Capacity Filling):} & \mathcal{X} &= \mathcal{X}_1 \cup \left\{ r_j \in \mathcal{I} \setminus \mathcal{X}_1 : \text{mem} + s_j + o_j \leq M \right\},
\end{align*}
where quantile thresholds $Q_p$ and $Q_o$ are derived from workload sampling. The complete pseudocode is provided in Algorithm \ref{alg:quantile_greedy} in Appendix \ref{append:approx}.

The $O(n)$ complexity ensures practical deployment for large-scale systems with $n \geq 1000$ requests. This method is designed as a lightweight heuristic based on distributional trimming and greedy filling; we do not provide a worst-case approximation guarantee for the Phase~1 objective. 

\subsection{Algorithm Selection Guidelines}

The three approximation algorithms present distinct operational characteristics that enable optimized deployment across the scalability spectrum. System designers should consider the algorithmic trade-offs in terms of solution quality, computational efficiency, and applicability range when selecting schedulers for specific deployment scenarios. The comparative properties are formally summarized in Table \ref{tab:algorithm_comparison}, which serves as the primary reference for implementation decisions.
\renewcommand{\arraystretch}{1.2}
\begin{table}[htbp]
\centering
\caption{Algorithmic Properties and Operational Ranges (Phase~1 subproblem solvers)}
\label{tab:algorithm_comparison}
\small
\begin{tabular}{|p{0.22\textwidth}|p{0.16\textwidth}|p{0.15\textwidth}|p{0.19\textwidth}|p{0.19\textwidth}|}
\hline
\textbf{Algorithm} & \textbf{Phase~1 Solver} & \textbf{Complexity} & \textbf{Recommended Scale} & \textbf{Approximation Guarantee Preserved?} \\ \hline
Exact Dynamic Programming & Exact & \(O(n^2M)\) & \(n \leq 100\) & Yes \\ \hline
Local Swap Search & Heuristic & \(O(n^2)\) & \(n \leq 500\) & No (heuristic) \\ \hline
Quantile Greedy & Heuristic & \(O(n)\) & \(n \geq 1000\) & No (heuristic) \\ \hline
\end{tabular}
\end{table}

Selection decisions should prioritize alignment between algorithm capabilities and deployment requirements. Accuracy-sensitive systems may favor the exact DP despite its computational cost, while latency-critical large-scale deployments benefit from linear-time heuristics. For moderate-scale systems with $200 \leq n \leq 500$, the local search approach offers a balanced compromise with quadratic complexity that remains computationally manageable. Environmental factors including workload heterogeneity and available hardware resources further influence implementation choices; heterogeneous systems may particularly benefit from the Quantile Greedy's distribution-aware selection that helps constrain outlier impact. We emphasize that the Local Swap and Quantile Greedy procedures are included primarily for computational efficiency and observed performance; we do not provide worst-case approximation guarantees for them, nor do we claim that the constant-factor bound in Section~\ref{sec:proof} applies when Phase~1 is solved heuristically. Hybrid implementations can dynamically switch algorithms based on real-time request volume to maintain robust performance across varying load conditions. In the next section, we will compare the performance of these methods on real-data numerical experiments.

\subsection{Handling Prediction Uncertainty via Adaptive Output Length Refinement} \label{subsec:prediction}

The algorithms presented thus far assume that output lengths $o_i$ are known exactly. In practice, output lengths must be predicted, and predictions are inherently imperfect. We now describe how to extend \sortF{} to operate robustly under prediction uncertainty by incorporating the adaptive refinement strategy of \cite{chen2025adaptively}.

\paragraph{Prediction Model.} For each request $i$, a machine learning predictor provides an interval $[\ell_i, u_i]$ such that the true output length satisfies $o_i \in [\ell_i, u_i]$. Such interval predictions arise naturally from classification-based predictors that assign requests to output-length bins. The ratio $\gamma := \inf_{i} \ell_i / u_i \in (0,1]$ captures prediction confidence; smaller $\gamma$ indicates greater uncertainty.

\paragraph{Challenges with Na\"ive Approaches.} Two straightforward strategies are to substitute $o_i$ with either $u_i$ (conservative) or $\ell_i$ (optimistic) throughout the algorithm. The conservative approach, termed $\mathcal{A}_{\max}$ in \cite{chen2025adaptively}, prevents memory overflow but suffers from severe performance degradation as prediction accuracy decreases—memory is over-reserved, reducing throughput. The optimistic approach risks memory violations when actual output lengths exceed predictions.

\paragraph{Adaptive Refinement via $\mathcal{A}_{\min}$.} The key insight of \cite{chen2025adaptively} is to start optimistically and adapt during inference. Their algorithm $\mathcal{A}_{\min}$ initially treats each request's output length as the lower bound $\ell_i$. During inference, if request $i$ has generated $\tilde{o}_i$ tokens without completing, its effective output length estimate is updated to $\tilde{o}_i$. This dynamic refinement allows the scheduler to react to actual generation progress rather than committing to potentially inaccurate predictions. When memory overflow is projected, $\mathcal{A}_{\min}$ employs ordered eviction: requests with smaller current estimates $\tilde{o}_i$ are removed from the active batch first, as they are closer to completion and their eviction frees memory with minimal wasted computation.

\paragraph{Integration with \sortF.} We integrate adaptive refinement into \sortF{} as follows. In Phase~1, batch construction uses the predicted lower bounds $\ell_i$ in place of $o_i$ when evaluating the objective $F(\mathcal{X})$ and checking memory feasibility. In Phase~2, the scheduling execution maintains a running estimate $\tilde{o}_i$ for each active request, initialized to $\ell_i$ and updated to $\max\{\tilde{o}_i, t - p_i\}$ at each time step $t$, where $p_i$ is the start time of request $i$. Memory feasibility checks use these running estimates. If at any time step the projected memory usage exceeds $M$, ordered eviction is triggered: requests are removed from the active set in increasing order of $\tilde{o}_i$ until feasibility is restored. Evicted requests are returned to the pending queue and may be rescheduled in subsequent batches.

The combined algorithm, which we call \sortF{}+$\mathcal{A}_{\min}$, is summarized in Algorithm~\ref{alg:sortF_Amin} in Appendix \ref{append:approx}. This integration preserves the batch construction logic of \sortF{} while inheriting the robustness properties of $\mathcal{A}_{\min}$ for handling prediction errors.

\section{Extensions and Discussions} \label{sec:extension}


Building on our earlier work---where we proposed the \sortF{} algorithm and established its constant-factor approximation ratio for this NP-hard problem---we now discuss several complementary extensions. First, we use integer programming (IP) and linear programming (LP) to further illuminate the structure of the problem through relaxations and lower bounds, and to derive practical LP-guided scheduling heuristics. Second, motivated by streaming arrivals in production systems, we introduce a receding-horizon online variant of \sortF{} that repeatedly applies the F-metric to the currently available backlog.

\subsection{Heuristics from Integer Programming and Linear Programming Relaxation}

From \cite{jaillet2025online}, the offline optimum can be formulated as the following integer program:
\begin{align}
\text{Optimal-IP} = \min \,& \sum_{i \in [n]} \left(\sum_{t=\{0, \dots, \bar T\}} t \cdot x_{i,t}+o_i \right)  \label{eq:objective} \\
\text{s.t.}\, 
& \sum_{t=\{0, \dots, \bar T\}} x_{i,t} = 1, && \forall i \in [n] \label{eq:start_once} \\
&  \sum_{i=1}^{n} \sum_{k=\max\{0,t-o_i\}}^{t-1} (s_i+t-k)x_{i,k} \leq M, && \forall t \in [\bar T] \label{eq:memory_constraint} \\
& x_{i,t} \in \{0,1\}, && \forall i \in [n], \forall t \in [\bar T] \label{eq:integrality}
\end{align}
where $\bar T$ is an upper bound on the time horizon. The decision variables are $x_{i,t}$. If $x_{i,t}=1$, request $i$ starts at time $t$. Constraint~\eqref{eq:start_once} enforces that each request starts exactly once, and constraint~\eqref{eq:memory_constraint} ensures that at any time $t$, the total KV-cache memory usage does not exceed $M$.

Since solving Optimal-IP is computationally intractable, we relax the integrality constraints and obtain the following LP:
\begin{align}
\text{Relaxed-LP} = \min \,& \sum_{i \in [n]} \left(\sum_{t=\{0, \dots, \bar T\}} t \cdot x_{i,t}+o_i \right)   \\
\text{s.t.}\, 
& \sum_{t=\{0, \dots, \bar T\}} x_{i,t} = 1, && \forall i \in [n]  \\
&  \sum_{i=1}^{n} \sum_{k=\max\{0,t-o_i\}}^{t-1} (s_i+t-k)x_{i,k} \leq M, && \forall t \in [\bar T] \label{eq:memory_constraint2} \\
& x_{i,t} \geq 0. && \forall i \in [n], \forall t \in [\bar T] 
\end{align}

The Relaxed-LP objective value provides a valid lower bound on Optimal-IP and thus on the offline optimal latency. However, optimal LP solutions $x_{i,t}^{*}$ are typically fractional, and converting them into a feasible discrete schedule requires rounding or other heuristics. In what follows, we introduce two LP-guided scheduling heuristics, \sortLP{} and \LPSwap{}, whose effectiveness is assessed empirically (Appendix~\ref{append:simulation} and Section~\ref{sec:num}). We do not claim provable guarantees for these rounded schedules; in particular, the integrality gap of Relaxed-LP and the approximation quality of the rounding procedures are left for future work.

\paragraph{\sortLP: LP-induced ordering heuristic.}
The algorithm proceeds in three steps. First, it solves Relaxed-LP to obtain an optimal fractional solution $x_{i,t}^{*}$. Second, for each request $i$, it computes the LP-induced average expected start time $y_i \;=\; \sum_{t \in [\bar{T}]} t \cdot x_{i,t}^{*}$.
Finally, \sortLP{} sorts requests by increasing $y_i$ and then executes them using the same feasible admission rule as in Phase~2 of \sortF. Intuitively, smaller $y_i$ indicates that the LP relaxation ``prefers'' starting request $i$ earlier; we use this as an ordering signal rather than a guarantee. The detailed procedure is formalized in Algorithm~\ref{algorithm_sortLP} in Appendix~\ref{append:extension_pseudo}.

\paragraph{\LPSwap: LP ordering + F-metric refinement.}
While \sortLP{} provides a principled ordering signal from the LP relaxation, it may be further improved by incorporating our F-metric framework. To this end, we propose \LPSwap, a hybrid approach that starts from the \sortLP-induced ordering and then applies F-metric-guided local swap refinements when forming batches. This hybrid strategy leverages both the global perspective captured by the LP relaxation and the local optimization capability of the swap procedure. The complete pseudocode is provided in Algorithm~\ref{algorithm_LPswap} in Appendix~\ref{append:extension_pseudo}. We evaluate the effectiveness of these LP-based heuristics using simulated data in Appendix~\ref{append:simulation} and compare them on the real dataset in Section~\ref{sec:num}.

\subsection{Heuristics under Online Arrival}

\paragraph{\sortF{}-Online: receding-horizon F-metric scheduling.}
The offline model studied in the main theoretical analysis assumes that all requests are available at time \(t=0\). To explore how the same scheduling principle can be adapted to streaming arrivals, we also consider a receding-horizon online variant, denoted by \sortF{}-Online. At each iteration \(t\), the scheduler observes the currently pending backlog, i.e., requests that have arrived but have not yet started, and constructs a priority order using the same F-metric batch-selection principle as \sortF{}. The system then admits the longest feasible prefix of this order under the live residual KV-cache profile, exactly as in continuous batching. Hence, feasibility is checked against currently active requests at every iteration, while only the priority order over the pending backlog is updated by the online policy.

To reduce scheduling overhead, \sortF{}-Online uses a two-timescale implementation. On the slow timescale, it recomputes the F-based priority order every \(10\) iterations, or earlier if the cached priority list is empty or if the backlog has changed substantially. In our implementation, this early refresh is triggered when the number of newly arrived pending requests reaches at least \(50\%\) of the backlog size at the previous recomputation. On the fast timescale, at every iteration, the scheduler reuses the cached priority order, removes requests that have already started, appends newly arrived requests until the next recomputation, and admits the longest feasible prefix under the current memory profile.

The F-based planning step is also capped to keep online decisions lightweight. Specifically, \sortF{}-Online applies the local-swap version of F-metric batch construction only to a look-ahead window of at most \(300\) pending requests. Once \(300\) requests have been ordered, any remaining pending requests are appended using a simple fallback order based on increasing \(s_i+o_i\). This cap prevents large online backlogs from causing excessive scheduling overhead, while still allowing the scheduler to optimize the front portion of the queue that is most likely to be admitted soon. 

For the theoretical analysis, under online adversarial arrival setting, \cite{jaillet2025online} proves the following theorem, which provides the lower bound of competitive ratio.

\begin{theorem}[Competitive Ratio Lower Bound \cite{jaillet2025online}]
    Under online adversarial arrival setting, even if $s_i=s$ for all $i$, no online deterministic algorithm can achieve a constant competitive ratio.
\end{theorem}

Therefore, to evaluate the performance of  this receding-horizon policy, we conduct numerical experiments under Poisson arrivals in Section~\ref{sec:num_online}.

\section{Numerical Experiments} \label{sec:num} 

In this section, we conduct numerical experiments with open-source real datasets to evaluate the performance of \sortF{} and its approximations, including the newly proposed \sortLP{} and \LPSwap{}. We also include a preliminary online experiment under Poisson arrivals to assess a receding-horizon version of \sortF{}.

\xhdr{Dataset Overview. } To evaluate LLM inference performance under varying input prompt lengths—particularly in scenarios mixing short and long prompts—we combine two publicly available datasets, as no single existing dataset meets this need. The first dataset, introduced by \cite{zheng2023lmsys}, contains conversational data from over 210,000 unique IP addresses, collected via the Vicuna demo and Chatbot Arena. This dataset, available at \url{https://huggingface.co/datasets/lmsys/lmsys-chat-1m}, consists of short, interactive exchanges with a mean input length of 41.84 tokens (median: 12.00) and a mean output length of 85.35 tokens (median: 43.00), as shown in Figure \ref{fig:distribution1}. 

\begin{figure}[!h]
\center
\includegraphics[width=0.8\textwidth]{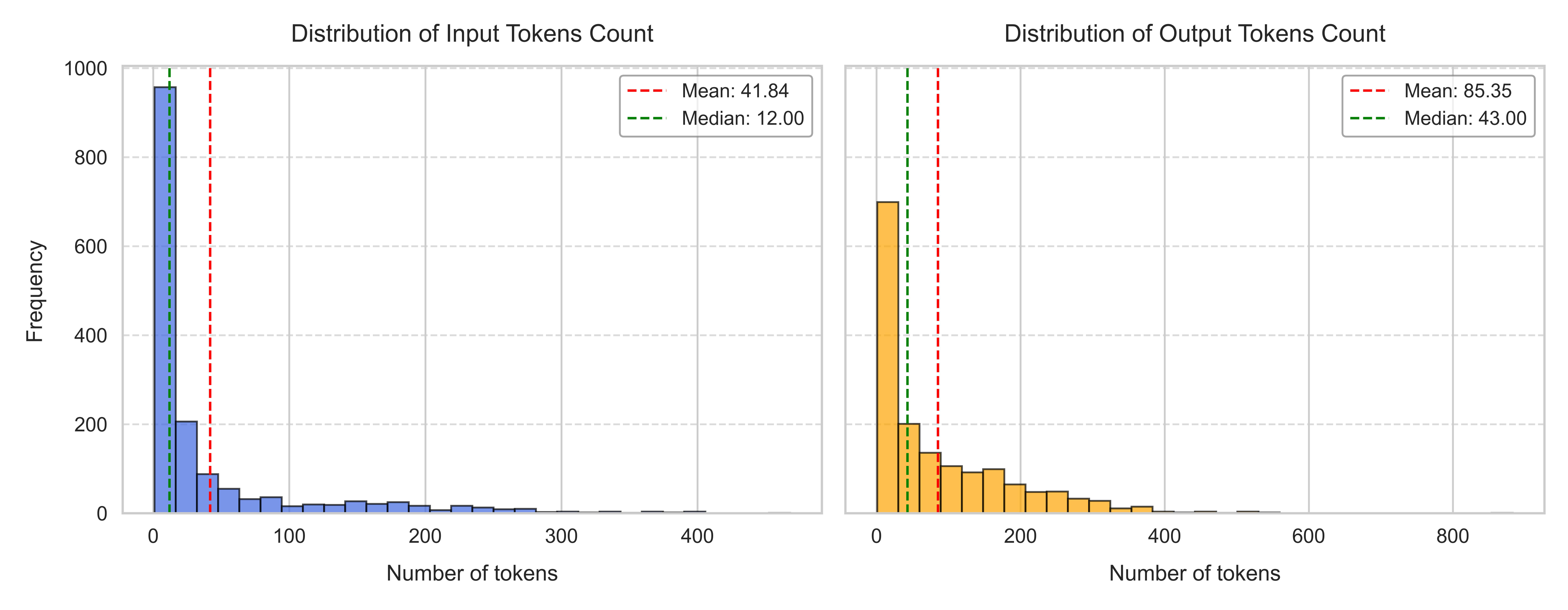}

\includegraphics[width=0.8\textwidth]{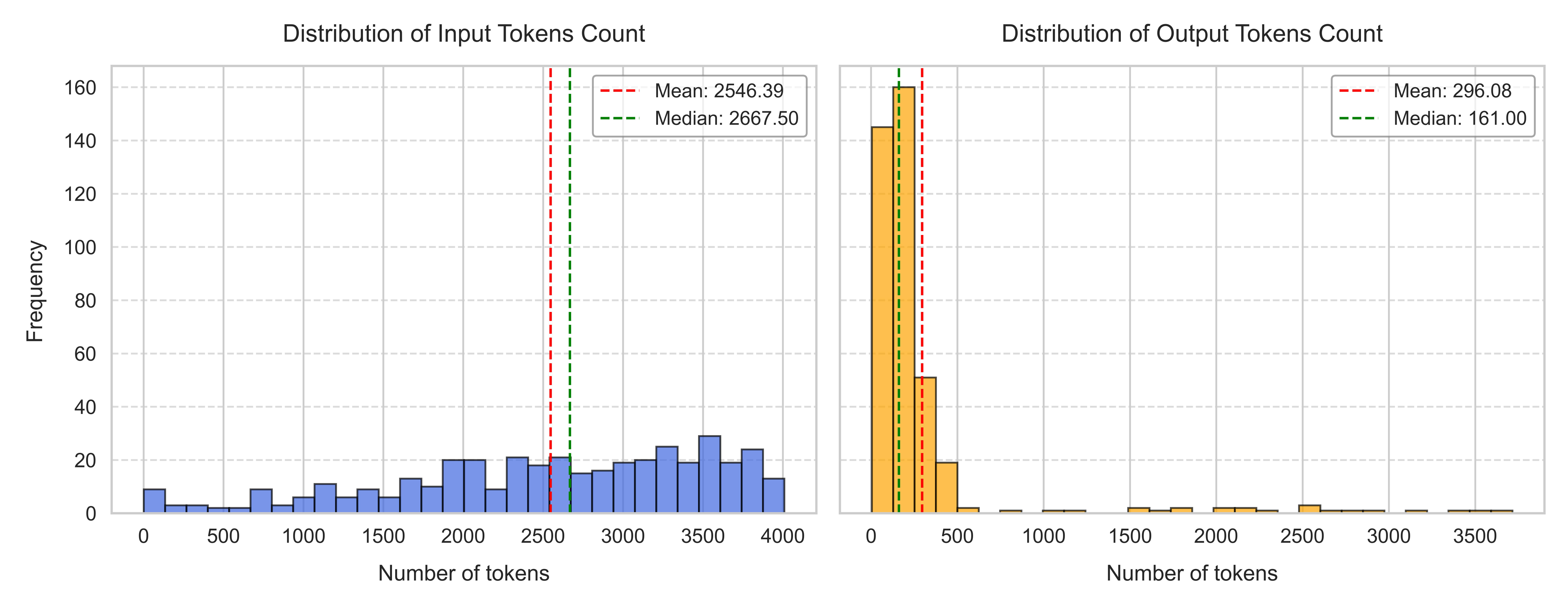}
\caption{\textbf{Upper: } Distribution of the number of tokens of input prompt and output response respectively in the data \cite{zheng2023lmsys}. \textbf{Lower: } Distribution of the number of tokens of input prompt and output response respectively in the data \cite{cohan2018discourse}.} 
\label{fig:distribution1}
\end{figure}

The second dataset, constructed by \cite{cohan2018discourse}, focuses on long-form summarization of academic papers from Arxiv (available at \url{https://github.com/armancohan/long-summarization}). Here, prompts are significantly longer, with a mean input length of 2546.39 tokens (median: 2667.50) and a mean output length of 296.08 tokens (median: 161.00), illustrated in Figure \ref{fig:distribution1}.  

From these datasets, we randomly select 1600 short conversations and 400 long summarization tasks (we also test other mixing ratio ablations in this section). By merging them with a random permutation, we create a mixed dataset of 2000 samples. Figure \ref{fig:distribution3} reveals the resulting distribution: the input lengths now exhibit a mean of 542.75 tokens (median: 18.00), while outputs have a mean of 127.50 tokens, demonstrating the desired variability in prompt sizes.

\xhdr{Experiment Setup. }
We evaluate performance using average end-to-end latency per request, defined as the total latency across all requests divided by the number of requests. To address randomness in workload sampling and request ordering, all main experiments are repeated over five random seeds. We report the mean and standard deviation across these seeds.

For benchmarking, we compare against the following scheduling policies:
\begin{enumerate}
    \item \texttt{FCFS}: Requests are processed in the random arrival order. Each batch is filled maximally without exceeding the KV-cache memory limit, assuming perfect knowledge of output lengths.
    \item \mcsdf{}: As described in Section~\ref{subsec:sf}, this policy prioritizes requests with shorter output lengths.
    \item \sortLP{}: Introduced in Section~\ref{sec:extension}, \sortLP{} solves a relaxed linear program to estimate request starting times and batches requests in ascending order of these values.
    \item \LPSwap{}: The hybrid method outlined in Section~\ref{sec:extension}, combining \sortLP{} ordering with F-metric-guided local swaps.
    \item \sortF{} variants: Since exact dynamic programming is computationally infeasible for the main experiment with \(n=2000\), we implement \sortF{} using the Local Swap Search and Quantile Greedy Selection Phase~1 solvers described in Section~\ref{sec:approximation}.
\end{enumerate}

For scalability analysis, we evaluate latency trends by subsampling the mixed dataset at \(n=200,400,\ldots,2000\). The baseline workload composition follows the main mixed dataset described above, with \(80\%\) short conversational requests and \(20\%\) long summarization requests. To evaluate when heterogeneous scheduling matters most, we further vary the short/long workload composition across \(100/0\), \(80/20\), \(50/50\), and \(20/80\). We keep the KV-cache memory budget fixed at \(M=16{,}492\) tokens, corresponding to the LLaMA2-70B deployment on two A100 GPUs considered in this section. This reflects the fact that \(M\) is determined by the hardware and model configuration, while the effective memory pressure varies with workload composition.

Batch processing times are estimated using the linear-regression timing model in the Vidur simulator from \cite{agrawal2024vidur}. In addition to average latency, we report p95 and p99 latency to evaluate tail performance. Finally, to assess the empirical tightness of the worst-case approximation bound, we solve the LP relaxation exactly on smaller tractable instances and compare the total end-to-end latency of each heuristic against the LP lower bound.

\xhdr{Main Results. }
Figure~\ref{fig:main_latency_variance} reports average end-to-end latency per request under the \(80/20\) short/long mixture, with error bars representing one standard deviation over five seeds. The ranking of algorithms is stable across random seeds. At \(n=2000\), \sortF{} with Local Swap Search achieves the lowest average latency, \(154.1 \pm 2.3\) seconds, closely followed by \LPSwap{} with \(157.4 \pm 1.6\) seconds. Both substantially outperform \texttt{FCFS} (\(750.0 \pm 29.8\) seconds), \mcsdf{} (\(321.5 \pm 2.0\) seconds), \sortLP{} (\(235.1 \pm 1.6\) seconds), and \sortF{} with Quantile Greedy Selection (\(243.7 \pm 2.7\) seconds).

\begin{figure}[!h]
\center
\includegraphics[width=0.7\textwidth]{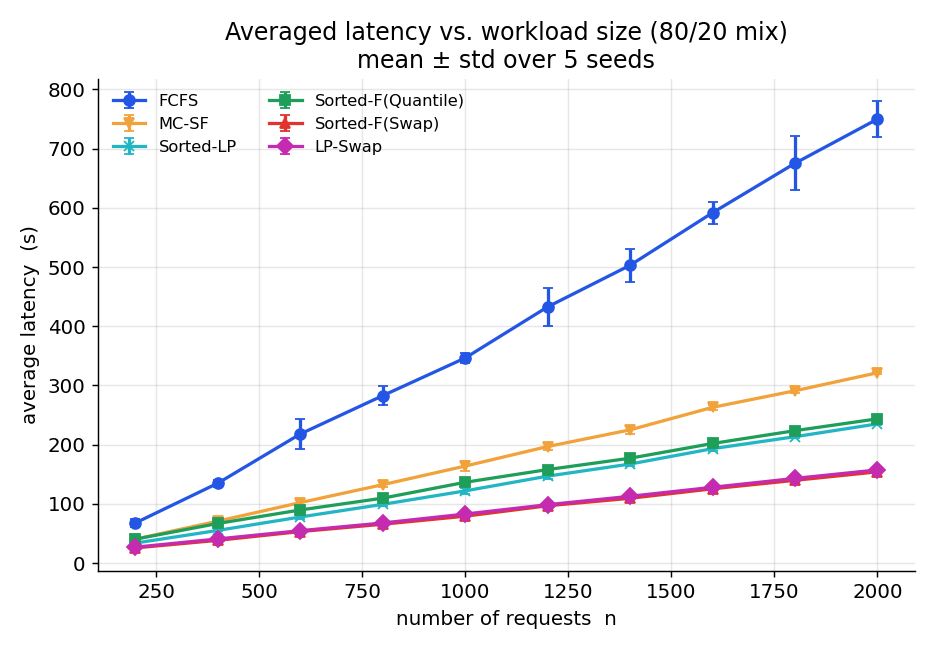}
\caption{Average end-to-end latency per request under the \(80/20\) short/long workload mixture. Error bars report mean \(\pm\) one standard deviation over five random seeds.}
\label{fig:main_latency_variance}
\end{figure}

These results reinforce the theoretical message of the paper. The inferior performance of \mcsdf{} shows that purely output-length-based prioritization is inadequate when prompt sizes vary substantially. In contrast, the F-metric explicitly balances batch cardinality against downstream decode cost, leading to consistently lower average latency. The close overlap between \sortF{} with Local Swap Search and \LPSwap{} indicates that the F-metric-guided refinement dominates the final schedule quality: after local swaps, the LP-based initialization provides little additional benefit. Since \sortF{} avoids solving the LP and has a simpler implementation, it remains the preferred practical method.

\xhdr{Sensitivity to Workload Composition. }
Figure~\ref{fig:composition_sensitivity} shows how the benefit of \sortF{} changes with the short/long workload mixture. The advantage of \sortF{} is largest under the mixed \(80/20\) workload, where \sortF{} with Local Swap Search improves over \texttt{FCFS} by \(4.87\times\) and over \mcsdf{} by \(2.09\times\) at \(n=2000\). The speedup decreases when the workload becomes more homogeneous: the improvement over \texttt{FCFS} is \(3.36\times\) under the \(100/0\) workload, \(2.50\times\) under the \(50/50\) workload, and \(1.65\times\) under the \(20/80\) workload.

\begin{figure}[!h]
\center
\includegraphics[width=0.95\textwidth]{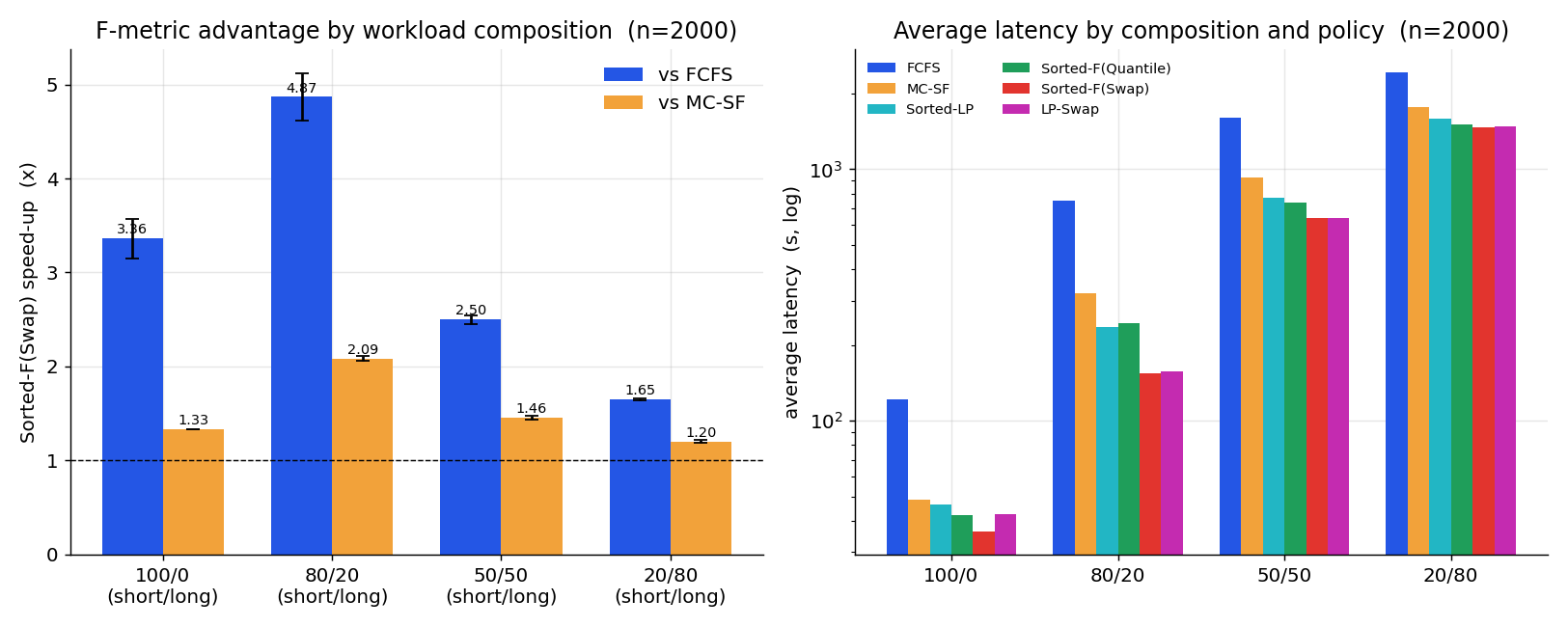}
\caption{Sensitivity of scheduling performance to workload composition at \(n=2000\). Left: speedup of \sortF{} with Local Swap Search over \texttt{FCFS} and \mcsdf{}. Right: average latency by policy and workload composition.}
\label{fig:composition_sensitivity}
\end{figure}

This pattern identifies the regime in which heterogeneous scheduling matters most. When all requests are short, most policies can pack requests efficiently. When long requests dominate, memory becomes the binding bottleneck for nearly all policies, leaving less room for scheduling improvements. The F-metric is most valuable in mixed workloads, where the scheduler must trade off many short requests against a smaller number of memory-intensive long-context requests. This observation provides a practical diagnostic: operators should expect the largest gains from memory-aware scheduling when request lengths have high dispersion and the server operates close to its KV-cache capacity.

\xhdr{LP Relaxation Lower Bound. }
To assess the empirical tightness of the worst-case approximation guarantee, we solve the LP relaxation exactly on smaller instances where the relaxation is computationally tractable. Since the LP relaxation provides a lower bound on the optimal offline total latency, the ratio between the TEL achieved by \sortF{} with Local Swap Search and the LP objective provides an empirical measure of its optimality gap.

\begin{figure}[!h]
\center
\includegraphics[width=0.6\textwidth]{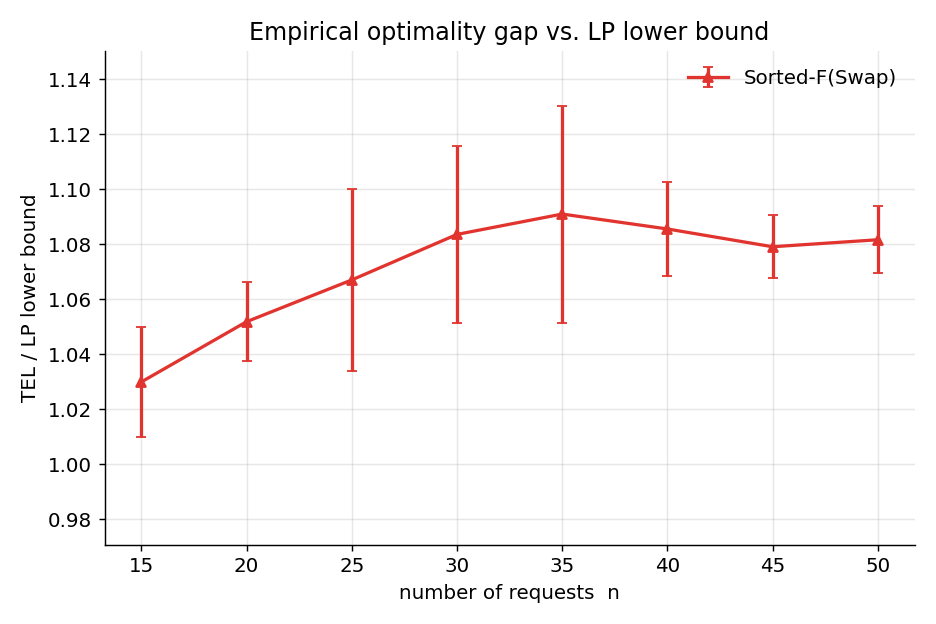}
\caption{Empirical optimality gap of \sortF{} with Local Swap Search relative to the LP relaxation lower bound on small tractable instances.}
\label{fig:lp_gap}
\end{figure}

As shown in Figure~\ref{fig:lp_gap}, the empirical gap remains close to \(1\) across the tested instance sizes. In particular, \sortF{} with Local Swap Search stays within approximately \(1.03\)--\(1.09\) of the LP lower bound on average. These results suggest that, although the constant \(48\) in Theorem~\ref{thm:cr} provides a worst-case theoretical guarantee, the practical performance of the F-metric-based scheduler is much closer to the LP lower bound in the tested regimes.

\xhdr{Tail Latency. }
Figure~\ref{fig:tail_latency} in Appendix \ref{append:num} reports p99 latency as the workload size grows and compares average, p95, and p99 latency across policies at \(n=2000\). \sortF{} with Local Swap Search substantially reduces average latency and also improves p95 latency relative to \texttt{FCFS}. At \(n=2000\), its p95 latency is \(953.2\) seconds, compared with \(1413.9\) seconds for \texttt{FCFS}.


\xhdr{Scheduling Runtime. }
Figure~\ref{fig:runtime} reports the ordering-construction time of each policy. All \sortF{} variants remain computationally lightweight at the main experimental scale. Under the \(80/20\) mixture with \(n=2000\), Quantile Greedy Selection takes approximately \(0.053\) seconds and Local Swap Search takes approximately \(0.073\) seconds to construct the scheduling order. This runtime is small relative to the total serving horizon, suggesting that F-metric-based scheduling can be implemented as a low-overhead decision layer in backlogged serving regimes.

\begin{figure}[!h]
\center
\includegraphics[width=0.65\textwidth]{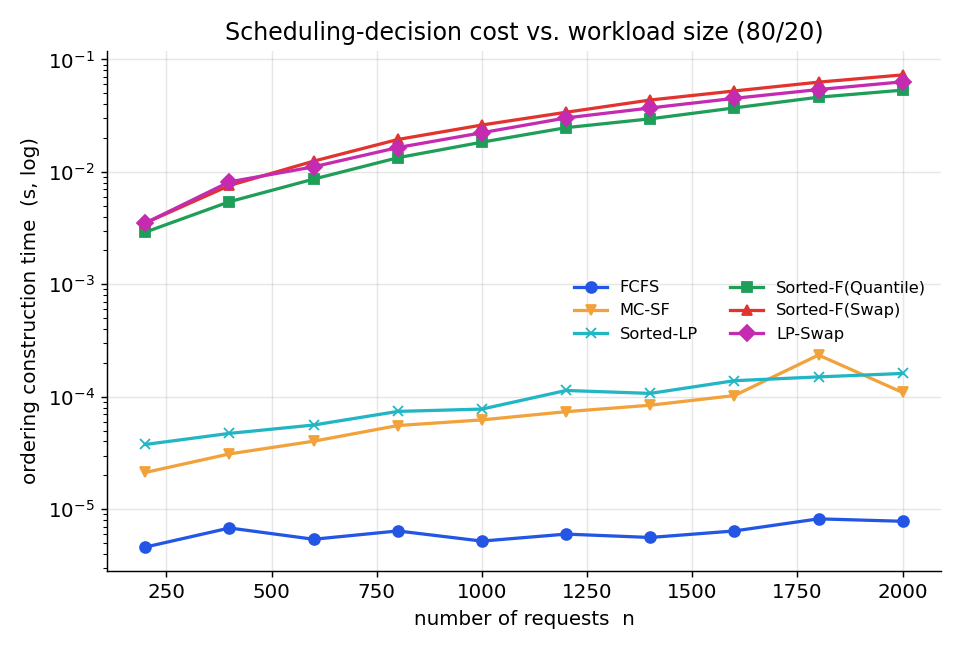}
\caption{Scheduling-order construction time under the \(80/20\) workload mixture. The y-axis is shown on a logarithmic scale.}
\label{fig:runtime}
\end{figure}

\xhdr{Robustness to prefill weighting.} To test whether explicitly charging
prefill cost in the selection metric improves scheduling, we evaluate the
generalized metric $G(\mathcal{X})=\sum_i(\alpha s_i+o_i)/|\mathcal{X}|^2$, where
$\alpha$ is the weight of a prefill token relative to a decode token and
$\alpha=0$ recovers the $F$-metric (Figure \ref{fig:sortg}). When $\alpha$ is calibrated to
the true relative compute cost of prefill in our timing model---the
\emph{compute-matched} value $\alpha\approx 0.013$, reflecting that prefill
ingests tokens in parallel while decode is sequential---Sorted-G is statistically
indistinguishable from Sorted-F over five seeds: at the prompt lengths in our
workload (up to a few thousand tokens), prefill compute is a second-order effect
and the simpler decode-based metric loses nothing. Latency degrades as $\alpha$
rises above its compute-matched value: by about $5.5\%$ at the
\emph{memory-matched} value $\alpha=1$, where $G$ orders by total sequence length
$s_i+o_i$ (consistent with our negative result that total-size prioritization is
not robust), and by up to $\sim$$19\%$ for $\alpha\ge 5$, where the ordering is
dominated by prefill size and the decode length governing occupancy is
effectively ignored; the degradation saturates because the induced ordering no
longer depends on $\alpha$ once $\alpha s_i$ dominates $o_i$. These results support
the decode-centric modeling choice in the regime we study.

\subsection{Experiment under Online Poisson Arrivals} \label{sec:num_online}
Finally, we conduct a preliminary online experiment to evaluate whether the F-metric remains useful when requests arrive over time. This experiment is intended as an empirical stress test of the scheduling principle rather than an online competitive-ratio analysis. Requests arrive according to a Poisson process over a horizon of \(T=4000\) iterations. Each arriving request is sampled from the same \(80/20\) short/long mixture used in the main offline experiment. We consider arrival rates \(\lambda\in\{0.05,0.07,0.09,0.11,0.13,0.15,0.18,0.21,0.25,0.30\}\), corresponding to normalized loads \(\rho=\lambda/\lambda^*\) ranging from \(0.48\) to \(2.88\), where \(\lambda^*=0.104\) is the memory-bound throughput estimate. Arrivals stop at time \(T\), and the system is then drained to completion, so all policies are evaluated on the same realized request set for each seed.

We compare three online policies. \texttt{FCFS} admits requests in arrival order. \mcsdf{} recomputes the shortest-decode-first priority over the current backlog at every iteration. \sortF{}-Online is the receding-horizon policy described in Section~\ref{sec:extension}: it recomputes the F-based priority order every \(10\) iterations, or earlier when the cached order is exhausted or when newly arrived pending requests account for at least \(50\%\) of the backlog size at the previous recomputation. The F-based planning step is capped at a look-ahead window of \(300\) pending requests, and feasibility is checked against the live residual KV-cache profile at every iteration.

\begin{figure}[!h]
\center
\includegraphics[width=0.65\textwidth]{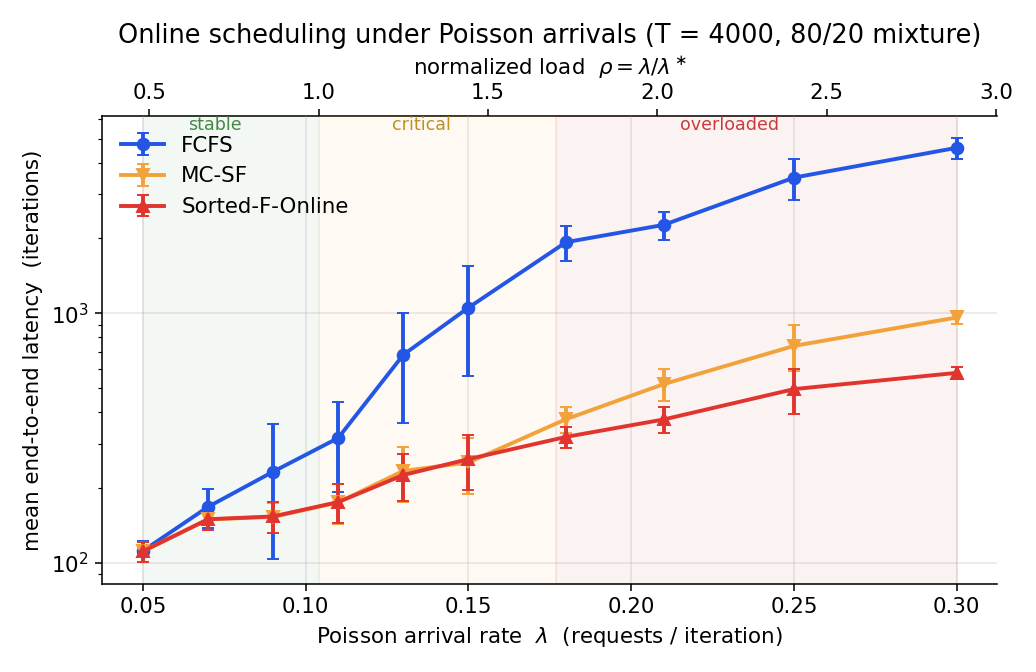}
\caption{Online scheduling under Poisson arrivals. The y-axis reports mean end-to-end latency in iterations, with error bars over five random seeds. The top axis shows normalized load \(\rho=\lambda/\lambda^*\).}
\label{fig:online_latency}
\end{figure}

Figure~\ref{fig:online_latency} shows that the benefit of \sortF{}-Online is regime-dependent. In the stable and lightly critical regimes, where the backlog is shallow, \sortF{}-Online and \mcsdf{} perform similarly because there is limited scheduling flexibility. As the system becomes overloaded, however, the backlog becomes deep enough for batch selection to matter. In this regime, \sortF{}-Online consistently improves over \mcsdf{} and dramatically outperforms \texttt{FCFS}. For example, when \(\rho=1.73\), \sortF{}-Online reduces mean latency by \(15.1\%\) relative to \mcsdf{}; when \(\rho=2.88\), the reduction grows to \(40.0\%\). Relative to \texttt{FCFS}, the reduction is between \(83\%\) and \(88\%\) in the overloaded regime.

Figure~\ref{fig:online_gap} in Appendix \ref{append:num} further illustrates that the advantage of F-metric-based scheduling grows with overload. This finding is consistent with the motivation of our offline model: during peak-demand or transient-overload periods, the system may face a substantial backlog, and choosing the right feasible batches becomes more important than simply prioritizing short outputs. At the same time, the experiment also highlights the limitation of the current paper. In stable regimes, where requests can often be served shortly after arrival, sophisticated backlog scheduling provides little additional benefit over a strong online \mcsdf{} baseline. A formal online model, including competitive-ratio analysis or wall-clock evaluation under production traces, remains an important direction for future work.

\section{Conclusion} \label{sec:conclusion}

This paper studies offline scheduling for LLM inference on a single worker under a fixed KV-cache memory budget. Motivated by mixed workloads with heterogeneous prefill and decode lengths, we show that relaxing the uniform-input-size assumption fundamentally alters the problem: minimizing total end-to-end latency becomes NP-hard, and natural priority rules can all incur unbounded approximation ratios. Variable prefill lengths thus cannot be treated as a minor modeling detail in memory-constrained LLM serving.

To address this, we propose \sortF{}, an offline algorithm built on a batch-level quality metric \(F(\mathcal{X})\) that balances batch cardinality against downstream decode cost. When the Phase~1 batch-selection subproblem is solved exactly, \sortF{} achieves a constant approximation ratio of at most 48. On mixed real-world workloads, the F-metric consistently lowers latency relative to standard baselines, and a preliminary Poisson-arrival experiment indicates that a receding-horizon variant remains effective in overloaded online regimes.

\subsection{Managerial Insights}

Our results suggest several operational implications for LLM serving systems with heterogeneous prefill and decode lengths.

First, heterogeneous scheduling matters most when request lengths are highly dispersed and memory is tight. If all requests have similar prefill sizes \(s_i\) and decode lengths \(o_i\), then simple rules such as shortest-output-first may already capture much of the scheduling structure. In contrast, when the coefficients of variation of \(s_i\) and/or \(o_i\) are large, a scheduler must trade off short jobs, large-prefill jobs, and long-decode jobs competing for the same KV-cache budget. This is precisely the regime in which myopic rules can become misleading: a request with a short decode length may occupy a large amount of memory, while a request with a long decode length may be cheap to keep active. Practically, this suggests that operators should pay particular attention to workload mixes combining short conversational queries, long-context document tasks, and summarization requests, especially when the worker operates close to its memory capacity.

Second, the choice of the Phase~1 solver should depend on the scale and time sensitivity of the deployment. The exact dynamic programming implementation is most appropriate for small or moderate backlogs where scheduling quality is important and the additional computation can be afforded. implementation of \sortF{}. Local Swap Search provides a useful middle ground: it is substantially more scalable than exact dynamic programming and often improves the initial greedy selection. Quantile Greedy Selection is the most suitable option for large-scale, latency-critical deployments because of its linear-time complexity. Thus, a practical deployment can use exact dynamic programming for small backlogs, Local Swap Search when local refinement is computationally affordable, and Quantile Greedy Selection for large peak-hour backlogs. This also suggests a hybrid implementation: the system can switch among the three solvers according to the current queue length, the available scheduling time budget, and the service-level objective.

Third, output-length prediction is valuable, but its value should be interpreted through the scheduler's robustness to prediction error. A highly accurate prediction model can move \sortF{} closer to the oracle setting and improve the quality of the Phase~1 ordering. Nevertheless, prediction accuracy should be evaluated jointly with the scheduling policy, rather than in isolation. If the prediction model is costly to train or maintain, the relevant question is whether its improved ordering leads to meaningful latency reduction relative to robust adaptive methods such as \(\sortF{}+\mathcal{A}_{\min}\). From an operational perspective, this suggests that practitioners should assess the value of prediction models by measuring their downstream impact on latency, rather than only by prediction error metrics.

\subsection{Limitations and Future Work}

A key limitation of the present work is that our theoretical model and approximation guarantee are developed in an offline setting, where all requests are assumed to be available at time \(t=0\). This setting is motivated by peak-hour or tidal-demand scenarios in which a server faces an accumulated backlog of pending requests, but it does not fully capture online serving environments where requests arrive sequentially and decisions must be made without knowledge of future arrivals. In Section~\ref{sec:num}, we provide a preliminary Poisson-arrival experiment for a receding-horizon version of \sortF{}, showing that the \(F\)-metric remains useful in overloaded online regimes. Nevertheless, a formal online theory remains beyond the scope of this paper. Developing online performance guarantee is an important direction for future work. 

Several additional directions remain open. First, our constant-factor guarantee applies to the idealized version of \sortF{} in which the Phase~1 batch-selection problem is solved exactly. The local swap and quantile greedy variants are designed for computational efficiency and perform well empirically, but their worst-case approximation properties remain to be characterized. Second, our LP relaxation provides a useful lower bound and ordering signal, but the integrality gap of the relaxation and the performance of LP-based rounding procedures are not yet fully understood. Finally, although our practical methods can incorporate inaccurate output-length predictions, the theoretical analysis assumes known decode lengths. Developing robust or learning-augmented scheduling algorithms with provable guarantees under prediction uncertainty would further improve the applicability of the model.

\bibliographystyle{ACM-Reference-Format}
\bibliography{reference}

\ECSwitch

\ECHead{Appendix: Proofs of Statements}

\section{Other Related Work} \label{sec:other}

\xhdr{LLM Inference. } Improving the efficiency of LLM inference is crucial due to the substantial computational and financial costs it incurs. A large body of existing work focuses on practical implementation techniques to accelerate inference in deployed systems, often without accompanying theoretical guarantees. For instance, \citet{patel2023splitwise} propose system architectures that process prompt and token requests independently, while other works such as \citet{yu2022orca, agrawal2023sarathi, agrawal2024taming} introduce designs that jointly handle prompt and token requests—a structure aligned with the one studied in this paper.

In contrast, a few recent studies aim to develop theoretical foundations for LLM inference \cite{zhou2026position}. The primary mathematical model considered in this paper builds on the framework introduced by \citet{jaillet2025online}. \citet{ao2025optimizing} examine inference scheduling under multiple objectives and propose algorithms with provable performance guarantees. Additionally, \citet{chen2025adaptively,chen2026universal,bu2026tackling} study the scheduling optimization with unknown decode lengths, and \citet{nie2026queueing} analyze the stability conditions of LLM inference when operating on a single computational worker.

\xhdr{Scheduling. } The scheduling problem—extensively studied in the literature \citep{allahverdi2008survey, chen1998review, brucker1999resource, albers2009online}—involves a decision-maker tasked with processing a large number of incoming requests, either one-by-one or in batches. The challenge lies in determining the optimal order and timing of processing, which is often formulated as an integer optimization problem. However, due to the computational complexity and the impracticality of solving large-scale instances exactly, researchers have developed fast approximation algorithms and heuristics.

In our setting, which is motivated by LLM inference, jobs can be processed in batches. This aligns our problem more closely with batch scheduling models studied in \citep{brucker1998scheduling, chen2008logistics, li2020online}. Furthermore, LLM inference imposes a sequential structure on token generation—future tokens cannot be processed before current ones—introducing a form of precedence constraint. Related work on scheduling with precedence constraints includes \citep{shahout2024don, schedPrecedence}. Nevertheless, our setting is fundamentally different due to the unique characteristics of the KV cache, which introduces new constraints and tradeoffs not captured by traditional precedence scheduling models.

\section{Supplementary Materials for Section \ref{sec:model}} \label{append:model}

\begin{proof}{Proof of Theorem \ref{thm:benchmark}: }
Consider the following instance \( \mathcal{I} \) with two types of requests:
    \begin{itemize}
        \item Type 1: each request has size \( s = \sqrt{M} - 1 \), output length \( o = 1 \), and there are \( X = M \) such requests.
        \item Type 2: each request has size \( s = 1 \), output length \( o = 2 \), and there are \( Y = M^{1.25} \) such requests.
    \end{itemize}

    Under \mcsdf, which processes all type 1 requests before any type 2 request (since type 1 requests are shorter), the total expected latency is
    \[
    \text{TEL}(\mcsdf; \mathcal{I}) = \sqrt{M} \sum_{i=1}^{X / \sqrt{M}} i + \frac{XY}{\sqrt{M}} + \frac{2M}{3} \sum_{j=1}^{3Y/M} j.
    \]

    Now consider an alternative schedule \( \Lambda \) that processes all type 2 requests before any type 1 request. Its total latency is
    \[
    \text{TEL}(\Lambda; \mathcal{I}) = \frac{2M}{3} \sum_{j=1}^{3Y/M} j + \frac{6XY}{M} + \sqrt{M} \sum_{i=1}^{X / \sqrt{M}} i.
    \]

    We compare the two total latencies:
    \[
    \frac{\text{TEL}(\mcsdf; \mathcal{I})}{\text{TEL}(\Lambda; \mathcal{I})}
    = \frac{M^{1.75} + \frac{7}{2} M^{1.5} + M^{1.25} + \frac{1}{2} M}{\frac{7}{2} M^{1.5} + 7 M^{1.25} + \frac{1}{2} M} 
    \geq \frac{1}{11} M^{0.25}.
    \]

    Hence, the approximation ratio of \mcsdf \text{} grows unboundedly as \( M \to \infty \), implying that \( \text{AR}(\mcsdf) \to \infty \). 
    \Halmos
\end{proof}

\begin{theorem} \label{thm:benchmark2}
    Suppose that each request $i \in [n]$ satisfies $s_i \in [1,M]$, $o_i \in [1,M]$ and $s_i+o_i \leq M$, then Algorithm $\mcsdf_2$  (process the smallest $s_i+o_i$ first) achieves an unbounded approximation ratio. Specifically, as \( M \to \infty \), we have
    \[
 \text{AR}(\mcsdf_2) \to \infty.
    \]
\end{theorem}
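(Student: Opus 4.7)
The plan is to mirror the adversarial construction used in the proof of Theorem~\ref{thm:benchmark}, but with a two-type instance tailored to the sorting criterion of $\mcsdf_2$ (smallest $s_i+o_i$ first). The intuition is that $\mcsdf_2$ should be forced to prioritize requests whose total memory footprint is small but whose output length dominates their prompt, so that they are drained in many thin, time-consuming batches while a mass of ``large-$s$, small-$o$'' requests wait. The latter, if scheduled first, would clear almost instantly in a handful of batches.

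Concretely, I would introduce Type~1 requests with $(s_1, o_1) = (1, a)$, so $s_1+o_1 \approx a$, and Type~2 requests with $(s_2, o_2) = (b, 1)$, so $s_2+o_2 \approx b$, with $a < b$ so that $\mcsdf_2$ orders all Type~1 before all Type~2. The natural batch sizes are $n_A \approx M/a$ and $n_B \approx M/b$, with per-batch processing times $a$ and $1$ respectively. With $Y$ Type~1 and $X$ Type~2 requests, a direct accounting of intrinsic completion costs plus the ``waiting'' cross terms yields
$$
\mathrm{TEL}(\mcsdf_2;\mathcal{I}) - \mathrm{TEL}(\Lambda;\mathcal{I}) \;=\; \frac{XY(a^2 - b)}{M},
$$
where $\Lambda$ is the alternative schedule that reverses the batch order. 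Hence $\mcsdf_2$ incurs an additive penalty whenever $b < a^2$.

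To convert this gap into a divergent ratio, I would take $a = M^{1/3}$ and $b = M^{1/2}$ (so $a < b < a^2$), together with $Y = M$ and $X = M^{13/12}$. A routine computation gives $\mathrm{TEL}(\Lambda;\mathcal{I}) = \Theta(M^{5/3})$, since both intrinsic terms $Y^2 a^2/M$ and $X^2 b/M$ equal $\Theta(M^{5/3})$ while the cross term $XYb/M = M^{19/12}$ is subdominant. In contrast, the penalty term $XY a^2/M = M^{7/4}$ dominates $\mathrm{TEL}(\mcsdf_2;\mathcal{I})$, yielding a ratio of order $M^{1/12}$. Since $\mathrm{TEL}(\opt;\mathcal{I}) \leq \mathrm{TEL}(\Lambda;\mathcal{I})$, this forces $\mathrm{CR}(\mcsdf_2) \to \infty$ as $M \to \infty$.

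The main obstacle is calibrating $(a, b, X, Y)$ so that (i) $\mcsdf_2$ prioritizes Type~1, i.e., $a < b$; (ii) the penalty gap is positive, i.e., $b < a^2$; and (iii) the cross term $XY a^2/M$ dominates \emph{both} the self-cost $Y^2 a^2/M$ \emph{and} the symmetric cross term $XYb/M$ in $\Lambda$, which forces the constraint region $Y < X < Y a^2/b$. Pushing $X$ too large lets $X^2 b/M$ swamp the gap, and pushing it too small lets $Y^2 a^2/M$ wash out the asymmetry, so the admissible window is narrow but nonempty. Once the scaling tuple is fixed, the remaining work is verifying feasibility ($s_i+o_i \leq M$, $s_i, o_i \in [1,M]$) and absorbing the integer rounding in $\lceil Y/n_A \rceil$ and $\lceil X/n_B \rceil$ into the lower-order terms, both of which are routine in the regime $s_i, o_i = \epsilon(M)$.
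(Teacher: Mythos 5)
Your proof takes essentially the same approach as the paper: a two-type adversarial instance in which Type~1 has slightly smaller $s_i+o_i$ but much larger $o_i$, so that $\mcsdf_2$ is forced to drain Type~1 first at a large throughput penalty, and the lower bound on $\mathrm{TEL}(\opt)$ comes from the reversed-order schedule $\Lambda$. The paper's specific instance uses $(s,o)=(1,\sqrt{M}-1)$ and $(\sqrt{M},1)$ with $X=M$ and $Y=M^{1.5}$ requests respectively; yours uses $(1,M^{1/3})$ and $(M^{1/2},1)$ with counts $Y=M$ and $X=M^{13/12}$. Both are valid choices, and your arithmetic checks out: $\mathrm{TEL}(\Lambda)=\Theta(M^{5/3})$, $\mathrm{TEL}(\mcsdf_2)=\Theta(M^{7/4})$, giving a ratio of order $M^{1/12}\to\infty$.

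One substantive observation: your calibration is actually more careful than the paper's own. You explicitly work out the admissible window $Y<X<Ya^2/b$ required for the cross term $XYa^2/M$ to dominate both self-cost terms $Y^2a^2/M$ and $X^2b/M$, and verify your chosen $X=M^{13/12}$ sits strictly inside the window $(M, M^{7/6})$. The paper does not perform this check, and with its stated count $Y=M^{1.5}$ the Type-2 intrinsic cost $\sqrt{M}\sum_{j=1}^{Y/\sqrt{M}}j$ evaluates to $\Theta(M^{2.5})$ (not the stated $\tfrac{1}{2}M^{1.75}$), which is the \emph{same order} as the cross term $(\sqrt{M}-1)XY/\sqrt{M}$ and would cap the ratio at a constant rather than driving it to infinity. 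Reducing $Y$ (e.g.\ to $M^{1.25}$) repairs this; your window analysis is precisely the step that would have caught it. The price you pay is a weaker divergence rate ($M^{1/12}$ versus the paper's claimed $M^{1/2}$), but since any rate tending to infinity suffices, the theorem's conclusion holds under your parameters. You should still state explicitly (as the paper implicitly assumes) that under $\mcsdf_2$'s greedy break-on-first-failure admission rule, the $n_1$-th-plus-one Type-1 request fails the feasibility check at the peak $t'=o_1$, so the batch is sealed before any Type-2 request is ever considered; this justifies the clean ``all Type~1 first, all Type~2 second'' accounting in both proofs.
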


\begin{proof}{Proof of Theorem \ref{thm:benchmark2}: }
Consider the following instance \( \mathcal{I} \) with two types of requests:
    \begin{itemize}
        \item Type 1: each request has size \( s = 1 \), output length \( o = \sqrt{M}-1 \), and there are \( X = M \) such requests.
        \item Type 2: each request has size \( s = \sqrt{M} \), output length \( o = 1 \), and there are \( Y = M^{1.5} \) such requests.
    \end{itemize}

    Under $\mcsdf_2$, which processes all type 1 requests before any type 2 request (since type 1 requests are shorter), the total expected latency is
    \[
    \text{TEL}(\mcsdf_2; \mathcal{I}) = (\sqrt{M}-1)\sqrt{M} \sum_{i=1}^{X / \sqrt{M}} i + (\sqrt{M}-1)\frac{XY}{\sqrt{M}} + \sqrt{M} \sum_{j=1}^{Y/\sqrt{M}} j.
    \]

    Now consider an alternative schedule \( \mathcal{A} \) that processes all type 2 requests before any type 1 request. Its total latency is
    \[
    \text{TEL}(\mathcal{A}; \mathcal{I}) = (\sqrt{M}-1)\sqrt{M} \sum_{i=1}^{X / \sqrt{M}} i + \frac{XY}{\sqrt{M}} + \sqrt{M} \sum_{j=1}^{Y/\sqrt{M}} j.
    \]

    We compare the two total latencies:
    \[
    \frac{\text{TEL}(\mcsdf_2; \mathcal{I})}{\text{TEL}(\mathcal{A}; \mathcal{I})}
    = \frac{\frac{1}{2} M^{2} +  M^{2.5} + \frac{1}{2}M^{1.75}}{\frac{1}{2} M^{2} +  M^{2} + \frac{1}{2}M^{1.75}} 
    \geq \frac{2}{3} M^{0.5}.
    \]

    Hence, the approximation ratio of $\mcsdf_2$  grows unboundedly as \( M \to \infty \), implying that \( AR(\mcsdf_2) \to \infty \). 
    \Halmos
\end{proof}

\begin{proof}{Proof of Theorem~\ref{thm:latencynp}.}
We prove the result via a reduction from the restricted 3-Partition problem. Recall that 3-Partition remains strongly NP-hard even when the input integers \(x_i\) satisfy \(T/4 < x_i < T/2\) for all \(i\); see \cite{gareyjohnson1979}, Problem~SP15.

Consider the following 3-Partition problem: given a multiset of integers
\(X=\{x_1,\ldots,x_{3m}\}\) summing to \(mT\), where each \(x_i\) satisfies
\(T/4 < x_i < T/2\), the problem asks whether \(X\) can be partitioned into
\(m\) disjoint subsets \(S_1,\ldots,S_m\) such that the sum of each subset equals \(T\).

Then, we construct the following reduction: given an instance of 3-Partition, we construct an instance as follows:
\begin{itemize}
    \item For each integer \(x_i\), create a request \(i\) with \(s_i=x_i\) and \(o_i=1\).
    \item Set the memory capacity \(M=T+3\).
\end{itemize}

This construction satisfies the conditions in Theorem~\ref{thm:latencynp}: for every request \(i\), we have \(s_i=x_i\in[1,M]\), \(o_i=1\in[1,M]\), and
\[
s_i+o_i=x_i+1<T/2+1\leq T+3=M.
\]

We show that the 3-Partition instance has a solution if and only if the constructed instance has a schedule with total end-to-end latency
\[
\mathrm{TEL}=\frac{3m(m+1)}{2}.
\]

\textit{Case 1: 3-Partition exists.}
Suppose \(X\) can be partitioned into \(m\) subsets \(S_1,\ldots,S_m\), each summing to \(T\). Then, consider the schedule which processes batch \(k\) with all requests in \(S_k\).
The schedule is feasible since each subset contains exactly three elements, and the memory usage of batch \(k\) is
\[
\sum_{i\in S_k}(s_i+1)
=
\sum_{i\in S_k}(x_i+1)
=
T+3
=
M.
\]
Moreover, as each batch processes exactly three requests due to \(T/4 < x_i < T/2\), the total end-to-end latency is
\[
\mathrm{TEL}
=
\sum_{t=1}^m 3t
=
\frac{3m(m+1)}{2}.
\]

\textit{Case 2: No 3-Partition exists.}

Equivalently, suppose that a schedule achieves
\[
\mathrm{TEL}=\frac{3m(m+1)}{2}.
\]
We show that this implies a valid 3-partition. First, no feasible batch can contain four or more requests, since any four requests would require memory strictly greater than
\[
4\left(\frac{T}{4}+1\right)
=
T+4
>
T+3
=
M.
\]
Hence, each batch can contain at most three requests. Since there are \(3m\) requests in total, any schedule has total end-to-end latency at least
\[
\sum_{t=1}^m 3t
=
\frac{3m(m+1)}{2},
\]
with equality only if exactly three requests complete at each time \(t=1,\ldots,m\).

Therefore, if a schedule achieves \(\mathrm{TEL}=\frac{3m(m+1)}{2}\), it must process exactly three requests in each of the first \(m\) batches. Let \(B_k\) denote the set of three requests processed in batch \(k\). Feasibility gives
\[
\sum_{i\in B_k}(x_i+1)
\leq
T+3,
\]
and hence
\[
\sum_{i\in B_k}x_i
\leq
T.
\]
Since the \(m\) batches together contain all \(3m\) requests and
\[
\sum_{i=1}^{3m}x_i=mT,
\]
the above inequalities must all hold with equality. Thus,
\[
\sum_{i\in B_k}x_i=T,
\qquad k=1,\ldots,m.
\]
Therefore, the sets \(B_1,\ldots,B_m\) form a valid 3-partition.

Since the minimal total end-to-end latency is \(\frac{3m(m+1)}{2}\) if and only if the 3-Partition instance has a solution, the problem of minimizing total end-to-end latency is NP-hard.
\Halmos
\end{proof}

\subsection{Notation Table} \label{append:notation}

\begin{longtable}{p{0.22\textwidth}p{0.70\textwidth}}
\caption{Summary of Main Notation}
\label{tab:notation}\\
\toprule
\textbf{Notation} & \textbf{Meaning} \\
\midrule
\endfirsthead

\toprule
\textbf{Notation} & \textbf{Meaning} \\
\midrule
\endhead

\multicolumn{2}{l}{\textit{Instance, requests, and schedules}}\\
\(\mathcal{I}\) & Set of all requests in the instance. \\
\(n\) & Number of requests, \(n=|\mathcal{I}|\). \\
\(r_i\) & Request \(i\). \\
\(s_i\) & Prefill size, i.e., the number of input tokens of request \(i\). \\
\(o_i\) & Decode length, i.e., the number of output tokens generated by request \(i\). \\
\(M\) & KV-cache memory capacity of the worker. \\
\(L(M)\) & Peak per-request memory, \(L(M):=\max_{i\in[n]}\{s_i+o_i\}\). \\
\(t\) & Discrete time index. \\
\(p_i\) & Start time of request \(i\). \\
\(c_i(\Lambda)\) & Completion time of request \(i\) under schedule \(\Lambda\). \\
\(\Lambda\) & A generic schedule or scheduling policy. \\
\(\textup{\opt}\) & An optimal offline schedule. \\
\(\mathrm{TEL}(\Lambda;\mathcal{I})\) & Total end-to-end latency of schedule \(\Lambda\) on instance \(\mathcal{I}\). \\
\(\mathrm{AR}(\Lambda)\) & Approximation ratio of schedule \(\Lambda\). \\

\midrule
\multicolumn{2}{l}{\textit{Batch execution and memory feasibility}}\\
\(\mathcal{B}_t\) & Batch of tokens processed at time \(t\). \\
\(a_i\) & Output-token index of request \(i\) processed in a batch. \\
\(\mathcal{R}^{(t)}\) & Requests not yet started by time \(t\). \\
\(\mathcal{V}^{(t)}\) & Requests currently in progress at time \(t\). \\
\(\mathcal{U}^{(t)}\) & Requests newly admitted at time \(t\). \\
\(U\) & Candidate subset of pending requests considered for admission. \\
\(t_{\max}(U)\) & Latest completion time among requests in \(U\), assuming they start at time \(t\). \\
\(\mathcal{I}'\) & Ordered request list constructed by Phase~1 of \(\sortF{}\). \\

\midrule
\multicolumn{2}{l}{\textit{\(\sortF{}\) metric and Phase~1 batches}}\\
\(\mathcal{X}\) & Generic feasible request subset or candidate batch. \\
\(F(\mathcal{X})\) & Batch quality metric, \(F(\mathcal{X})=\sum_{r_i\in\mathcal{X}}o_i/|\mathcal{X}|^2\). \\
\(\bar{o}(\mathcal{X})\) & Average decode length of requests in \(\mathcal{X}\). \\
\(\Phi(\mathcal{X})\) & Throughput density, \(\Phi(\mathcal{X})=|\mathcal{X}|/\bar{o}(\mathcal{X})=1/F(\mathcal{X})\). \\
\(\mathcal{X}_k\) & The \(k\)-th batch selected in Phase~1 of \(\sortF{}\). \\
\(n_k\) & Number of requests in \(\mathcal{X}_k\). \\
\(r_{k,j}\) & The \(j\)-th request in batch \(\mathcal{X}_k\). \\
\(o_{k,j}\) & Decode length of request \(r_{k,j}\). \\
\(p_{k,j}\) & Start time of request \(r_{k,j}\) under \(\sortF{}\). \\
\(p'_{k,j}\) & Start time of request \(r_{k,j}\) in \(\textup{\sortF}_\mathrm{separate}\). \\

\midrule
\multicolumn{2}{l}{\textit{Proof transformations for \(\sortF{}\)}}\\
\(\textup{\sortF}_\mathrm{separate}\) & Analytical schedule obtained by separating Phase~1 batches. \\
\(\text{start}_k\) & Start time of batch \(k\) in \(\textup{\sortF}_\mathrm{separate}\). \\
\(\text{complete}_k\) & Completion time of batch \(k\) in \(\textup{\sortF}_\mathrm{separate}\). \\
\(\textup{\sortF}_\mathrm{group}\) & Analytical schedule obtained by grouping consecutive Phase~1 batches. \\
\(b_m\) & Terminal batch index of group \(m\). \\
\(\mathcal{Y}_m\) & Group \(m\), consisting of batches \(\mathcal{X}_{b_{m-1}+1},\ldots,\mathcal{X}_{b_m}\). \\
\(a_m\) & Number of Phase~1 batches in group \(\mathcal{Y}_m\). \\
\(\tau_m\) & Anchor time of group \(\mathcal{Y}_m\). \\
\(\eta_m\) & Average memory-utilization efficiency of group \(\mathcal{Y}_m\). \\
\(\textup{\sortF}_\mathrm{align}\) & Analytical schedule obtained by aligning response lengths in terminal batches. \\
\(\bar{o}_{b_m}\) & Average response length of the terminal batch \(\mathcal{X}_{b_m}\). \\

\midrule
\multicolumn{2}{l}{\textit{Proof transformation for \(\textup{\opt}\)}}\\
\(\textup{\opt}_\mathrm{align}\) & Analytical transformation of the optimal schedule. \\
\(\mathcal{H}_g\) & Requests alive at the beginning of temporal group \(g\) in the optimal schedule. \\
\(\delta_g\) & Length of temporal window \(g\). \\
\(\sigma_g\) & Cumulative time boundary, \(\sigma_g=\sum_{j=1}^{g}\delta_j\). \\
\(\mathcal{Z}_g\) & Set of requests completing in temporal window \(g\). \\
\(t_g\) & Scaled cutting time for temporal group \(g\). \\
\(\mathcal{W}_h\) & Batch \(h\) formed in the transformed optimal schedule. \\
\(d_g\) & Terminal batch index associated with temporal group \(g\). \\
\(\bar{o}_h\) & Average response length in batch \(\mathcal{W}_h\). \\

\midrule
\multicolumn{2}{l}{\textit{Exact, heuristic, and LP-based implementations}}\\
\(dp[k][m]\) & Dynamic-programming state: minimum total decode length among \(k\)-request batches using memory \(m\). \\
\(path[k][m]\) & Dynamic-programming path information used to reconstruct the selected request subset. \\
\(Q_p,Q_o\) & Quantile thresholds used in Quantile Greedy Selection. \\
\(\bar{T}\) & Upper bound on the time horizon in the integer program and LP relaxation. \\
\(x_{i,t}\) & IP/LP variable indicating whether request \(i\) starts at time \(t\). \\
\(x^*_{i,t}\) & Optimal fractional solution of the LP relaxation. \\
\(y_i\) & LP-induced average start time of request \(i\), \(y_i=\sum_{t\in[\bar{T}]}t\,x^*_{i,t}\). \\

\midrule
\multicolumn{2}{l}{\textit{Prediction uncertainty}}\\
\(\ell_i,u_i\) & Lower and upper prediction bounds for the decode length of request \(i\). \\
\(\gamma\) & Prediction-confidence parameter, \(\gamma:=\inf_i \ell_i/u_i\). \\
\(\tilde{o}_i\) & Running estimate of the output length of request \(i\). \\
\(\mathcal{A}_{\min}\) & Adaptive refinement policy that starts from lower-bound predictions. \\
\(\mathcal{A}_{\max}\) & Conservative policy that uses upper-bound predictions. \\

\midrule
\multicolumn{2}{l}{\textit{Online experiment}}\\
\(\lambda\) & Poisson arrival rate in the preliminary online experiment. \\
\(\lambda^*\) & Memory-bound throughput estimate used to normalize load. \\
\(\rho\) & Normalized load, \(\rho=\lambda/\lambda^*\). \\
\(T\) & Arrival horizon in the online experiment. \\
\bottomrule
\end{longtable}

\section{Supplementary Materials for Section \ref{sec:algorithm}} \label{append:alg}

\begin{proof}{Proof of Lemma \ref{lemma:output_balance}:}
Let $r_{i_{\max}} \in \arg\max_{r_i \in \mathcal{X}} o_i$ and define $\mathcal{X}^{-} := \mathcal{X}\setminus\{r_{i_{\max}}\}$. Since $\mathcal{X}^{-}$ is a feasible subset whenever $\mathcal{X}$ is feasible, the Phase~1 selection rule, minimizing $F(\cdot)$ over feasible subsets, implies
$$
F(\mathcal{X}^{-}) \geq F(\mathcal{X}),
$$
i.e.,
$$
\frac{\sum_{r_i \in \mathcal{X}} o_i - o_{\max}}{(|\mathcal{X}|-1)^2} \geq \frac{\sum_{r_i \in \mathcal{X}} o_i}{|\mathcal{X}|^2}.
$$
Rearranging the terms yields
$$
\frac{\sum_{r_i \in \mathcal{X}} o_i}{|\mathcal{X}|} \geq \frac{|\mathcal{X}|}{2|\mathcal{X}|-1} \cdot o_{\max} > \frac{1}{2} \cdot o_{\max}.
$$
\Halmos
\end{proof}

\subsection{Example \ref{exp2} for Scheduling Intuition} \label{append:example}

\begin{example} \label{exp2}
Consider a simple scenario with two batches of requests where requests are homogeneous within each batch but heterogeneous across batches, as shown in Table \ref{tab:metric_example}.
\renewcommand{\arraystretch}{1.2}
\begin{table}[htbp]
\centering
\caption{Batch Characteristics and Quality Metrics}
\label{tab:metric_example}
\begin{tabular}{|l|l|l|l|l|}
\hline
\textbf{Batch} & \textbf{Prompt Size} & \textbf{Response Length} & \textbf{Requests Number} & \textbf{Quality Metric} \\ \hline
1 & $s_1$ & $o_1$ & $n_1$ & $o_1/n_1$ \\ \hline
2 & $s_2$ & $o_2$ & $n_2$ & $o_2/n_2$ \\ \hline
\end{tabular}
\end{table}

Assume that both batches have the same total memory requirement: $n_1(s_1+o_1)=n_2(s_2+o_2)=M$. The scheduler must decide which batch to process first.

If batch 1 is prioritized, it completes at time $o_1$, contributing $o_1n_1$ to the total end-to-end latency (TEL). Batch 2 then completes at time $o_1 + o_2$, contributing $o_2n_2 + o_1n_2$ to TEL. Thus, the TEL is $o_1n_1 + o_2n_2 + o_1n_2$.

Conversely, if batch 2 is prioritized, it completes at time $o_2$, contributing $o_2n_2$ to TEL. Batch 1 then completes at time $o_1 + o_2$, contributing $o_1n_1 + o_2n_1$ to TEL. The TEL in this case is $o_1n_1 + o_2n_2 + o_2n_1$.

The difference between the two schedules lies in the cross terms: $o_1n_2$ versus $o_2n_1$. Therefore, if $o_1/n_1 < o_2/n_2$, then $o_1n_2 < o_2n_1$, so prioritizing batch 1 minimizes TEL. Otherwise, prioritizing batch 2 minimizes TEL. This decision rule aligns with selecting the batch with smaller $F(\mathcal{X})$, since $F(\mathcal{X}) = o_i/n_i$ for each batch $i$.
\end{example}

\subsection{Pseudocode of \sortF}

\begin{algorithm}[htbp]
\caption{\sortF} \label{algorithm_1}
\begin{algorithmic}[1]
\State \textbf{Phase 1: Batch Construction}
\State $\mathcal{I}' \gets []$
\While{$\mathcal{I} \neq \emptyset$}
    \State $\mathcal{X}^* \gets \arg\min_{\mathcal{X} \subseteq \mathcal{I}} (F(\mathcal{X}), -|\mathcal{X}|)$
    \Statex \hspace{1.7em} \textbf{subject to} $M(\mathcal{X}, o_j) \leq M,\ \forall r_j \in \mathcal{X}$
    \State Sort requests in $\mathcal{X}^*$ in nondecreasing order of $o_i$
    \State Append the ordered list $\mathcal{X}^*$ to the end of $\mathcal{I}'$
    \State $\mathcal{I} \gets \mathcal{I}\setminus \mathcal{X}^*$
\EndWhile

\State \textbf{Phase 2: Scheduling Execution}
\State $t \gets 0$, $\mathcal{R}^{(0)} \gets \mathcal{I}'$, $\mathcal{V}^{(0)} \gets \emptyset$
\While{$\mathcal{R}^{(t)} \neq \emptyset \ \textbf{or}\ \mathcal{V}^{(t)} \neq \emptyset$}
    \State $t \gets t + 1$
    \State $\mathcal{U}^{(t)} \gets \emptyset$
    \State $\mathcal{V}^{(t)} \gets \{r_j \in \mathcal{V}^{(t-1)} : p_j + o_j \ge t\}$
    \For{$r_i \in \mathcal{R}^{(t-1)}$ in order}
        \State \textbf{tentatively set} $p_i \gets t$
        \State $\widehat{\mathcal{U}} \gets \mathcal{U}^{(t)} \cup \{r_i\}$
        \If{$M(\mathcal{V}^{(t)} \cup \widehat{\mathcal{U}},\, p_j + o_j) \leq M,\ \forall r_j \in \mathcal{V}^{(t)} \cup \widehat{\mathcal{U}}$}
            \State $\mathcal{U}^{(t)} \gets \widehat{\mathcal{U}}$
        \Else
            \State \textbf{break}
        \EndIf
    \EndFor
    \State $\mathcal{R}^{(t)} \gets \mathcal{R}^{(t-1)} \setminus \mathcal{U}^{(t)}$
    \State $\mathcal{V}^{(t)} \gets \mathcal{V}^{(t)} \cup \mathcal{U}^{(t)}$
    \State \textsc{ProcessOneStep}$(\mathcal{V}^{(t)})$ 
\EndWhile
\end{algorithmic}
\end{algorithm}

\newpage

\section{Supplementary Materials for Section \ref{sec:proof}} \label{append:proof}

\begin{proof} {Proof of Theorem \ref{thm:separating}}
Let $p_{k,j}$ and $p'_{k,j}$ denote the initiation times of request $r_{k,j}$ under \sortF{} and $\textup{\sortF}_\mathrm{separate}$, respectively. It suffices to show
\begin{equation}\label{eq:pk-monotone}
p_{k,j}\le p'_{k,j}\qquad \forall k\ge1,\ \forall j\in\{1,\ldots,n_k\},
\end{equation}
since response lengths are unchanged.

We first show that \sortF{} has no idle gap between consecutive Phase~1 batches: for every $k\ge2$,
\begin{equation}\label{eq:SF-no-gap}
\max_{1\le j\le n_k} p_{k,j}
\ \le\
\max_{1\le j\le n_{k-1}}\bigl(p_{k-1,j}+o_{k-1,j}\bigr).
\end{equation}
Let $T_{k-1}:=\max_{1\le j\le n_{k-1}}(p_{k-1,j}+o_{k-1,j})$. If $\max_j p_{k,j}>T_{k-1}$, let $r_{k,\ell}$ be the first request in $\mathcal X_k$ (in the Phase~1 order) with $p_{k,\ell}>T_{k-1}$. Then $r_{k,\ell}$ is still pending at time $T_{k-1}$. However, at each timestep Phase~2 scans $\mathcal I'$ in order and admits the longest feasible prefix; at time $T_{k-1}$ all requests in $\mathcal X_{k-1}$ have completed, which can only relax the feasibility checks. Since $\mathcal X_k$ is feasible in Phase~1 (Algorithm~\ref{algorithm_1}, lines 3--4) and feasibility is monotone under taking subsets, $r_{k,\ell}$ must be admissible no later than $T_{k-1}$, a contradiction. This proves \eqref{eq:SF-no-gap}.

We now verify \eqref{eq:pk-monotone}. For \(k=1\), Definition~\ref{def:trans_separate} sets \(p'_{1,j}=\text{start}_1=\max_{j'}p_{1,j'}\ge p_{1,j}\). For \(k\ge2\), Definition~\ref{def:trans_separate} sets \(p'_{k,j}=\text{start}_k=\text{complete}_{k-1}\) for all \(j\), where
\[
\text{complete}_{k-1}
=
\max_{1\le j\le n_{k-1}}\{p'_{k-1,j}+o_{k-1,j}\}.
\]
Since \(p'_{k-1,j}=\text{start}_{k-1}\ge p_{k-1,j}\), we have
\[
\text{complete}_{k-1}
\ge
\max_{1\le j\le n_{k-1}}(p_{k-1,j}+o_{k-1,j})
\ge
\max_{1\le j\le n_k} p_{k,j}.
\]
where the last inequality is \eqref{eq:SF-no-gap}. Hence $p_{k,j}\le p'_{k,j}$ for all $k,j$, proving the theorem. \Halmos
\end{proof}

\begin{proof} {Proof of Theorem \ref{thm:grouping}:}
Each group $\mathcal{Y}_m$ is constructed such that $b_m$ is the smallest integer $k > b_{m-1}$ satisfying the inequality
$$
\frac{\sum_{j=1}^{n_k} o_{k,j} + o_{k+1,1}}{(n_k + 1)^2} \leq \frac{\sum_{j=1}^{n_k} o_{k,j}}{n_k^2},
$$
which implies that including $r_{b_m+1,1}$ in $\mathcal{X}_{b_m}$ would violate the memory constraint $M$. If this were not the case, the request $r_{b_m+1,1}$ should have been incorporated into $\mathcal{X}_{b_m}$ to either reduce the objective value $F(\mathcal{X}_{b_m})$ or increase the batch size $|\mathcal{X}_{b_m}|$ while preserving $F(\mathcal{X}_{b_m})$. However, such inclusion would contradict the optimal selection criterion of \sortF. We therefore conclude that $\mathcal{X}_{b_m}$ must nearly saturate the memory $M$ according to Definition~\ref{def:nearly_saturates}.

Next, we quantify how the processing times (i.e., the maximum response length within a batch) decay across batches inside each group. This is captured by the following lemma.

\begin{lemma}\label{lemma:geometric_time}
For any group $\mathcal{Y}_m$ and any batch index $i \in \{1, \dots, a_m\}$, the longest processing time
$o_{b_{m-1}+i, n_{b_{m-1}+i}}$ of batch $\mathcal{X}_{b_{m-1}+i}$ satisfies
\[
o_{b_{m-1}+i, n_{b_{m-1}+i}}
\le
\left(\frac{1}{2}\right)^{\lfloor \frac{a_m-i}{2} \rfloor}
\cdot
o_{b_m, n_{b_m}},
\]
where $o_{b_m, n_{b_m}}$ is the longest processing time in the terminal batch $\mathcal{X}_{b_m}$ of group
$\mathcal{Y}_m$.
\end{lemma}

\begin{proof} {Proof of Lemma \ref{lemma:geometric_time}:}
According to the construction of $\mathcal{Y}_m$, for any $k \in \{b_{m-1}+1, b_{m-1}+2, \cdots, b_m-1\}$, the following inequality holds:
$$
\frac{\sum_{j=1}^{n_k} o_{k,j} + o_{k+1,1}}{(n_k + 1)^2} > \frac{\sum_{j=1}^{n_k} o_{k,j}}{n_k^2}.
$$
This inequality implies that incorporating request $r_{k+1,1}$ into batch $\mathcal{X}_k$ would not yield improvement in the objective function value $F(\mathcal{X}_k)$. Consequently, while such addition may remain within the memory constraint $M$, it is excluded by the optimality criteria of the batching procedure.

Let $q_{k+1}$ denote the maximum number of requests from batch $k+1$ that can be added to $\mathcal{X}_k$ without exceeding $M$, i.e., $\mathcal{X}_k + [r_{k+1,1}, \dots, r_{k+1,q_{k+1}}]$ remains feasible. The critical inequality
$$
\frac{\sum_{i=1}^{n_k} o_{k,i} + \sum_{j=1}^{q_{k+1}} o_{k+1,j}}{(n_k + q_{k+1})^2} > \frac{\sum_{i=1}^{n_k} o_{k,i}}{n_k^2}
$$
must hold. Otherwise, incorporating these $q_{k+1}$ requests would have improved $\mathcal{X}_k$ by either reducing $F(\mathcal{X}_k)$ or increasing its batch size while maintaining $F(\mathcal{X}_k)$, which would contradict the optimal selection criterion of \sortF.

This inequality directly implies the following relationship between the average processing times:
\begin{align}
\frac{\sum_{j=1}^{q_{k+1}} o_{k+1,j}}{q_{k+1}} > \left( 2 + \frac{q_{k+1}}{n_k} \right) \cdot \frac{\sum_{i=1}^{n_k} o_{k,i}}{n_k}.
\label{average_o2_average_o1}
\end{align}

Next, we examine the relationship between the longest processing times $o_{k+1,n_{k+1}}$ in batch $\mathcal{X}_{k+1}$ and $o_{k,n_k}$ in batch $\mathcal{X}_k$. Since $o_{k+1,n_{k+1}}$ is the maximum processing time in $\mathcal{X}_{k+1}$, it dominates both individual terms and their average:
\begin{align}
    o_{k+1,n_{k+1}} \geq o_{k+1,q_{k+1}} \geq \frac{1}{q_{k+1}}\sum_{j=1}^{q_{k+1}} o_{k+1,j}.
    \label{longest_o2_average_o2}
\end{align}

Furthermore, Lemma \ref{lemma:output_balance} establishes that for large $n_k$, the average processing time in $\mathcal{X}_k$ satisfies
\begin{align*}
    \frac{1}{n_k}\sum_{i=1}^{n_k} o_{k,i} > \frac{1}{2} \cdot o_{k,n_k}.
\end{align*}

Combining these results with Inequality \eqref{average_o2_average_o1} yields the key relationship:
\begin{align}
    o_{k+1,n_{k+1}} &\geq \frac{1}{q_{k+1}}\sum_{j=1}^{q_{k+1}} o_{k+1,j}
    > \left(2 + \frac{q_{k+1}}{n_k}\right) \cdot \frac{1}{n_k}\sum_{i=1}^{n_k} o_{k,i}
    > \left(2 + \frac{q_{k+1}}{n_k}\right) \cdot \frac{1}{2} o_{k,n_k} 
    > o_{k,n_k}.
    \label{longest_o2_longest_o1}
\end{align}

Subsequently, we extend the analysis to consecutive batches $\mathcal{X}_{k+1}$ and $\mathcal{X}_{k+2}$ for any $k \in \{b_{m-1}+1, \dots, b_m-2\}$. The construction similarly satisfies
$$
\frac{\sum_{j=1}^{n_{k+1}} o_{k+1,j} + o_{k+2,1}}{(n_{k+1} + 1)^2} > \frac{\sum_{j=1}^{n_{k+1}} o_{k+1,j}}{n_{k+1}^2},
$$
which demonstrates that including request $r_{k+2,1}$ in $\mathcal{X}_{k+1}$ would not reduce the objective function value $F(\mathcal{X}_{k+1})$. While such inclusion may not exceed memory $M$, it is precluded by the optimality criteria of the batching procedure.

Let $q_{k+2}$ denote the maximum number of requests from $\mathcal{X}_{k+2}$ that can be added to $\mathcal{X}_{k+1}$ without violating $M$. This yields two key inequalities:
\begin{align*}
\frac{\sum_{j=1}^{q_{k+2}} o_{k+2,j}}{q_{k+2}} &> \left(2 + \frac{q_{k+2}}{n_{k+1}}\right) \cdot \frac{\sum_{i=1}^{n_{k+1}} o_{k+1,i}}{n_{k+1}} \\
o_{k+2,n_{k+2}} &\geq o_{k+2,q_{k+2}} \geq \frac{\sum_{j=1}^{q_{k+2}} o_{k+2,j}}{q_{k+2}}. 
\end{align*}

Furthermore, the increasing response length property implies
\begin{align*}
\frac{\sum_{i=1}^{n_{k+1}} o_{k+1,i}}{n_{k+1}} \geq \frac{\sum_{j=1}^{q_{k+1}} o_{k+1,j}}{q_{k+1}}. 
\end{align*}

Combining these results with previous Inequalities \eqref{average_o2_average_o1}, \eqref{longest_o2_average_o2}, we can derive the cumulative relationship:
\begin{align}
o_{k+2,n_{k+2}} &\geq \frac{\sum_{l=1}^{q_{k+2}} o_{k+2,l}}{q_{k+2}}
> \left(2 + \frac{q_{k+2}}{n_{k+1}}\right) \cdot \frac{\sum_{i=1}^{n_{k+1}} o_{k+1,i}}{n_{k+1}}
\geq \left(2 + \frac{q_{k+2}}{n_{k+1}}\right) \cdot \frac{\sum_{j=1}^{q_{k+1}} o_{k+1,j}}{q_{k+1}} \notag \\
&\geq \left(2 + \frac{q_{k+2}}{n_{k+1}}\right) \cdot \left(2 + \frac{q_{k+1}}{n_k}\right) \cdot \frac{\sum_{i=1}^{n_k} o_{k,i}}{n_k}
> 4 \cdot \frac{1}{2} o_{k,n_k} \notag \\
&= 2 o_{k,n_k}.
\label{longest_o3_longest_o1}
\end{align}

Based on Inequalities \labelcref{longest_o2_longest_o1} and \labelcref{longest_o3_longest_o1}, for any $i \in \{1, \dots, a_m\}$, we can derive the following geometric bound for all batch indices $i \in \{1, \dots, a_m\}$:
\begin{align*}
    o_{k+i, n_{k+i}} \leq \left(\frac{1}{2}\right)^{\lfloor \frac{a-i}{2} \rfloor} \cdot o_{k+a, n_{k+a}}.
\end{align*}
\Halmos
\end{proof}

By Lemma~\ref{lemma:geometric_time}, we obtain
\begin{align}\label{lemma:geometric_time_extension}
\sum_{i=1}^{a_m} o_{b_{m-1}+i, n_{b_{m-1}+i}}
&\leq \sum_{i=1}^{a_m}
\left(\frac{1}{2}\right)^{\lfloor \frac{a_m-i}{2} \rfloor}
\cdot o_{b_m, n_{b_m}}
\leq 2 \cdot \frac{1 - \left(\frac{1}{2}\right)^{\lfloor \frac{a_m-1}{2} \rfloor}}{1-\frac{1}{2}}
\cdot  o_{b_m, n_{b_m}}
\leq 4 \cdot  o_{b_m, n_{b_m}}.
\end{align}

At this point, it remains to convert \eqref{lemma:geometric_time_extension} into a TEL comparison. We rewrite $\mathrm{TEL}(\textup{\sortF}_\mathrm{group};\mathcal I)$ and $\mathrm{TEL}(\textup{\sortF}_\mathrm{separate};\mathcal I)$ in batch form, regroup by $\{\mathcal{Y}_m\}$, and apply \eqref{lemma:geometric_time_extension} to bound each group’s contribution. The next lemma formalizes this bookkeeping step.

\begin{lemma}\label{lem:app3}
For any $\mathcal I$,
\[
\mathrm{TEL}(\textup{\sortF}_\mathrm{group}; \mathcal{I})
>
\frac{1}{4} \cdot \mathrm{TEL}(\textup{\sortF}_\mathrm{separate}; \mathcal{I}).
\]
\end{lemma}

\begin{proof}{Proof of Lemma \ref{lem:app3}}
\begin{align*}
    \text{TEL}(\text{\sortF}_\text{group}; \mathcal{I})&= \sum_{m=1}^\infty \left(o_{b_m, n_{b_m}} \cdot \sum_{k=b_{m}+1}^\infty n_k \right) + \sum_{k=1}^\infty \sum_{i=1}^{n_k} o_{k, i} \notag \\ 
    &\geq \sum_{m=1}^\infty \left(\frac{1}{4} \cdot \sum_{l=b_{m-1}+1}^{b_{m}} o_{l, n_l} \cdot \sum_{k=b_{m}+1}^\infty n_k \right) + \sum_{k=1}^\infty \sum_{i=1}^{n_k} o_{k, i} \notag \\
    &= \frac{1}{4} \cdot \sum_{m=1}^\infty \left(\sum_{l=b_{m-1}+1}^{b_{m}} o_{l, n_l} \cdot \left( \sum_{k=b_{m}+1}^\infty n_k + \mathrm{o} \left( \sum_{k=b_{m}+1}^\infty n_k \right) \right) \right) + \sum_{k=1}^\infty \sum_{i=1}^{n_k} o_{k, i} \notag \\
    &= \frac{1}{4} \cdot \sum_{m=1}^\infty \left(\sum_{l=b_{m-1}+1}^{b_{m}} o_{l, n_l} \cdot \left( \sum_{k=b_{m}+1}^\infty n_k + \sum_{k=b_{m-1}+1}^{b_m} n_k \right) \right) + \sum_{k=1}^\infty \sum_{i=1}^{n_k} o_{k, i} \notag \\
    &> \frac{1}{4} \cdot \sum_{m=1}^\infty \left(\sum_{l=b_{m-1}+1}^{b_{m}} o_{l, n_l} \cdot \sum_{k=b_{m}+1}^\infty n_k + \sum_{l=b_{m-1}+1}^{b_{m}-1} \left(o_{l, n_l} \cdot \left(b_m-l \right) \right) \right) + \sum_{k=1}^\infty \sum_{i=1}^{n_k} o_{k, i} \notag \\
    &> \frac{1}{4} \cdot \left(\sum_{m=1}^\infty \left(\sum_{l=b_{m-1}+1}^{b_{m}} o_{l, n_l} \cdot \sum_{k=b_{m}+1}^\infty n_k + \sum_{l=b_{m-1}+1}^{b_{m}-1} \left(o_{l, n_l} \cdot \left(b_m-l \right) \right) \right) + \sum_{k=1}^\infty \sum_{i=1}^{n_k} o_{k, i}\right) \notag \\
    &= \frac{1}{4} \cdot \text{TEL}(\text{\sortF}_\text{separate}; \mathcal{I}).
    \label{group_separate}
\end{align*}
\Halmos
\end{proof}

 Applying
Lemma~\ref{lem:app3} yields
$\mathrm{TEL}(\textup{\sortF}_\mathrm{separate}; \mathcal{I}) < 4\cdot
\mathrm{TEL}(\textup{\sortF}_\mathrm{group}; \mathcal{I})$, completing the proof of
Theorem~\ref{thm:grouping}. \Halmos
\end{proof}

\begin{proof} {Proof of Theorem \ref{thm:aligning}:}
By Lemma \ref{lemma:output_balance},
\begin{align*}
    \bar o_{b_m} > \frac{1}{2} \cdot o_{b_m, n_{b_m}}.
\end{align*}

Subsequently, given any $\mathcal{I}$,
\begin{align*}
    \text{TEL}(\text{\sortF}_\text{align}; \mathcal{I}) &= \sum_{m=1}^\infty \left(\bar o_{b_m} \cdot \sum_{k=b_{m}+1}^\infty n_k \right) + \sum_{k=1}^\infty \sum_{i=1}^{n_k} o_{k, i} \notag \\
    &> \sum_{m=1}^\infty \left(\frac{1}{2} \cdot o_{b_m, n_{b_m}} \cdot \sum_{k=b_{m}+1}^\infty n_k \right) + \sum_{k=1}^\infty \sum_{i=1}^{n_k} o_{k, i} \notag \\
    &\geq \frac{1}{2} \cdot \left(\sum_{m=1}^\infty \left(o_{b_m, n_{b_m}} \cdot \sum_{k=b_{m}+1}^\infty n_k \right) + \sum_{k=1}^\infty \sum_{i=1}^{n_k} o_{k, i}\right) \notag \\
    &= \frac{1}{2} \cdot\text{TEL}(\text{\sortF}_\text{group}; \mathcal{I}).
\end{align*}
\Halmos
\end{proof}

\begin{proof} {Proof of Theorem \ref{thm:transforming}:}
We now prove $\mathrm{TEL}(\textup{\opt}_\mathrm{align};\mathcal{I})\le 6\cdot \mathrm{TEL}(\textup{\opt};\mathcal{I})$.

For group $\mathcal{Z}_g$, let $\Delta t_g$ denote its original execution window length under $\textup{\opt}$:
\[
\Delta t_g := \max_{r_i\in\mathcal{Z}_g}(p_i+o_i)-\min_{r_i\in\mathcal{Z}_g}p_i.
\]
By construction of $\{\mathcal{Z}_g\}$ (Definition~\ref{def:trans_opt_align}(i)), all requests in $\mathcal{Z}_g$ complete no later than $\sigma_{g-1}+\delta_g$, while $\mathcal{H}_g$ contains requests alive at time $\sigma_{g-1}$; in particular, $\max_{r_i\in\mathcal{H}_g}(p_i+o_i)=\sigma_{g-1}+\delta_g$. This implies the crude bound $\Delta t_g < \delta_{g-1}+\delta_g$, which is the only property we will use.


By Definition~\ref{def:nearly_saturates}, every non-final batch $\mathcal{W}_h$ (i.e., $h\neq d_g$ within its group) satisfies
\begin{align}
\eta_h \;\ge\; \frac{\sum_{r_i \in \mathcal{W}_h} (2s_i + o_i)}{2M}
\;>\; \frac{M - L(M)}{2M}
\;\to \; \frac{1}{2}.
\label{eta_h}
\end{align}

Using \eqref{eta_h} and the window bound $\Delta t_g<\delta_{g-1}+\delta_g$, the main batches $\mathcal{W}_{d_{g-1}+1},\ldots,\mathcal{W}_{d_g-1}$ complete within $2(\delta_{g-1}+\delta_g)$ time units. The final batch $\mathcal{W}_{d_g}$ has all (aligned) response lengths bounded by $\delta_{g-1}+\delta_g$, hence completes within $\delta_{g-1}+\delta_g$ time units. Therefore, the whole batch sequence of group $g$ finishes within $[t_g - 3\delta_{g-1},\; t_g + 3\delta_g]$, where $t_g$ is the scaled cutting time in Definition~\ref{def:trans_opt_align}(ii).

It remains to convert the above window bound into a TEL bound. We rewrite
$\mathrm{TEL}(\opt_\mathrm{align};\mathcal I)$ in batch form, regroup by $\{\mathcal Z_g\}$, and use that group $g$ completes within $[t_g-3\delta_{g-1},\,t_g+3\delta_g]$. The next lemma formalizes this
bookkeeping step.

\begin{lemma}\label{lem:append8}
Given the transformed schedule $\textup{\opt}_\mathrm{align}$ constructed in
Definition~\ref{def:trans_opt_align}$,$ we have
\[
\mathrm{TEL}(\textup{\opt}_\mathrm{align}; \mathcal{I}) \leq 6 \cdot \mathrm{TEL}(\textup{\opt}; \mathcal{I}).
\]
\end{lemma}

\begin{proof}{Proof of Lemma \ref{lem:append8}:}
\begin{align}
\mathrm{TEL}(\textup{\opt}_\mathrm{align}; \mathcal{I})
&= \sum_{h=1}^\infty \left(\bar o_h \cdot \sum_{j=h+1}^\infty n_j \right) + \sum_{h=1}^\infty \sum_{i=1}^{n_h} o_{h, i} \notag \\
&= \sum_{g=1}^\infty \sum_{h=d_{g-1}+1}^{d_g} \left(\bar o_h \cdot \left(\sum_{j=h+1}^{d_g} n_j + \sum_{j=d_g+1}^\infty n_j\right) \right) + \sum_{h=1}^\infty \sum_{i=1}^{n_h} o_{h, i} \notag \\
&= \sum_{g=1}^\infty \sum_{h=d_{g-1}+1}^{d_g} \left(\bar o_h \cdot \left(\mathrm{o} \left(\sum_{j=d_g+1}^\infty n_j\right) + \sum_{j=d_g+1}^\infty n_j\right) \right) + \sum_{h=1}^\infty \sum_{i=1}^{n_h} o_{h, i} \notag \\
&= \sum_{g=1}^\infty \left(\sum_{h=d_{g-1}+1}^{d_g} \bar o_h \cdot \sum_{j=d_g+1}^\infty n_j\right) + \sum_{h=1}^\infty \sum_{i=1}^{n_h} o_{h, i} \notag \\
&\leq \sum_{g=1}^\infty \left(3 \cdot (\delta_{g-1}+\delta_g) \cdot \sum_{j=d_g+1}^\infty n_j\right) + \sum_{h=1}^\infty \sum_{i=1}^{n_h} o_{h, i} \notag \\
&= 3 \cdot \sum_{g=1}^\infty \left(\delta_g \cdot \left(\sum_{j=d_g+1}^\infty n_j+\sum_{j=d_{g+1}+1}^\infty n_j\right)\right) + \sum_{h=1}^\infty \sum_{i=1}^{n_h} o_{h, i} \notag \\
&\leq 6 \cdot \sum_{g=1}^\infty \left(\delta_g \cdot \sum_{j=d_g+1}^\infty n_j\right) + \sum_{h=1}^\infty \sum_{i=1}^{n_h} o_{h, i} \notag \\
&\leq 6 \cdot \left(\sum_{g=1}^\infty \left(\delta_g \cdot \sum_{j=d_g+1}^\infty n_j + \sum_{r_\ell \in \mathcal{Z}_g} (p_\ell-t_g)\right) + \sum_{h=1}^\infty \sum_{i=1}^{n_h} o_{h, i}\right) \notag \\
&= 6 \cdot \mathrm{TEL}(\textup{\opt}; \mathcal{I}). \notag
\end{align}
\Halmos
\end{proof}

Applying Lemma~\ref{lem:append8} completes the proof of Theorem~\ref{thm:transforming}. \Halmos
\end{proof}

\begin{proof} {Proof of Theorem \ref{thm:linking}:}
We first record the memory-utilization fact used in this comparison. By
Definition~\ref{def:nearly_saturates}, the terminal batch
\(\mathcal{X}_{b_m}\) of each group \(\mathcal{Y}_m\) nearly saturates memory.
Since \(\mathcal{X}_{b_m}\subseteq \mathcal{Y}_m\) and
\(2s_i+o_i\ge s_i+o_i\), the average memory utilization efficiency
\(\eta_m\) of any group \(\mathcal{Y}_m\) satisfies
\begin{align}
    \eta_m
    \;\ge\;
    \frac{\sum_{r_i \in \mathcal{Y}_m} (2s_i+o_i)}{2M}
    \;>\;
    \frac{M-L(M)}{2M}
    \;\to\;
    \frac{1}{2}.
    \label{eta_m}
\end{align}

The key insight is that minimizing total end-to-end latency for a fixed makespan requires maximizing throughput in the earliest possible time intervals. This relationship is captured precisely by our throughput metric $\Phi(\mathcal{X}_k) = n_k/\bar o_k$, which measures requests completed per unit time in batch $\mathcal{X}_k$. Notably, this metric is exactly the reciprocal of our quality metric $F(\mathcal{X}_k)$, establishing a direct connection between our optimization objective and scheduling efficiency.

The algorithm $\text{\sortF}_\text{align}$ is designed to explicitly optimize for early throughput maximization by minimizing $F(\mathcal{X}_k)$. For each batch $\mathcal{X}_k$, given the fixed prior batches $\mathcal{X}_1, \mathcal{X}_2, ...,\mathcal{X}_{k-1}$, $\text{\sortF}_\text{align}$ achieves the minimal possible $F(\mathcal{X}_k)$, thereby maximizing $\Phi(\mathcal{X}_k)$. This ordering is consistent with the standard pairwise-interchange comparison. Consider two aligned candidate batches \(A\) and \(B\), with sizes \(n_A,n_B\) and aligned response lengths \(\bar{o}_A,\bar{o}_B\), after a fixed prefix ending at time \(\tau\). The difference between processing \(A\) before \(B\) and processing \(B\) before \(A\) is
\[
\begin{aligned}
\mathrm{TEL}_{A\prec B}-\mathrm{TEL}_{B\prec A}
&=
\bigl[n_A(\tau+\bar{o}_A)+n_B(\tau+\bar{o}_A+\bar{o}_B)\bigr] \\
&\quad -
\bigl[n_B(\tau+\bar{o}_B)+n_A(\tau+\bar{o}_B+\bar{o}_A)\bigr] \\
&=
n_B\bar{o}_A-n_A\bar{o}_B.
\end{aligned}
\]
Thus, if \(F(A)=\bar{o}_A/n_A\leq \bar{o}_B/n_B=F(B)\), then \(\mathrm{TEL}_{A\prec B}\leq \mathrm{TEL}_{B\prec A}\). Hence, placing smaller-\(F\) batches earlier weakly decreases the total completion-time contribution.

Furthermore, the memory utilization efficiency comparison reveals a significant advantage: in Algorithm $\text{\sortF}_\text{align}$, each group $\mathcal{Y}_m$ achieves $\eta_m \ge 1/2$ by \eqref{eta_m}, whereas in $\text{\opt}_\text{align}$, the scaled allocation limits the efficiency to $\eta_{[t_g, t_{g+1}]} \leq 1/6$. This efficiency gap supports the comparison of cumulative completed work between $\text{\sortF}_\text{align}$ and $\text{\opt}_\text{align}$.

The combination of early throughput maximization and superior makespan performance conclusively demonstrates the theorem's validity. \Halmos
\end{proof}

\subsection{Figure Illustrations for the Proof} \label{append:fig}

\begin{figure}[!h]
\center
\includegraphics[width=0.95\textwidth]{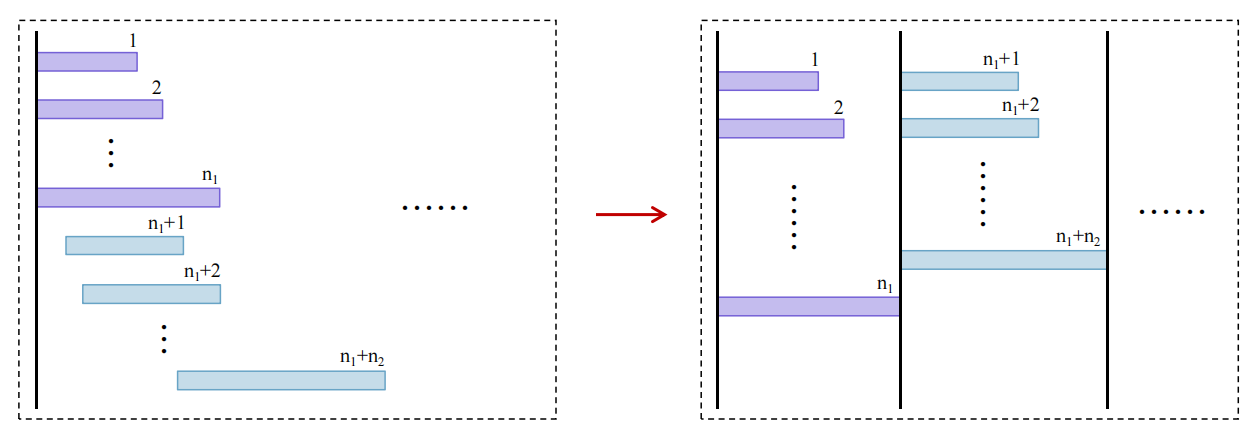}
\caption{Schematic illustration of Definition~\ref{def:trans_separate} (from $\textup{\sortF}$ to $\textup{\sortF}_\mathrm{separate}$). The horizontal axis is time; each block represents a request whose width equals its response length.} 
\label{fig:separating}
\end{figure}

\begin{figure}[!h]
\center
\includegraphics[width=0.95\textwidth]{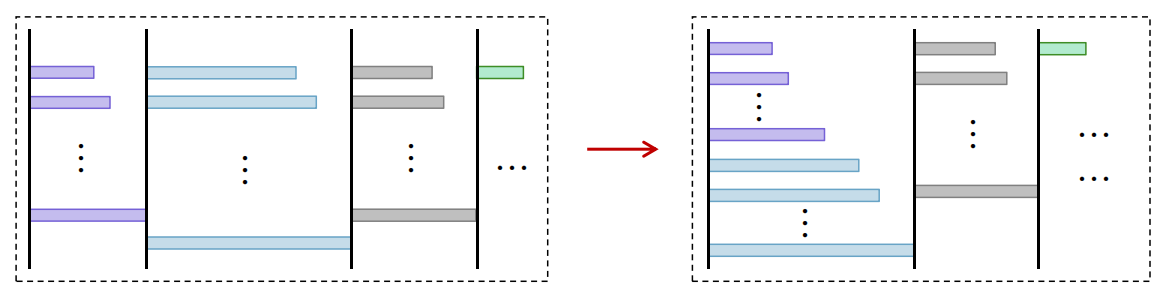}
\caption{Schematic illustration of Definition~\ref{def:trans_group} (from $\textup{\sortF}_\mathrm{separate}$ to $\textup{\sortF}_\mathrm{group}$).} 
\label{fig:grouping}
\end{figure}

\begin{figure}[!h]
\center
\includegraphics[width=0.95\textwidth]{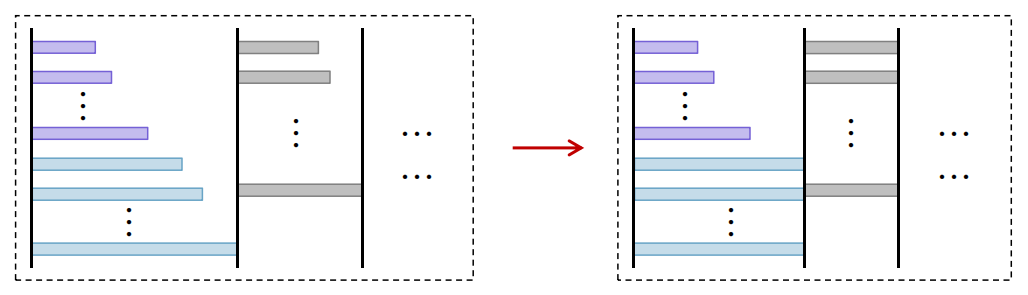}
\caption{Schematic illustration of Definition~\ref{def:trans_align} (from $\textup{\sortF}_\mathrm{group}$ to $\textup{\sortF}_\mathrm{align}$).} 
\label{fig:aligning}
\end{figure}

\begin{figure}[!h]
\center
\includegraphics[width=0.95\textwidth]{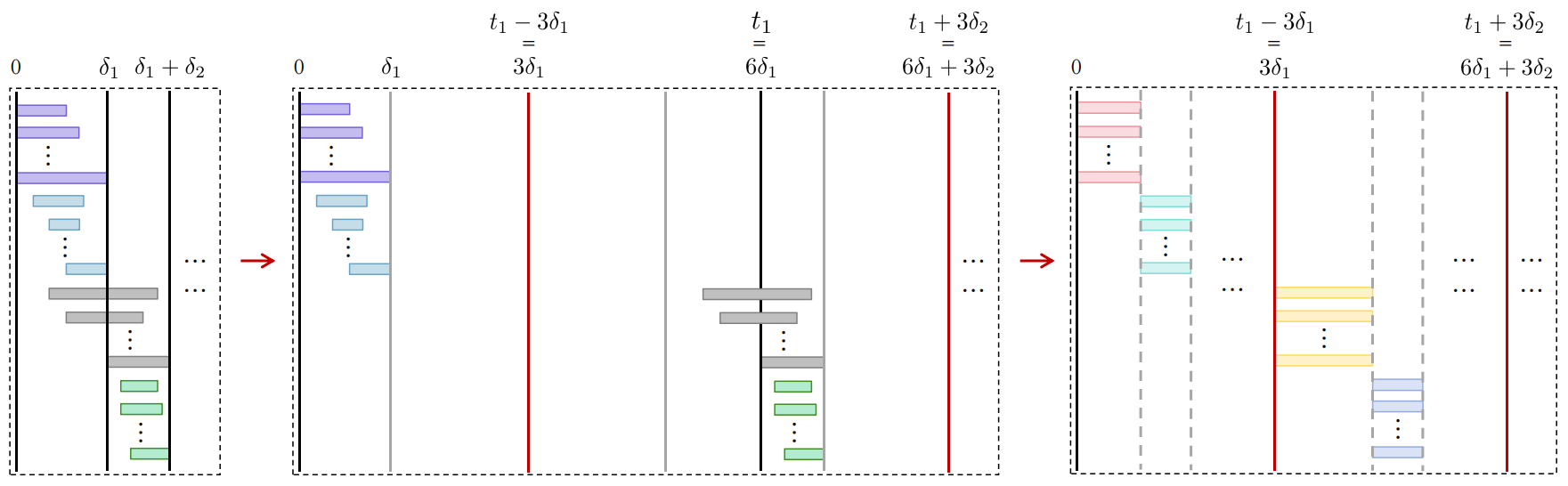}
\caption{Schematic illustration of Definition~\ref{def:trans_opt_align} (from $\textup{\opt}$ to $\textup{\opt}_\mathrm{align}$).}
\label{fig:transforming}
\end{figure}

\section{Supplementary Materials for Section \ref{sec:approximation}} \label{append:approx}

\begin{algorithm}[htbp]
\caption{Exact Optimal Request Selection via Dynamic Programming}\label{alg:exact_dp}
\begin{algorithmic}[1]
\State $n \gets |\mathcal{I}|$
\State Initialize $dp[k][m] \gets \infty$ and $path[k][m] \gets \emptyset$ for $k = 0..n,\ m = 0..M$
\State $dp[0][0] \gets 0$
\For{each request $r_i \in \mathcal{I}$}
    \State $m_i \gets s_i + o_i$
    \State $o_i \gets $ output length
    \For{$k \gets n-1$ \textbf{downto} $0$}
        \For{each $m$ where $dp[k][m] < \infty$}
            \State $new_m \gets m + m_i$
            \If{$new_m \leq M$}
                \State $new\_val \gets dp[k][m] + o_i$
                \If{$new\_val < dp[k+1][new_m]$}
                    \State $dp[k+1][new_m] \gets new\_val$
                    \State $path[k+1][new_m] \gets path[k][m] \cup \{i\}$
                \EndIf
            \EndIf
        \EndFor
    \EndFor
\EndFor
\State $\mathcal{X}^* \gets \emptyset,\ F^* \gets \infty$
\For{$k = 1$ \textbf{to} $n$}
    \For{$m = 0$ \textbf{to} $M$}
        \If{$dp[k][m] < \infty$}
            \State $F \gets dp[k][m] / (k \times k)$
            \If{$F < F^*$}
                \State $F^* \gets F$
                \State $\mathcal{X}^* \gets \{ r_i : i \in path[k][m] \}$
            \EndIf
        \EndIf
    \EndFor
\EndFor
\State \textbf{return} $\mathcal{X}^*$
\end{algorithmic}
\end{algorithm}

\begin{algorithm}[htbp]
\caption{Local Swap Search}\label{alg:local_swap}
\begin{algorithmic}[1]
\State Sort $\mathcal{I}$ by ascending $s_i + o_i$
\State $\mathcal{X} \gets \emptyset,\ mem \gets 0$
\For{each $r \in \mathcal{I}$ in sorted order}
    \If{$mem + s_r + o_r \leq M$}
        \State $\mathcal{X} \gets \mathcal{X} \cup \{r\}$
        \State $mem \gets mem + s_r + o_r$
    \EndIf
\EndFor
\State $improved \gets True$
\While{$improved$}
    \State $improved \gets False$
    \State $currentF \gets (\sum_{r \in \mathcal{X}} o_r) / |\mathcal{X}|^2$
    \For{each $r_{out} \in \mathcal{X}$}
        \For{each $r_{in} \in \mathcal{I} \setminus \mathcal{X}$}
            \State $\Delta_m \gets (s_{in} + o_{in}) - (s_{out} + o_{out})$
            \If{$mem + \Delta_m > M$}
                \State \textbf{continue}
            \EndIf
            \State $\Delta_o \gets o_{in} - o_{out}$
            \State $newF \gets (\sum_{r \in \mathcal{X}} o_r + \Delta_o) / |\mathcal{X}|^2$
            \If{$newF < currentF$}
                \State $\mathcal{X} \gets (\mathcal{X} \setminus \{r_{out}\}) \cup \{r_{in}\}$
                \State $mem \gets mem + \Delta_m$
                \State $improved \gets True$
                \State \textbf{break inner loop}
            \EndIf
        \EndFor
        \If{$improved$}
            \State \textbf{break}
        \EndIf
    \EndFor
\EndWhile
\State \textbf{return} $\mathcal{X}$
\end{algorithmic}
\end{algorithm}

\begin{algorithm}[htbp]
\caption{Quantile Greedy Selection}\label{alg:quantile_greedy}
\begin{algorithmic}[1]
\State Sort $\mathcal{I}$ by ascending $o_i$
\State $sample\_size \gets \max(1, \lfloor 0.5 \times |\mathcal{I}| \rfloor)$
\State $sample \gets$ random subset of $\mathcal{I}$ with size $sample\_size$
\State $Q_p \gets 0.3\text{-quantile of } \{s_i + o_i \text{ for } r_i \in sample\}$
\State $Q_o \gets 0.3\text{-quantile of } \{o_i \text{ for } r_i \in sample\}$
\State $\mathcal{X} \gets \emptyset,\ mem \gets 0$
\For{each $r \in \mathcal{I}$ in sorted order}
    \If{$(s_r + o_r) \leq Q_p$ and $o_r \leq Q_o$ and $mem + s_r + o_r \leq M$}
        \State $\mathcal{X} \gets \mathcal{X} \cup \{r\}$
        \State $mem \gets mem + s_r + o_r$
    \EndIf
\EndFor
\State $remaining \gets \mathcal{I} \setminus \mathcal{X}$
\State Sort $remaining$ by ascending $o_i / (s_i + o_i)$
\For{each $r \in remaining$ in sorted order}
    \If{$mem + s_r + o_r \leq M$}
        \State $\mathcal{X} \gets \mathcal{X} \cup \{r\}$
        \State $mem \gets mem + s_r + o_r$
    \EndIf
\EndFor
\State \textbf{return} $\mathcal{X}$
\end{algorithmic}
\end{algorithm}

\begin{algorithm}[htbp]
\caption{\sortF{}+$\mathcal{A}_{\min}$: Adaptive Scheduling under Prediction Uncertainty} \label{alg:sortF_Amin}
\begin{algorithmic}[1]
\Require Request set $\mathcal{I}$ with prefill sizes $\{s_i\}$ and predicted intervals $\{[\ell_i, u_i]\}$; memory capacity $M$
\Ensure Schedule minimizing total end-to-end latency
\State \textbf{Initialization:} Set $\tilde{o}_i \gets \ell_i$ for all $i \in \mathcal{I}$ \Comment{Optimistic initialization}
\Statex
\State \textbf{Phase 1: Batch Construction with Lower Bounds}
\State Run Phase~1 of \sortF{} (Algorithm~\ref{algorithm_1}) using $\tilde{o}_i$ in place of $o_i$
\State Let $\mathcal{I}'$ be the resulting ordered request sequence
\Statex
\State \textbf{Phase 2: Adaptive Scheduling Execution}
\State $t \gets 0$, $\mathcal{R}^{(0)} \gets \mathcal{I}'$, $\mathcal{V}^{(0)} \gets \emptyset$
\While{$\mathcal{R}^{(t)} \neq \emptyset$ \textbf{or} $\mathcal{V}^{(t)} \neq \emptyset$}
    \State $t \gets t + 1$
    \For{$r_i \in \mathcal{V}^{(t-1)}$} \Comment{Update running estimates}
        \State $\tilde{o}_i \gets \max\{\tilde{o}_i,\, t - p_i\}$
    \EndFor
    \State Remove completed requests: $\mathcal{V}^{(t)} \gets \{r_i \in \mathcal{V}^{(t-1)} : t - p_i < \tilde{o}_i \text{ or } r_i \text{ not yet terminated}\}$
    \Statex
    \State \textbf{Admission Step:} Attempt to add pending requests to active set
    \State Run admission logic of Phase~2 of \sortF{} using current $\{\tilde{o}_i\}$ values
    \State Let $\mathcal{U}^{(t)}$ be the set of newly admitted requests; set $p_i \gets t$ for $r_i \in \mathcal{U}^{(t)}$
    \Statex
    \State \textbf{Eviction Step:} Resolve memory overflow if necessary
    \While{$M(\mathcal{V}^{(t)} \cup \mathcal{U}^{(t)},\, \{\tilde{o}_j\}) > M$} \Comment{Ordered eviction from $\mathcal{A}_{\min}$}
        \State $r^* \gets \arg\min_{r_i \in \mathcal{V}^{(t)} \cup \mathcal{U}^{(t)}} \tilde{o}_i$ \Comment{Evict request closest to completion}
        \State $\mathcal{V}^{(t)} \gets \mathcal{V}^{(t)} \setminus \{r^*\}$; \, $\mathcal{U}^{(t)} \gets \mathcal{U}^{(t)} \setminus \{r^*\}$
        \State Return $r^*$ to pending queue $\mathcal{R}^{(t)}$ with updated $\tilde{o}_{r^*}$
    \EndWhile
    \Statex
    \State $\mathcal{R}^{(t)} \gets \mathcal{R}^{(t)} \setminus \mathcal{U}^{(t)}$
    \State $\mathcal{V}^{(t)} \gets \mathcal{V}^{(t)} \cup \mathcal{U}^{(t)}$
    \State \textsc{ProcessOneStep}$(\mathcal{V}^{(t)})$
\EndWhile
\end{algorithmic}
\end{algorithm}

\newpage

\section{Supplementary Materials of Section \ref{sec:extension}} \label{append:extension}

\subsection{Pseudocodes for LP-Based Methods} \label{append:extension_pseudo}

\begin{algorithm}[htbp]
\caption{\sortLP} \label{algorithm_sortLP}
\begin{algorithmic}[1]
\State Solve \text{Relaxed-LP} and obtain an optimal fractional solution $x_{i,t}^{*}$ for all $i \in [n], t \in [\bar{T}]$
\For{each $i \in [n]$}
    \State Compute $y_i \gets \sum_{t \in [\bar{T}]} t \cdot x_{i,t}^{*}$
\EndFor
\State Sort all requests $r_i \in \mathcal{I}$ in ascending order of $y_i$ to obtain an ordered list $\mathcal{I}'$
\State $t \gets 0$, $\mathcal{R}^{(0)} \gets \mathcal{I}'$, $\mathcal{V}^{(0)} \gets \emptyset$
\While{$\mathcal{R}^{(t)} \neq \emptyset \ \textbf{or}\ \mathcal{V}^{(t)} \neq \emptyset$}
    \State $t \gets t + 1$
    \State $\mathcal{U}^{(t)} \gets \emptyset$
    \State $\mathcal{V}^{(t)} \gets \{r_j \in \mathcal{V}^{(t-1)} : p_j + o_j \ge t\}$
    \For{$r_i \in \mathcal{R}^{(t-1)}$ in order}
        \State \textbf{tentatively set} $p_i \gets t$
        \State $\widehat{\mathcal{U}} \gets \mathcal{U}^{(t)} \cup \{r_i\}$
        \If{$M(\mathcal{V}^{(t)} \cup \widehat{\mathcal{U}},\, p_j + o_j) \leq M,\ \forall r_j \in \mathcal{V}^{(t)} \cup \widehat{\mathcal{U}}$}
            \State $\mathcal{U}^{(t)} \gets \widehat{\mathcal{U}}$
        \Else
            \State \textbf{break}
        \EndIf
    \EndFor
    \State $\mathcal{R}^{(t)} \gets \mathcal{R}^{(t-1)} \setminus \mathcal{U}^{(t)}$
    \State $\mathcal{V}^{(t)} \gets \mathcal{V}^{(t)} \cup \mathcal{U}^{(t)}$
    \State \textsc{ProcessOneStep}$(\mathcal{V}^{(t)})$
\EndWhile
\end{algorithmic}
\end{algorithm}

\begin{algorithm}[htbp]
\caption{\LPSwap} \label{algorithm_LPswap}
\begin{algorithmic}[1]
\State Solve \text{Relaxed-LP} and obtain $x^{*}_{i,t}$ for all $i\in[n],\, t\in[\bar T]$; set $y_i \gets \sum_{t\in[\bar T]} t\,x^{*}_{i,t}$ for all $i\in[n]$
\Statex \textbf{Helper:} $\textsc{Feasible}(\mathcal{S}) \equiv \big(M(\mathcal{S},\,p_j+o_j)\le M,\ \forall r_j\in\mathcal{S}\big)$
\State $\mathcal{I}' \gets []$
\While{$\mathcal{I} \neq \emptyset$}
    \State Sort $\mathcal{I}$ by increasing $y_i$
    \State $\mathcal{X}_{\mathrm{init}}\gets\emptyset,\ mem\gets 0$
    \For{each $r\in\mathcal{I}$ in this order}
        \If{$mem+s_r+o_r\le M$} \State $\mathcal{X}_{\mathrm{init}}\gets\mathcal{X}_{\mathrm{init}}\cup\{r\}$; $mem\gets mem+s_r+o_r$ \EndIf
    \EndFor
    \State $\mathcal{X}^* \gets \textsc{LocalSwap}(\mathcal{X}_{\mathrm{init}}, \mathcal{I}, M)$
    \State Sort $\mathcal{X}^*$ by nondecreasing $o_i$
    \State Append $\mathcal{X}^*$ to $\mathcal{I}'$
    \State $\mathcal{I}\gets \mathcal{I}\setminus \mathcal{X}^*$
\EndWhile
\State $t\gets 0$; $\mathcal{R}^{(0)}\gets \mathcal{I}'$; $\mathcal{V}^{(0)}\gets \emptyset$
\While{$\mathcal{R}^{(t)}\neq\emptyset\ \textbf{or}\ \mathcal{V}^{(t)}\neq\emptyset$}
    \State $t\gets t+1$; $\mathcal{U}^{(t)}\gets\emptyset$; $\mathcal{V}^{(t)}\gets\{r_j\in\mathcal{V}^{(t-1)}:p_j+o_j\ge t\}$
    \For{$r_i\in\mathcal{R}^{(t-1)}$ in order}
        \State \textbf{tentatively set} $p_i\gets t$
        \If{$\textsc{Feasible}(\mathcal{V}^{(t)}\cup\mathcal{U}^{(t)}\cup\{r_i\})$}
            \State $\mathcal{U}^{(t)}\gets\mathcal{U}^{(t)}\cup\{r_i\}$
        \Else \State \textbf{break} \EndIf
    \EndFor
    \State $\mathcal{R}^{(t)}\gets\mathcal{R}^{(t-1)}\setminus\mathcal{U}^{(t)}$; $\mathcal{V}^{(t)}\gets\mathcal{V}^{(t)}\cup\mathcal{U}^{(t)}$
    \State \textsc{ProcessOneStep}$(\mathcal{V}^{(t)})$
\EndWhile
\end{algorithmic}
\end{algorithm}

\subsection{Simulation Results and Analysis} \label{append:simulation}

This appendix presents a comprehensive empirical analysis of \sortLP, \sortF{} (Swap), and \LPSwap{} algorithms through simulations on synthetically generated data. We employed five distinct distributions—Uniform, Normal, Binomial, Exponential, and Mixed—to model input ($s_i$) and output ($o_i$) lengths, with detailed parameters as follows.

\subsection*{Experimental Setup and Parameters}

For all experiments, memory capacity $M=100$ and maximum sequence lengths $s_{\max} = o_{\max} = 50$ were fixed. The distributions used are:

\begin{itemize}
    \item \textbf{Uniform}: 
        $s_i \sim \mathcal{U}(1,51)$, $o_i \sim \mathcal{U}(1,51)$.
    \item \textbf{Normal}: 
        $\mu = 25$, $\sigma = 8.33$, $s_i, o_i \sim \mathcal{N}(\mu,\sigma^2)$ clipped to $[1,50]$.
    \item \textbf{Binomial}: 
        $s_i, o_i \sim \text{Binomial}(49, 0.5) + 1$.
    \item \textbf{Exponential}: 
        $s_i, o_i \sim \text{Exp}(\lambda=0.2)$ (scale=5) clipped to $[1,50]$.
    \item \textbf{Mixed}: Designed to mimic real-world patterns in Section \ref{sec:num}:
        \begin{itemize}
            \item \textbf{Input sequences ($s_i$)}: 
                \begin{itemize}
                    \item 80\% exponentially distributed with $\lambda=0.1$ (scale=10).
                    \item 20\% lognormally distributed with $\mu=\ln40$, $\sigma=0.25$.
                    \item Values $>50$ remapped to $\mathcal{U}(40,50)$.
                \end{itemize}
            \item \textbf{Output lengths ($o_i$)}: 
                Exponentially distributed with $\lambda=0.2$ (scale=5) clipped to $[1,50]$.
        \end{itemize}
\end{itemize}

Figure \ref{fig:mixed_hist} illustrates the distribution of input sequence lengths ($s_i$) for the Mixed dataset, highlighting its bimodal structure with predominantly short sequences and occasional longer requests.

\begin{figure}[h]
\centering
\includegraphics[width=0.75\textwidth]{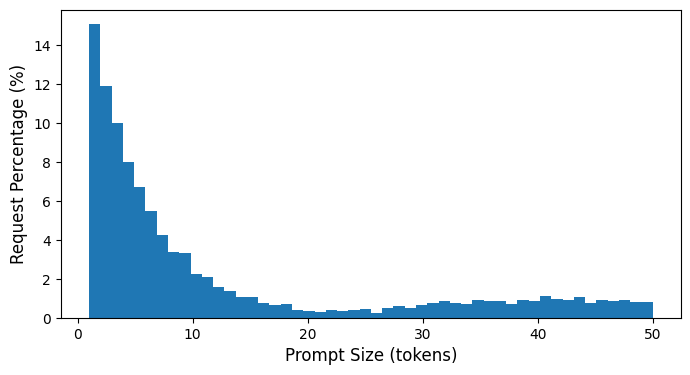}
\caption{Distribution of input sequence lengths ($s_i$) in the Mixed dataset configuration}
\label{fig:mixed_hist}
\end{figure}

\subsection*{Numerical Results}

Table \ref{tab:sim_results} summarizes the average total end-to-end latency (TEL) across all trials, providing the quantitative basis for our subsequent analysis.

\begin{table}[h]
\centering
\caption{Average total end-to-end latency across distributions}
\label{tab:sim_results}
\begin{tabular}{lcccc}
\toprule
Distribution & \sortLP & \sortF{} (Swap) & \LPSwap & Requests ($n$) \\
\midrule
Uniform & 9763.0 & 9748.3 & 9771.4 & 100 \\
Normal & 9397.3 & 9416.9 & 9385.3 & 100 \\
Binomial & 10943.0 & 10696.7 & 10665.7 & 100 \\
Exponential & 6015.7 & 6016.9 & 6016.0 & 100 \\
Mixed & 22133.2 & 22100.1 & 22128.2 & 200 \\
\bottomrule
\end{tabular}
\end{table}

\subsection*{Key Observations}

Through detailed analysis of algorithm execution—including batch formation dynamics, request sequencing patterns, and runtime scheduling decisions—we identify three notable empirical patterns:

1. \textbf{LP's preference for short-output requests}: The \sortLP{} approach consistently prioritizes requests with shorter output lengths ($o_i$), even when their input sizes ($s_i$) are substantial. This behavior appears related to the LP formulation's relaxation, where fractional $x_{i,t}$ variables reduce the perceived cost of memory blocking by large inputs. Consequently, minimizing output latency dominates scheduling decisions, which can lead to suboptimal memory allocation when large-input requests are scheduled early.

2. \textbf{Swap's balancing effect}: The swap mechanism effectively counteracts this bias by considering both input and output sizes during local exchanges. This dual consideration produces more balanced schedules that better account for memory constraints, particularly in distributions where naive prioritization of short outputs could allow memory-intensive requests to cause disproportionate blocking.

3. \textbf{LP+Swap synergy and performance}: The hybrid \LPSwap{} approach combines LP's global optimization perspective with swap's local refinement capabilities. While pure swap achieves marginally better results in the Mixed distribution (designed to simulate real-world data), \LPSwap{} demonstrates competitive performance across all tested distributions. This consistent adaptability suggests that integrating both techniques offers a promising direction for developing robust schedulers that maintain effectiveness under diverse request patterns.

These empirical patterns provide motivation for further investigating hybrid scheduling strategies, which we pursue with real-world datasets in Section \ref{sec:num}.

\section{Supplementary Materials of Section \ref{sec:num}} \label{append:num}

Figure~EC.6 reports the token-length distributions of the mixed workload used in the main real-data experiments. The figure highlights the heterogeneity created by combining short conversational requests with long-document summarization tasks: most prompts are short, while a smaller number of long-context requests creates a pronounced right tail. This mixed distribution is the regime in which memory-aware scheduling is especially important.

\begin{figure}[!h]
\center
\includegraphics[width=0.8\textwidth]{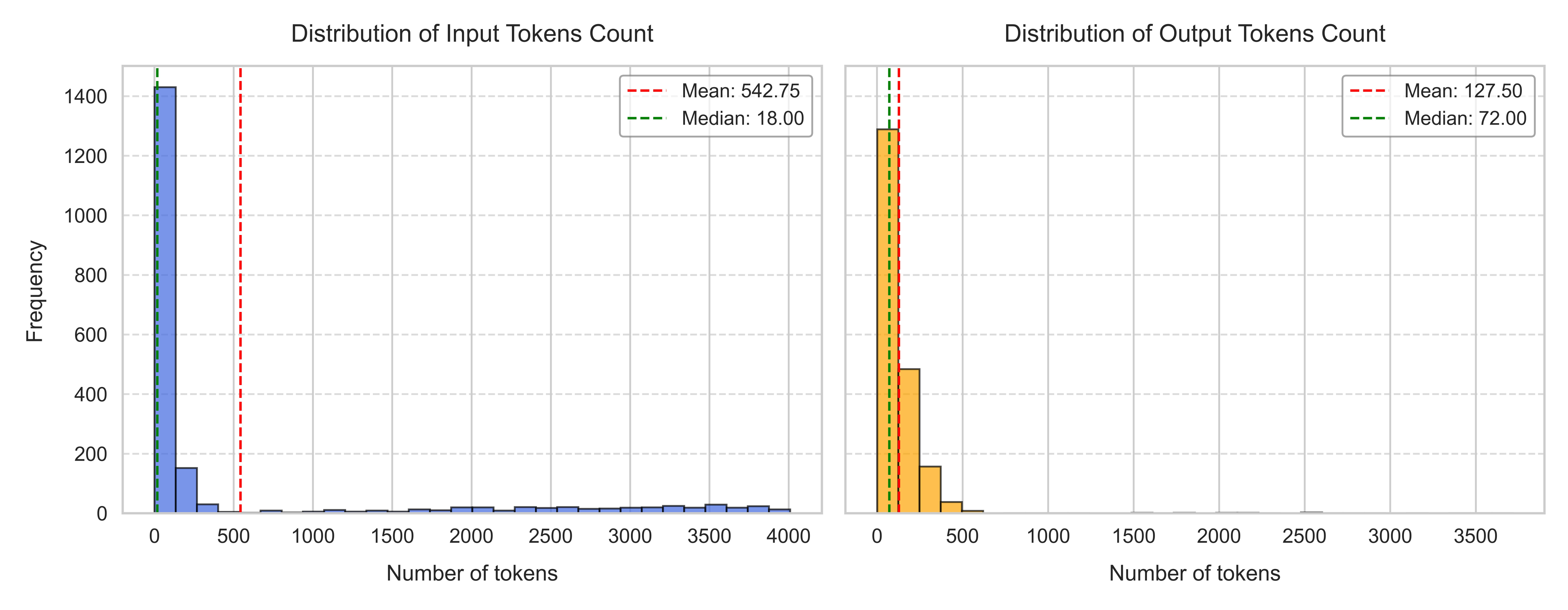}
\caption{Distribution of the number of tokens of input prompt and output response respectively in the mixed data} 
\label{fig:distribution3}
\end{figure}

Figure~EC.7 provides a zoomed-in comparison between \sortF{} with Local Swap Search and \LPSwap{}, whose curves nearly overlap in Figure~3. The refined view shows that the two methods achieve very similar average latency across workload sizes, suggesting that the F-metric-guided local refinement captures most of the performance benefit obtained from LP-guided initialization.

\begin{figure}[!h]
\center
\includegraphics[width=0.75\textwidth]{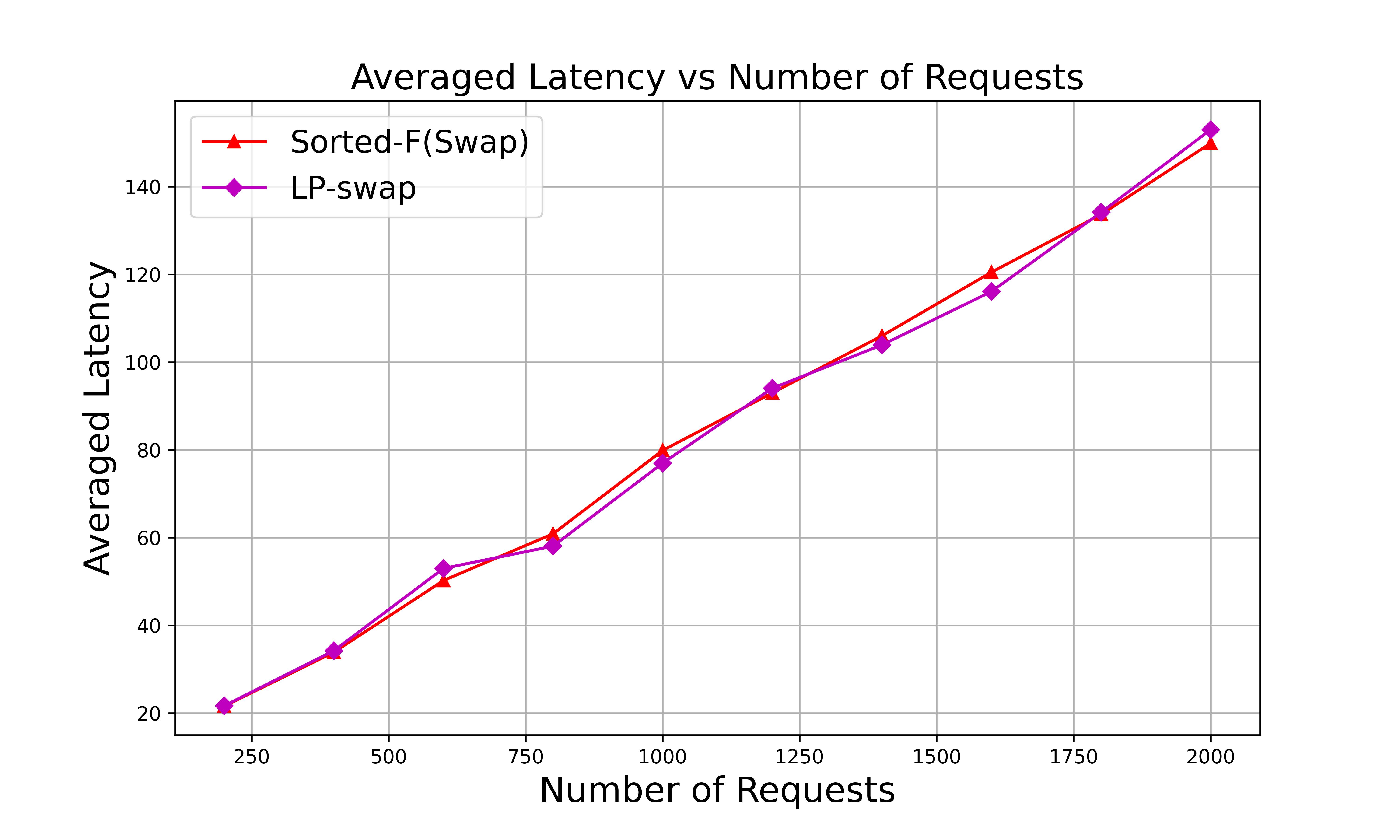}
\caption{Averaged latency between \sortF{} and \LPSwap}
\label{fig:resultappend}
\end{figure}

Figure~EC.8 examines the robustness of the F-metric to explicitly weighting prefill length in the batch-selection objective. The results show that using the compute-matched prefill weight yields performance statistically indistinguishable from \sortF{}, while overweighting prefill shifts the priority rule toward total sequence length and can degrade latency.

\begin{figure}[!h]
\center
\includegraphics[width=0.9\textwidth]{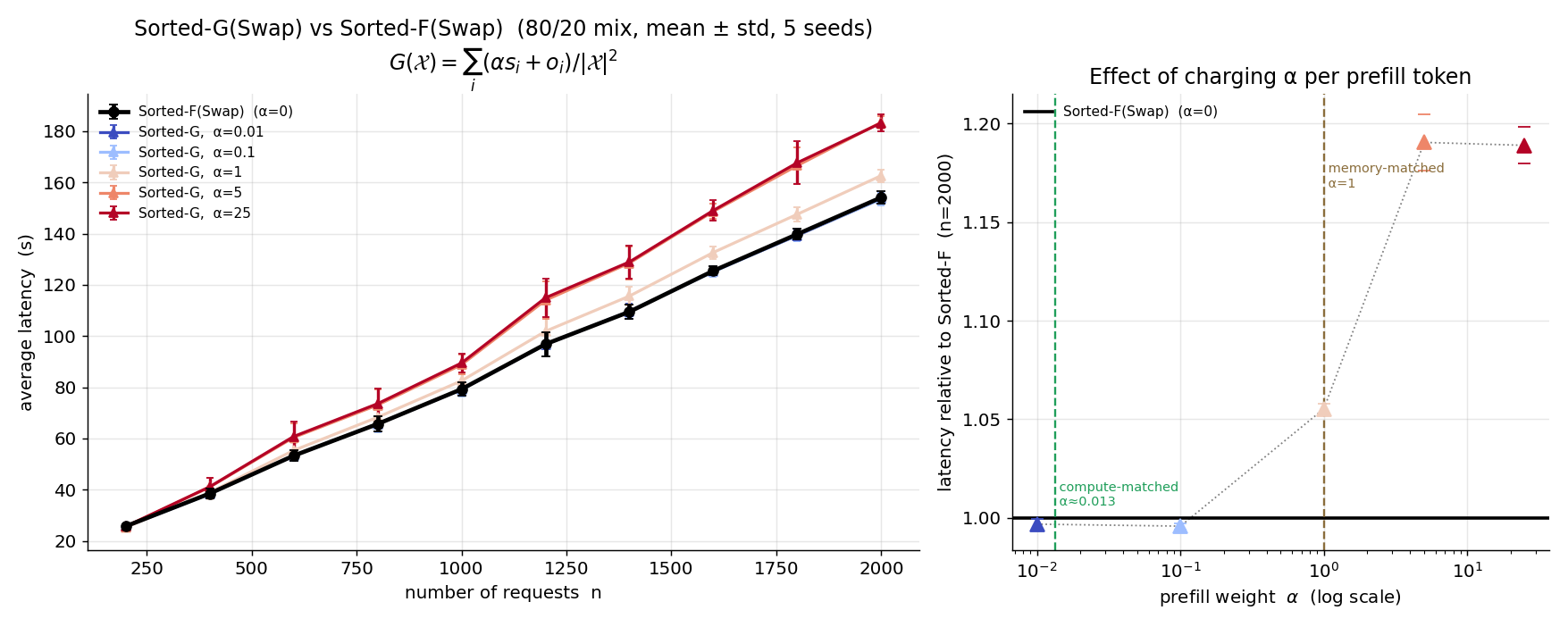}
\caption{Robustness of the $F$-metric to prefill weighting. We replace the decode
length $o_i$ in the Phase-1 selection metric by an effective length
$\alpha s_i + o_i$, giving $G(\mathcal{X})=\sum_i(\alpha s_i+o_i)/|\mathcal{X}|^2$;
$\alpha=0$ recovers Sorted-F. Latency is measured under the Vidur timing model,
which already reflects realistic prefill and decode execution costs.
\emph{Left:} average latency vs.\ workload size for the $80/20$ mixture
(mean $\pm$ std over five seeds). \emph{Right:} latency at $n=2000$ relative to
Sorted-F as a function of $\alpha$ (log scale). The \emph{compute-matched} value
$\alpha\approx 0.013$ is the ratio of per-token prefill to per-token decode time
in our timing model; the \emph{memory-matched} value $\alpha=1$ charges each
prefill token the one unit of memory it consumes in the KV-cache constraint, so
$G$ orders by total sequence length $s_i+o_i$. At the compute-matched $\alpha$,
Sorted-G is statistically indistinguishable from Sorted-F; charging prefill far
above its true compute cost ($\alpha\gtrsim 1$) degrades latency by up to
$\sim$19\%, since the ordering is then driven by prefill size rather than by the
decode length that governs worker occupancy.}
\label{fig:sortg}
\end{figure}

Figure~EC.9 complements the average-latency results by reporting tail-latency metrics under the 80/20 workload mixture. The figure shows that \sortF{} with Local Swap Search improves not only mean latency but also high-percentile latency relative to standard baselines, supporting its relevance for service-level objectives in LLM serving.

\begin{figure}[!h]
\center
\includegraphics[width=0.95\textwidth]{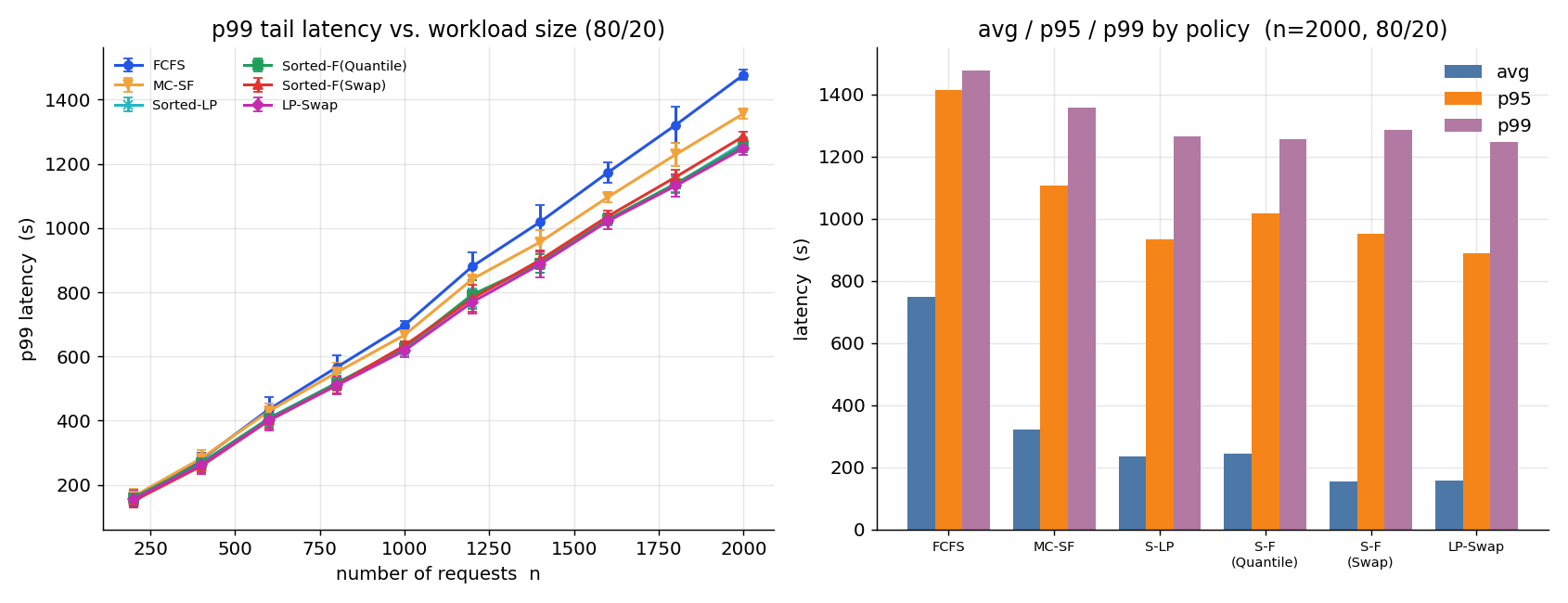}
\caption{Tail-latency comparison under the \(80/20\) workload mixture. Left: p99 latency as a function of workload size. Right: average, p95, and p99 latency by policy at \(n=2000\).}
\label{fig:tail_latency}
\end{figure}

Figure~EC.10 summarizes the Poisson-arrival online experiment. The results show that \sortF{}-Online performs similarly to the online \mcsdf{} baseline when the system load is light, but its advantage grows in overloaded regimes where a backlog forms, consistent with the motivation for the offline/backlogged model.

\begin{figure}[!h]
\center
\includegraphics[width=0.95\textwidth]{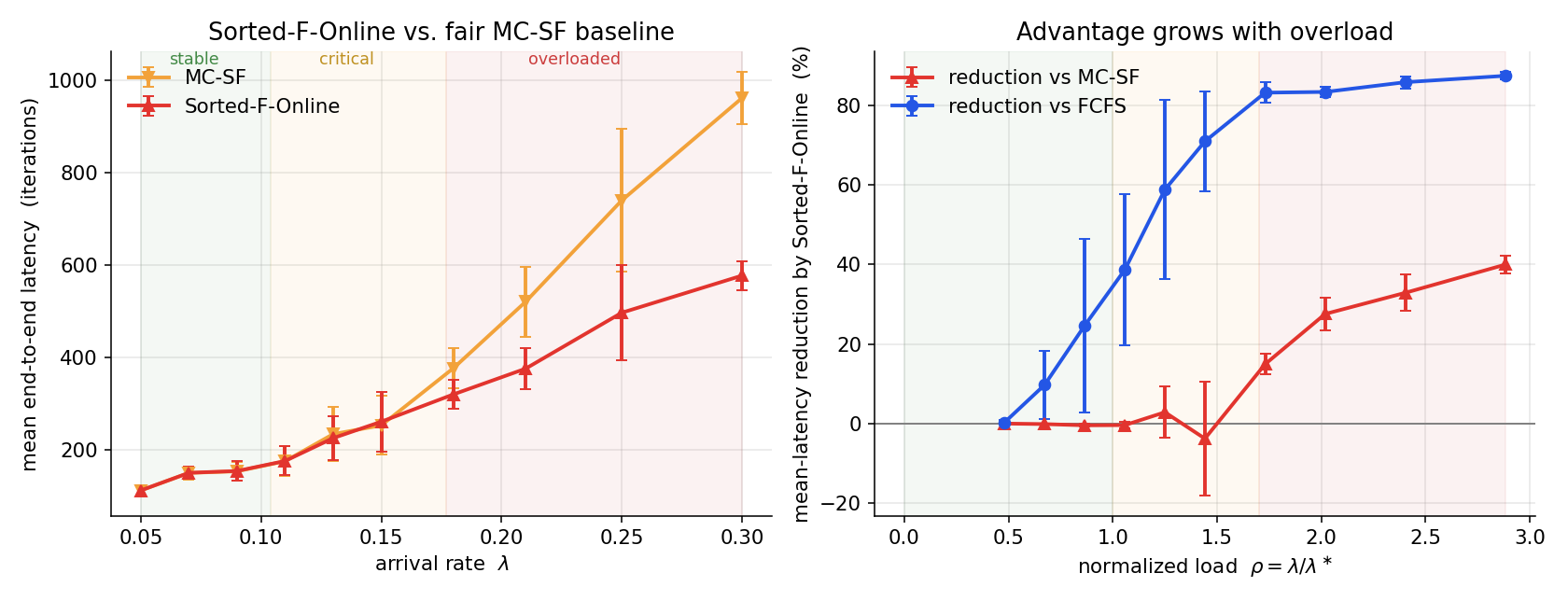}
\caption{Comparison between \sortF{}-Online and the online \mcsdf{} baseline. Left: mean latency under \mcsdf{} and \sortF{}-Online. Right: mean-latency reduction of \sortF{}-Online relative to \mcsdf{} and \texttt{FCFS} as a function of normalized load.}
\label{fig:online_gap}
\end{figure}

\end{document}